\theoremstyle{plain}
\newtheorem{thm}{Theorem}[section]
\newtheorem{prop}[thm]{Proposition}
\newtheorem{lem}[thm]{Lemma}
\newtheorem{cor}[thm]{Corollary}
\theoremstyle{definition}
\newtheorem{defn}[thm]{Definition}
\theoremstyle{remark}
\newtheorem{example}{Example}
  \def\C{{\mathbb{C}}}  \def\E{{\mathbb{E}}}         \def\N{{\mathbb{N}}}    \def\R{{\mathbb{R}}}        \def\Z{{\mathbb{Z}}}
\newcommand\Aut{\operatorname{Aut}}
\newcommand\AP{\operatorname{AP}}
\newcommand\id{\operatorname{id}}
\newcommand\IE{\operatorname{IE}}
\newcommand\Map{{\operatorname{Map}}}
\newcommand\Prob{\operatorname{Prob}}
\newcommand\Sym{\operatorname{Sym}}
\newcommand\Span{\operatorname{span}}
\newcommand\topo{\operatorname{top}}
\def\cc{{\curvearrowright}}
\newcommand{\actson}{\curvearrowright}
\newcommand{\actons}{\curvearrowright}
\begin{document}
\begin{titlepage}
\title{Relative Entropy and the Pinsker Product Formula for Sofic Groups }        
\author{Ben Hayes \thanks{The author gratefully acknowledges support by  NSF Grants DMS-1600802, DMS-1827376.}}
\affil{\emph{University of Virginia}\\
         \emph{Kerchof Hall}\\
         \emph{Charlottesville, VA 22904}\\
         \emph{brh5c@virginia.edu}}
\date{\today}
\maketitle

\begin{abstract} We continue our study of the outer Pinsker factor for probability measure-preserving actions of sofic groups. Using the notion of local and doubly empirical convergence developed by Austin we prove that in many cases the outer Pinsker factor of a product action is the product of the outer Pinsker factors. Our results are parallel to those of Seward for Rokhlin entropy. We use these Pinsker product formulas to show that if $X$ is a compact group, and $G$ is a sofic group with $G\cc X$ by automorphisms, then the outer Pinsker factor of $G\cc (X,m_{X})$ is given as a quotient by a $G$-invariant, closed, normal subgroup of $X$. We use our results to show that if $G$ is sofic and $f\in M_{n}(\Z(G))$ is invertible as a convolution operator $\ell^{2}(G)^{\oplus n}\to \ell^{2}(G)^{\oplus n},$ then the action of $G$ on the Pontryagin dual of $\Z(G)^{\oplus n}/\Z(G)^{\oplus n}f$ has completely positive measure-theoretic entropy with respect to the Haar measure.

\end{abstract}
\maketitle

\textbf{Keywords:} sofic groups, Pinsker factors, completely positive entropy, relative sofic entropy.

\textbf{MSC:} 37A35, 37A15, 22D25
\tableofcontents

\end{titlepage}

\section{Introduction}
	The goal of this paper is to continue the investigation set out in \cite{Me9} on structural properties of the outer Pinsker factor for actions of sofic groups and apply them to the entropy theory of algebraic actions. Recall that for an amenable group $G$ and a probability measure-preserving action $G\cc (X,\mu),$ there is a largest factor $G\cc (Z,\zeta)$ of $G\cc (X,\mu)$ so that $G\cc (Z,\zeta)$ has zero entropy. We call $G\cc (Z,\zeta)$ the \emph{Pinsker factor} of $G\cc (X,\mu).$ We say that $G\cc (X,\mu)$ has completely positive entropy if its Pinsker factor is trivial. For notational purposes, we often write $G\cc (X,\mu)\to G\cc (Y,\nu)$ to mean that $G\cc (Y,\nu)$ is a factor of $G\cc (X,\mu),$ and we will also say that $G\cc (X,\mu)\to G\cc (Y,\nu)$ is an extension in situations where we want to keep track of $X,Y$ and the factor map $X\to Y.$
	
	Since the Pinsker factor is so natural, one expects that it inherits much of the structure that the original action has. For instance, it has been shown, first by \cite{GlasThoWe} if the actions are free and  by \cite{Danilenko} in general, that if $G$ is amenable and $G\cc (X_{j},\mu_{j}),j=1,2$ are two probability measure-preserving actions with Pinsker factors $G\cc (Z_{j},\zeta_{j}),$ then the Pinsker factor of $G\cc (X_{1}\times X_{2},\mu_{1}\otimes \mu_{2})$ is $G\cc (Z_{1}\times Z_{2},\zeta_{1}\otimes \zeta_{2}).$ This shows that products of actions with completely positive entropy have completely positive entropy, but this result has many more applications. For instance, using this it can be shown that if $G$ is an amenable group, if $X$ is a compact group with Haar measure $m_{X},$ and if $G\cc X$ by continuous automorphisms, then there is a closed, $G$-invariant $Y\triangleleft X$ so that the Pinsker factor of $G\cc (X,m_{X})$ is $G\cc (X/Y,m_{X/Y})$ (see \cite{RohlinCPE} and Theorem 8.1 of \cite{ChungLi}). Thus in this case the Pinsker factor inherits the algebraic structure that $G\cc X$ has. Moreover, it is shown in Section 8 of \cite{ChungLi} that this reduces the question as to whether or not an action  of $G$ on a compact, metrizable group $X$ by continuous automorphisms has  completely positive measure-theoretic entropy to whether $G\cc X$ has completely positive topological entropy. As we have already given similar examples in the sofic case of actions with completely positive topological entropy in \cite{Me7}, we wish to carry over the techniques to the sofic world and show that these actions have  completely positive measure-theoretic entropy. Thus in this paper we will give product formulas for Pinsker factors for actions of sofic groups similar to the ones in \cite{GlasThoWe},\cite{Danilenko}.
	
	Entropy for measure-preserving actions of sofic groups was defined in pioneering work of Bowen in \cite{Bow} under the assumption of a generating partition with finite entropy. Work of Kerr-Li in \cite{KLi} removed this assumption and defined topological entropy as well. The class of sofic groups includes all amenable groups, all residually finite groups, all linear groups, all residually sofic groups, all locally sofic groups, and is closed under free products with amalgamation over amenable subgroups (see \cite{Pan},\cite{DKP},\cite{PoppArg}). Thus sofic entropy is a vast generalization of entropy for amenable groups as defined by Kieffer in \cite{Kieff}. Since we will need to refer to it later, we roughly describe the definition of soficity and sofic entropy. Roughly, $G$ is sofic if there is a sequence of functions (not assumed to be homomorphisms) $\sigma_{i}\colon G\to S_{d_{i}}$ which give ``almost free almost actions.'' By ``almost action" one just means that for each $g,h\in G$ the set of points $1\leq j\leq d_{i}$ for which the action hypothesis $\sigma_{i}(gh)(j)=\sigma_{i}(g)\sigma_{i}(h)(j)$ fails has very small size as $i\to\infty,$ and by ``almost free" one means that for all $g\in G\setminus\{e\}$ and ``most" $1\leq j\leq d_{i}$ we have $\sigma_{i}(g)(j)\ne j.$ Given a probability measure-preserving action $G\cc (X,\mu),$ the sofic entropy of $G\cc (X,\mu)$ (with respect to $(\sigma_{i})_{i}$) measures the exponential growth rate as $i\to\infty$ of ``how many" finitary approximations $\phi\colon\{1,\dots,d_{i}\}\to X$ of $G\cc (X,\mu)$ there are which are compatible with this sofic approximation. We call such approximations ``microstates.'' Analogous to the definition for amenable groups, one can define the Pinsker factor for actions of sofic groups (we remark that the Pinsker factor depends on the sofic approximation).
	
	Though this definition of entropy ends up being satisfactory for many purposes, there are properties of entropy for actions of amenable groups (e.g. decrease of entropy under factor maps) which necessarily fail for actions of sofic groups. There are examples due to Ornstein and Weiss which show that, under any reasonable definition of entropy for actions of nonamenable groups, entropy will increase under certain factor maps. Thus there could be factors of the Pinsker factor which have positive entropy. One can ``fix'' this by considering \emph{entropy in the presence}. Implicit in work of Kerr in \cite{KerrPartition}, entropy in the presence measures for a given factor $G\cc (Y,\nu)$ of $G\cc (X,\mu)$ ``how many'' finitary approximations of $G\cc (Y,\nu)$ there are which ``lift'' to finitary approximations of $G\cc (X,\mu).$ If $G\cc (Z,\zeta)$ is a factor of $G\cc (Y,\nu),$ then the entropy of $G\cc (Z,\zeta)$ in the presence of $G\cc (X,\mu)$ is at most the entropy of $G\cc (Y,\nu)$ in the presence of $G\cc (X,\mu)$. Entropy in the presence leads us to define the \emph{outer Pinsker factor} which is the largest factor $G\cc (Y,\nu)$ of $G\cc (X,\mu)$ which has zero entropy in the presence of $G\cc (X,\mu)$ with respect to $(\sigma_{i})_{i}.$ We remark again that the definition of the outer Pinsker factor depends on the choice of a sofic approximation. In our opinion, the outer Pinsker factor is a more natural object and it is this version of the Pinsker factor we give a product formula for. We do not know of general conditions under which  a product formula holds for the (non-outer) Pinsker factor. We remark that there are counterexamples for the Pinsker product formula (for either outer or non-outer Pinsker factors), but none of the known counterexamples are entirely satisfactory. All currently known counterexamples to a Pinsker product formula involve something akin to considering actions $G\cc (X,\mu),G \cc (Y,\nu),$ so that $G\cc (X,\mu),G \actson (Y,\nu)$ have microstates with respect to $(\sigma_{i})_{i},$ but $G \actson (X\times Y,\mu\otimes \nu)$ does not admit microstates with respect to $(\sigma_{i})_{i}.$ It would be interesting to find an example where $G\cc (X\times Y,\mu\otimes \nu)$ has microstates with respect to $(\sigma_{i})_{i},$ but for which the Pinsker factor of $G\actson (X\times Y,\mu\otimes \nu)$ is not the product of the  Pinsker factors of $G \actson (X,\mu),G \actson (Y,\nu),$ or which has the analogous property for outer Pinsker factors.
	
	Even once we consider the correct version of the Pinsker factor, there are still delicate issues that occur in the investigation of product actions for sofic entropy. As mentioned in the preceding paragraph, even if one assumes that we have microstates $\phi_{j}\colon \{1,\dots,d_{i}\}\to X_{j},j=1,2$ for $G\cc (X_{j},\mu_{j}),j=1,2,$ there is  still no way to ensure that $G\cc (X_{1}\times X_{2},\mu_{1}\otimes \mu_{2})$ has microstates. Because of this, even if $G\cc (X_{j},\mu_{j}),j=1,2$ have completely positive entropy, it may still be the case that $G\cc (X_{1}\times X_{2},\mu_{1}\otimes \mu_{2})$ has entropy $-\infty.$  To deal with this, we use the notion of local and doubly empirical convergence developed by Austin in \cite{AustinAdd}. We briefly describe local and doubly empirical convergence. Suppose that $G\cc (X,\mu)$ is a probability measure-preserving action where $X$ is compact, $\mu$ is a completed Borel probability measure, and $G\cc X$ by homeomorphisms. Given  a sequence $\mu_{i}\in \Prob(X^{d_{i}}),$ the assertion that $\mu_{i}$ locally and doubly empirically converges to $\mu$ is  a combination of three assumptions. The first is that $\mu_{i}$  is mostly supported, asymptotically as $i\to\infty,$ on the space of microstates. Secondly, we require that $\mu_{i}$ has the property that for every $f\in C(X)$ and for ``most'' $j$ in $\{1,\dots,d_{i}\}$ we have
	\[\int_{X^{d_{i}}}f(x(j))\,d\mu_{i}(x)\approx \int_{X}f\,d\mu.\]
Lastly, we require that $\mu_{i}\otimes \mu_{i}$ is also almost supported on the space of microstates for $X\times X.$ One of the main results of \cite{AustinAdd} is that the existence of a sequence of measures $\mu_{i}$ with $\mu_{i}\to^{lde}\mu$ implies that there is a way to produce (at random) a microstate for $G\cc (X\times Y,\mu\otimes \nu)$ for any other action $G\cc (Y,\nu)$ (assuming $G\cc (Y,\nu)$ has microstates to begin with). We thus define $G\cc (X,\mu)$ to be \emph{strongly sofic} (with respect to $(\sigma_{i})_{i}$) if there is some compact model for $G\cc (X,\mu)$  so that there is a sequence $\mu_{i}\in \Prob(X^{d_{i}})$ which locally and doubly empirically converges to $\mu.$ By \cite[Corollary 5.18]{AustinAdd}, this does not depend upon the choice of compact model for $G\cc (X,\mu).$ Strong soficity ends up being a crucial property which allows us to prove a product formula for Pinsker factors.

\begin{thm}\label{T:PinskerProdIntro} Let $G$ be a countable, discrete, sofic group with sofic approximation $\sigma_{i}\colon G\to S_{d_{i}}.$ Let $(X_{j},\mu_{j}),j=1,2$ be Lebesgue probability spaces and $G\cc (X_{j},\mu_{j}),j=1,2$ probability measure-preserving actions which are strongly sofic with respect to $(\sigma_{i})_{i}.$ Let $(Z_{j},\zeta_{j})$ be the outer Pinsker factor of $G\cc (X_{j},\mu_{j}),j=1,2$ with respect to $(\sigma_{i})_{i}.$ Then the outer Pinsker factor of $G\cc (X_{1}\times X_{2},\mu_{1}\otimes \mu_{2})$ with respect to $(\sigma_{i})_{i}$ is $G\cc (Z_{1}\times Z_{2},\zeta_{1}\otimes \zeta_{2}).$
\end{thm}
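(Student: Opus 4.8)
My plan is to show that the outer Pinsker $\sigma$-algebra $\cP$ of $G\cc(X_1\times X_2,\mu_1\otimes\mu_2)$ (with respect to $(\sigma_i)_i$) is precisely $\cZ_1\otimes\cZ_2$, where $\cZ_j\subseteq\cX_j$ denotes the $G$-invariant sub-$\sigma$-algebra underlying the outer Pinsker factor $(Z_j,\zeta_j)$, and I would prove the two inclusions separately. Write $h(\cF:Y)$ for the entropy of a factor $\cF$ in the presence of an action $Y$, and $h(\cF:Y\mid\cG)$ for its relative version over a subfactor $\cG\subseteq\cF$. The basic properties I would use are: monotonicity of $h(\,\cdot:Y)$ under factor maps (recalled in the introduction); the subadditivity $h(\cF\vee\cG:Y)\le h(\cF:Y)+h(\cG:Y)$; and, when $Y$ is \emph{strongly sofic}, the exact addition formula $h(\cF:Y)=h(\cG:Y)+h(\cF:Y\mid\cG)$. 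The last two belong to the relative sofic entropy calculus that I would develop before this theorem; I would also use that factors and finite products of strongly sofic actions are strongly sofic (via Austin's doubly empirical convergence) and the identity $(\cZ_1\otimes\cX_2)\cap(\cX_1\otimes\cZ_2)=\cZ_1\otimes\cZ_2$ of $\sigma$-algebras, which holds modulo $\mu_1\otimes\mu_2$ since the spaces are standard.

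The inclusion $\cZ_1\otimes\cZ_2\subseteq\cP$ reduces to the estimate $h(\cZ_1\otimes\cZ_2:X_1\times X_2)=0$, and this ``soft'' direction uses neither strong soficity nor the addition formula. Any microstate $\sigma\colon\{1,\dots,d_i\}\to X_1\times X_2$ for the product restricts in each coordinate to a microstate for $X_1$ and for $X_2$ separately, so the $(\cZ_1\otimes\cZ_2)$-name of $\sigma$ is determined by the $\cZ_1$-name of its first coordinate together with the $\cZ_2$-name of its second. Because $h(\cZ_j:X_j)=0$, the number of $\cZ_j$-names carried by microstates for $X_j$ grows subexponentially in $d_i$ for $j=1,2$; hence so does the number of $(\cZ_1\otimes\cZ_2)$-names carried by microstates for the product, and arranging this count to respect the infimum over refining finite observables in the definition of entropy in the presence yields $h(\cZ_1\otimes\cZ_2:X_1\times X_2)=0$. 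As $\cP$ is by definition the largest $G$-invariant factor with vanishing entropy in the presence of $X_1\times X_2$, this gives $\cZ_1\otimes\cZ_2\subseteq\cP$.

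For the reverse inclusion $\cP\subseteq\cZ_1\otimes\cZ_2$, by the $\sigma$-algebra identity above and the symmetry of the hypotheses in the two coordinates it is enough to prove $\cP\subseteq\cX_1\otimes\cZ_2$. The key input I would establish is that the relative outer Pinsker $\sigma$-algebra of $G\cc(X_1\times X_2,\mu_1\otimes\mu_2)$ over the first-coordinate algebra $\cX_1$ is exactly $\cX_1\otimes\cZ_2$: that is, the largest $G$-invariant $\cF$ with $\cX_1\subseteq\cF\subseteq\cX_1\otimes\cX_2$ and $h(\cF:X_1\times X_2\mid\cX_1)=0$ equals $\cX_1\otimes\cZ_2$. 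Granting this, consider $\cP\vee\cX_1$. Since $h(\cP:X_1\times X_2)=0$, subadditivity gives $h(\cP\vee\cX_1:X_1\times X_2)\le h(\cX_1:X_1\times X_2)$; on the other hand, $X_1\times X_2$ is strongly sofic, so the addition formula over the subfactor $\cX_1\subseteq\cP\vee\cX_1$ yields $h(\cP\vee\cX_1:X_1\times X_2)=h(\cX_1:X_1\times X_2)+h(\cP\vee\cX_1:X_1\times X_2\mid\cX_1)$, and these two facts together force $h(\cP\vee\cX_1:X_1\times X_2\mid\cX_1)=0$. By the key input, $\cP\subseteq\cP\vee\cX_1\subseteq\cX_1\otimes\cZ_2$; by symmetry $\cP\subseteq\cZ_1\otimes\cX_2$, hence $\cP\subseteq(\cZ_1\otimes\cX_2)\cap(\cX_1\otimes\cZ_2)=\cZ_1\otimes\cZ_2$, which is the claim.

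Two points carry the real weight, both inside the second inclusion. The first is the superadditive half of the addition formula, $h(\cF:Y)\ge h(\cG:Y)+h(\cF:Y\mid\cG)$ for strongly sofic $Y$ — used above to get $h(\cP\vee\cX_1:X_1\times X_2\mid\cX_1)\le 0$. This is exactly where strong soficity is essential: one must manufacture enough microstates for $\cF$ out of microstates for the subfactor $\cG$ together with the relative data, and this is done through Austin's random coupling construction for locally and doubly empirically convergent sequences. The second is the identification of the relative outer Pinsker $\sigma$-algebra of the product over $\cX_1$ with $\cX_1\otimes\cZ_2$; I expect this to require disintegrating $G$-invariant $\sigma$-algebras over $\cX_1$, recognizing the relative action over $\cX_1$ as a copy of $G\cc(X_2,\mu_2)$, and then transporting the one-variable outer Pinsker theory fiberwise — all within the strongly sofic framework. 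I anticipate this second point to be the main obstacle; the first inclusion, by contrast, is essentially formal.
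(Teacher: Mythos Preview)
Your overall architecture --- reduce to the ``half--product'' statement that the relative outer Pinsker algebra of $X_1\times X_2$ over $\cX_1$ equals $\cX_1\otimes\cZ_2$, then intersect with the symmetric statement --- matches the paper exactly (this is Theorem~\ref{T:halfpinskerprod} feeding into Corollary~\ref{C:PinskerProd}). The easy inclusion is also fine, and in fact the paper dispatches it in one line via Proposition~\ref{P:Pinskermonotone}(\ref{I:Pinksermonotoneextension}): since $X_1\times X_2$ extends $X_j$, one has $\cZ_j=\Pi_{(\sigma_i)_i}(\mu_j)\subseteq\Pi_{(\sigma_i)_i}(\mu_1\otimes\mu_2)=\cP$, and $\cP$ is a $\sigma$-algebra.

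There are, however, two points where your plan diverges from the paper and one of them is a genuine gap. Your route from $h(\cP:X_1\times X_2)=0$ to $h(\cP\vee\cX_1:X_1\times X_2\mid\cX_1)\le 0$ goes through an exact addition formula $h(\cF:Y)=h(\cG:Y)+h(\cF:Y\mid\cG)$ for strongly sofic $Y$. The paper never proves this, and the superadditive half is dubious: the relative term is defined as a \emph{supremum} over $\cG$-microstates of the number of lifts, so there is no a~priori reason that (total count for $\cF$) dominates (count for $\cG$)$\times$(\emph{maximal} fiber count); even granting it, the argument breaks when $h(\cX_1:X_1\times X_2)=+\infty$. The paper bypasses this entirely: by Proposition~\ref{P:Pinskermonotone}(\ref{I:Pinksermonotoneinclusion}), enlarging the conditioning $\sigma$-algebra only enlarges the relative Pinsker algebra, so $\cP=\Pi_{(\sigma_i)_i}(\mu_1\otimes\mu_2\mid\mbox{trivial})\subseteq\Pi_{(\sigma_i)_i}(\mu_1\otimes\mu_2\mid\cX_1)$ immediately, with no addition formula and no finiteness caveat.

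On the ``key input'' itself, your fiberwise/disintegration sketch is not how the paper proceeds. The paper first proves the half--product formula under the extra hypothesis that $\Aut(G\cc(Y,\nu))$ acts ergodically (Lemma~\ref{L:cpeextension}): the conditional expectation onto the relative Pinsker commutes with $1\otimes L^\infty(Y)$ and with $1\otimes U_\alpha$ for $\alpha\in\Aut(G\cc Y)$, and a commutant/tensor argument forces it to have the form $P\otimes 1$. The ergodicity hypothesis is then removed (Lemma~\ref{L:ultraproductrick} and Theorem~\ref{T:halfpinskerprod}) by passing to the strongly sofic extension $Y^{\Z}$, on which the Bernoulli $\Z$-shift supplies an ergodic commuting automorphism. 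Strong soficity enters not through an addition formula but through Proposition~\ref{P:wcindeplift}, which says that tensoring with a strongly sofic action leaves \emph{relative} entropy unchanged; this is what pins $\widehat{\cU}$ back inside $\cZ_1$ in Lemma~\ref{L:cpeextension}.
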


We remark that it follows from \cite[Proposition 8.4]{AustinAdd} that $G\actson (X,\mu)$ is strongly sofic if and only if every microstate $\phi\colon \{1,\dots,d_{i}\}\to Y$ for another action $G\actson (Y,\nu)$ lifts to a microstate $\psi\colon \{1,\dots,d_{i}\}\to X\times Y.$ In this sense, the assumption of strong soficity of $G\actson (X,\mu)$ is natural for the existence of a Pinsker product formula (for example, it is currently the only way to guarantee that $G\actson (X\times Y,\mu\otimes \nu)$ has positive entropy if one of $G\actson (X,\mu),G\actson (Y,\nu)$ do).

We in fact prove something more general than Theorem \ref{T:PinskerProdIntro}, namely we prove a product formula for ``relative outer Pinsker factors''. We refer the reader to Corollary \ref{C:PinskerProd} for the precise statement.  One can also consider Rokhlin entropy as investigated by Seward in \cite{SewardKrieger} and in a similar manner define the outer Rokhlin Pinsker factor (see \cite{SewardPinskerProd} for the definition). Seward has shown in \cite{SewardPinskerProd} that if $G\cc (X_{j},\mu_{j})$ are two free, probability measure-preserving actions, then the outer Rokhlin Pinsker factor of $G\cc (X_{1}\times X_{2},\mu_{1}\otimes \mu_{2})$ is the product of the outer Rokhlin Pinsker factors for $G\cc (X_{1},\mu_{1}),G\cc (X_{2},\mu_{2}),$ assuming that each $G\cc (X_{j},\mu_{j}),j=1,2$ is weakly contained in a Bernoulli shift action.  Motivated by these results, we prove the following permanence properties of strong soficity.

\begin{thm}\label{T:strongsoficpermanenceintro}  Let $G$ be a countable, discrete, sofic group with sofic approximation $\sigma_{i}\colon G\to S_{d_{i}}.$
\begin{enumerate}[(i)]
\item If $G\cc (X,\mu)$ is a probability measure-preserving action with $(X,\mu)$ Lebesgue, and if $G\cc (X,\mu)$ is an inverse limit of actions which are strongly sofic with respect to $(\sigma_{i})_{i},$ then $G\actson (X,\mu)$ is strongly sofic with respect to $(\sigma_{i})_{i}.$
\item Suppose that $G\cc (Y,\nu),G\cc (X,\mu)$ are two probability measure-preserving actions with $(X,\mu),(Y,\nu)$ Lebesgue, and that $G\cc (Y,\nu)$ is weakly contained in $G\cc (X,\mu).$ If $G\cc (X,\mu)$ is strongly sofic with respect to $(\sigma_{i})_{i}$, then so is $G\cc (Y,\nu).$
\end{enumerate}

\end{thm}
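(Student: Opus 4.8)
The plan is to work directly with the characterization of strong soficity recalled after Theorem \ref{T:PinskerProdIntro}: $G\actson(X,\mu)$ is strongly sofic with respect to $(\sigma_i)_i$ if and only if every microstate for every other action $G\actson(Y,\nu)$ (which has microstates) lifts to a microstate for $G\actson(X\times Y,\mu\otimes\nu)$. Equivalently, fixing compact models, one has a sequence $\mu_i\in\Prob(X^{d_i})$ that locally and doubly empirically converges to $\mu$ in the sense of Austin \cite{AustinAdd}. I would phrase both parts in whichever of these two forms makes the bookkeeping cleanest; the lifting formulation seems best for part (ii) and the doubly-empirical-measure formulation best for part (i).

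For part (i): fix a compact model $X$ for the inverse limit, presented as an inverse limit of compact models $X_n$ with $G$-equivariant continuous surjections $\pi_n\colon X\to X_n$ (so $C(X) = \overline{\bigcup_n \pi_n^*C(X_n)}$), and let $\mu_n = (\pi_n)_*\mu$. By hypothesis each $G\actson(X_n,\mu_n)$ is strongly sofic, so for each $n$ there is a sequence $(\mu_i^{(n)})_i$ in $\Prob(X_n^{d_i})$ locally and doubly empirically converging to $\mu_n$. The idea is a diagonal argument: local and doubly empirical convergence is tested against finitely many functions in $C(X_n)$ to within finitely many tolerances, so for a suitable slowly-increasing function $i\mapsto n(i)$ one can lift each $\mu_i^{(n(i))}$ through $\pi_{n(i)}$ to a measure on $X^{d_i}$ — for instance by pushing forward under a (measurable) section or, more robustly, by taking a relatively independent joining over $X_{n(i)}^{d_i}$ using the disintegration of $\mu$ over $\mu_{n(i)}$ coordinatewise — and check that the resulting sequence on $X^{d_i}$ locally and doubly empirically converges to $\mu$. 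One must verify (a) the microstate-concentration condition, which follows because microstates for $X$ pull back/push forward along microstates for $X_{n(i)}$ as $n(i)\to\infty$ and the disintegration is $\mu$-a.e.\ concentrated correctly, and (b) the empirical-distribution and double-empirical conditions against an arbitrary $f\in C(X)$, which by density reduce to $f\in\pi_n^*C(X_n)$ for fixed $n$, at which point for $i$ large enough that $n(i)\ge n$ the estimate is inherited verbatim from the $X_n$-level convergence. The main subtlety here is making the coordinatewise relatively independent joining genuinely produce a probability measure on $X^{d_i}$ that is still doubly empirically well-behaved — the ``doubly'' part requires that $\mu_i^{(n(i))}\otimes\mu_i^{(n(i))}$ be almost supported on microstates for $X_n\times X_n$ and that the lift respects products, which again follows from density plus the diagonal choice of $n(i)$.

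For part (ii): here I would use the lifting characterization. Let $G\actson(Z,\lambda)$ be an arbitrary action with a microstate $\phi\colon\{1,\dots,d_i\}\to Z$; we must lift it to a microstate for $G\actson(Y\times Z,\nu\otimes\lambda)$. Since $G\actson(Y,\nu)$ is weakly contained in $G\actson(X,\mu)$, weak containment passes to products — $G\actson(Y\times Z,\nu\otimes\lambda)$ is weakly contained in $G\actson(X\times Z,\mu\otimes\lambda)$ — and weak containment does not decrease the set of (approximate) microstates: any microstate for the weakly-containing action yields, after a small perturbation, a microstate for the weakly-contained one. Now $\phi$ is in particular a microstate for $G\actson(Z,\lambda)$, so by strong soficity of $G\actson(X,\mu)$ it lifts to a microstate $\psi\colon\{1,\dots,d_i\}\to X\times Z$ for $G\actson(X\times Z,\mu\otimes\lambda)$; by weak containment of $G\actson(Y\times Z,\nu\otimes\lambda)$ in $G\actson(X\times Z,\mu\otimes\lambda)$ we may adjust $\psi$ to a microstate $\psi'\colon\{1,\dots,d_i\}\to Y\times Z$ for $G\actson(Y\times Z,\nu\otimes\lambda)$, and the composition with the projection $Y\times Z\to Z$ differs from $\phi$ by an arbitrarily small amount in the relevant pseudmetric — so after a further harmless adjustment $\psi'$ is an honest lift of $\phi$. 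Hence every microstate for every auxiliary action lifts through $Y$, which is exactly strong soficity of $G\actson(Y,\nu)$.

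I expect the genuine obstacle to be in part (i): the naive pushforward of a doubly empirically convergent sequence along a factor map need not be doubly empirically convergent for the \emph{larger} space, so the lift must be constructed, and the construction (the coordinatewise relatively independent joining) has to be checked to preserve the ``almost supported on microstates for $X\times X$'' property. Assembling the diagonal $n(i)$ so that this holds against all of $C(X)$ simultaneously — rather than just against functions from a fixed level — is where the care is needed; everything in part (ii) is essentially formal once one has the lifting characterization and the stability of weak containment under products.
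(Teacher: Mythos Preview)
Your proposal is viable, but it takes a different route from the paper and thereby misses a unifying observation that dispatches both parts at once.

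The paper first proves a single structural fact (Proposition~\ref{P:closure of strongly sofic}): on any fixed compact model $X$, the set of $\mu\in\Prob_G(X)$ for which $G\actson(X,\mu)$ is strongly sofic is weak$^*$-closed. This follows from a functional-analytic reformulation of local and doubly empirical convergence (Proposition~\ref{P:alternateformulation}) by a standard diagonal argument over a countable dense set in $C(X)$. Both halves of the theorem then reduce to this closure lemma. For (i), one embeds all the $X_n$ and the inverse limit in the product $\widetilde{X}=\prod_n X_n$ (padding with a $G$-fixed point in the tail coordinates) so that copies of the $\mu_n$ converge weak$^*$ to a copy of the inverse-limit measure. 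For (ii), one takes the symbolic model $Y=\{0,1\}^{\N\times G}$ and uses weak containment to build Borel, $G$-equivariant maps $\phi^{(n)}\colon X\to Y$ with $(\phi^{(n)})_*\mu\to\nu$ weak$^*$; each $(Y,(\phi^{(n)})_*\mu)$ is then a genuine factor of $(X,\mu)$, hence strongly sofic, and one invokes closure.

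Your direct approach for (i)---lifting $\mu_i^{(n(i))}$ to $X^{d_i}$ via the coordinatewise disintegration and checking lde-convergence by hand---does work, and you have correctly located the delicate point, but the verifications you flag are exactly what the paper's route avoids. Your approach for (ii) via the lifting characterization is also sound in outline, but the ``adjustment'' step is underspecified: the reason the $Z$-coordinate is preserved is not some general principle about weak containment and microstates, but the concrete fact that the approximants can be taken of the form $\phi^{(n)}\times\id_Z\colon X\times Z\to Y\times Z$, and you should say this (and address that $\phi^{(n)}$ is only Borel, so one must either work in the observable formulation or approximate). What your approaches buy is a hands-on picture of how the microstates move; what the paper's route buys is a single closure lemma after which both statements are two-line corollaries.
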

Theorems \ref{T:strongsoficpermanenceintro} and \ref{T:PinskerProdIntro} give us a Pinsker product formula for actions weakly contained in Bernoulli shift actions, and so our results are parallel to those of Seward in \cite{SewardPinskerProd}. To prove Theorem \ref{T:strongsoficpermanenceintro}, we rephrase local and doubly empirical convergence in functional analytic terms in a way which avoids specifying a dynamically generating pseudometric.

Following arguments of \cite{Berg},\cite{ChungLi} we can prove that if $X$ is a compact, metrizable group,  if $G\actson X$ by continuous automorphisms, and if $G\actson (X,m_{X})$ is strongly sofic with respect to a fixed sofic approximation of $G,$ then the outer Pinsker factor is given by the action on a quotient of $X$ by a $G$-invariant, closed, normal subgroup.

\begin{thm}\label{T:algintro} Let $G$ be a countable, discrete, sofic group with sofic approximation $\sigma_{i}\colon G\to S_{d_{i}}.$ Let $X$ be a compact, metrizable group with $G\actson X$ by automorphisms. If $G\actson (X,m_{X})$ is strongly sofic with respect to $(\sigma_{i})_{i}$, then there exists a closed, normal, $G$-invariant subgroup $Y\subseteq X$ so that the outer Pinsker factor (with respect to $(\sigma_{i})_{i}$) is given by the map $X\to X/Y.$
\end{thm}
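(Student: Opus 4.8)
The plan is to adapt the argument of Berg and Chung--Li (\cite{Berg,ChungLi}), with Theorem~\ref{T:PinskerProdIntro} supplying the essential new ingredient. Write $\alpha\colon G\to\Aut(X)$ for the action and let $\Pi\subseteq\cB_{X}$ be the ($m_{X}$-completed) $G$-invariant sub-$\sigma$-algebra whose associated factor of $G\cc(X,m_{X})$ is the outer Pinsker factor. Since the outer Pinsker factor is the maximal factor with zero entropy in the presence of $G\cc(X,m_{X})$, and entropy in the presence is a conjugacy invariant, $\Pi$ is canonical, hence preserved by every $G$-equivariant measure-preserving automorphism of the system $G\cc(X,m_{X})$. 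As $G\cc(X,m_{X})$ is strongly sofic with respect to $(\sigma_{i})_{i}$, Theorem~\ref{T:PinskerProdIntro} applied with $(X_{1},\mu_{1})=(X_{2},\mu_{2})=(X,m_{X})$ shows that the outer Pinsker $\sigma$-algebra of $G\cc(X\times X,m_{X}\otimes m_{X})$ is exactly $\Pi\otimes\Pi$.

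Next I would use this to force $\Pi$ to be translation invariant. Because $G$ acts by group automorphisms, the maps $\Theta_{1}(x,y)=(x,xy)$ and $\Theta_{2}(x,y)=(x,yx)$ are homeomorphisms of $X\times X$ (with continuous inverses $(x,y)\mapsto(x,x^{-1}y)$ and $(x,y)\mapsto(x,yx^{-1})$), they preserve $m_{X}\otimes m_{X}$ by bi-invariance of Haar measure together with Fubini, and they are equivariant for the diagonal $G$-action, since $\Theta_{1}(\alpha_{g}x,\alpha_{g}y)=(\alpha_{g}x,(\alpha_{g}x)(\alpha_{g}y))=g\cdot\Theta_{1}(x,y)$ and likewise for $\Theta_{2}$. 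Hence $\Theta_{1}$ and $\Theta_{2}$ preserve $\Pi\otimes\Pi$. Testing against functions of the form $(x,y)\mapsto\mathbf{1}_{B}(y)$ with $B\in\Pi$ and slicing with Fubini (using that $\Pi$-measurable functions form an $L^{2}$-closed subspace) shows that for almost every $a\in X$ the sets $aB$ and $Ba$ lie in $\Pi$; running over a countable family generating $\Pi$ produces a single conull set of such $a$. Since $\{a\in X:a\Pi=\Pi\}$ and $\{a\in X:\Pi a=\Pi\}$ are subgroups of $X$ that are closed (by $L^{1}$-continuity of translation), and a conull closed subgroup of a compact group is the whole group, $\Pi$ is invariant under all left and all right translations of $X$.

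Finally I would identify the subgroup. By the classical description of translation-invariant completed sub-$\sigma$-algebras of a compact metrizable group, left-translation invariance gives $\Pi=\pi_{H}^{-1}(\cB_{X/H})$ for a closed subgroup $H$, where $\pi_{H}\colon X\to X/H$ is the quotient onto left cosets, and right-translation invariance gives the analogous description with a closed subgroup $K$ and right cosets. Comparing the two descriptions (for instance $H\in\Pi$ forces $KH=H$, hence $K\subseteq H$, and symmetrically $H\subseteq K$) yields $H=K=:Y$, and since then every left coset of $Y$ is a union of right cosets of $Y$ we get $x^{-1}Yx\subseteq Y$ for all $x\in X$, so $Y\triangleleft X$. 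Moreover, $G$-invariance of $\Pi$ together with $\alpha_{g}(\pi_{Y}^{-1}(\cB_{X/Y}))=\pi_{\alpha_{g}(Y)}^{-1}(\cB_{X/\alpha_{g}(Y)})$ and the injectivity of $Y\mapsto\Pi$ on closed subgroups forces $\alpha_{g}(Y)=Y$ for every $g\in G$. Thus $Y$ is closed, normal and $G$-invariant; $X\to X/Y$ is a continuous surjective $G$-equivariant group homomorphism pushing $m_{X}$ forward to $m_{X/Y}$, and its associated sub-$\sigma$-algebra is $\Pi$, so it realizes the outer Pinsker factor.

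Granting Theorem~\ref{T:PinskerProdIntro}, the remaining work is essentially soft; the two points requiring care are that the outer Pinsker $\sigma$-algebra is genuinely canonical (so that it is fixed by $\Theta_{1}$ and $\Theta_{2}$), which rests on conjugacy-invariance of entropy in the presence, and the upgrade from ``almost every $a$'' to ``every $a$'' in the second step, handled by the closed-subgroup argument. No analytic input beyond the product formula is needed.
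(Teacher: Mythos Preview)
Your argument is correct and follows essentially the same route as the paper's proof (Corollary~\ref{C:algebraic}): use the Pinsker product formula on $X\times X$, slice with Fubini to get translation invariance of $\Pi$ for almost every $a$, upgrade to every $a$ via closedness, and invoke Lind's classification of translation-invariant $\sigma$-algebras. The only cosmetic difference is that the paper treats multiplication as a factor map $p\colon X\times X\to X$ and uses monotonicity of the outer Pinsker algebra under factors to get $p^{-1}(A)\in\Pi\otimes\Pi$, whereas you use the shear automorphisms $\Theta_{1},\Theta_{2}$ of $X\times X$ and invariance of the Pinsker algebra under automorphisms; both lead to the same sliced conclusion.
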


In \cite{Me9} we gave a formula, analogous to the Kerr-Li formulation of measure-theoretic entropy in \cite{KLi}, for measure-theoretic entropy in the presence in terms of a topological model for the factor $G\cc (X,\mu)\to G\cc (Y,\nu)$ (see \cite{Me9} Definition 2.7). Li-Liang then gave a similar formulation for topological entropy in the presence  (see \cite[Definition 9.3]{LiLiang2}). Both of these definitions are recalled in Definitions \ref{D:topentrpresence} and \ref{D:measentpresence} of this paper. Topological entropy in the presence is analogous to measure-theoretic entropy in the presence in that it measures, for a given topological factor $G\cc Y$ of $G\cc X,$ how many microstates $G\cc Y$ has which ``lift''' to microstates for $G\cc X.$  In Theorem 1.1 of \cite{Me12} we related topological entropy in the presence to measure-theoretic entropy in the presence for strongly sofic actions on compact groups. To do this, we used the similarity between our definition of measure-theoretic entropy in the presence in terms of a given compact model, and Li-Liang's definition of topological entropy in the presence. If $X$ is a compact, metrizable space and $G\cc X$ by homeomorphisms, then we say that $G\cc X$ has completely positive topological entropy in the presence if the topological entropy of $G\cc Y$ in the presence of $G\cc X$ is positive whenever $Y$ is a nontrivial topological factor of $X$. As with measure-theoretic entropy, completely positive topological entropy in the presence implies completely positive topological entropy. Using Theorem 1.1 of \cite{Me12}, we have the following corollary of Theorem \ref{T:algintro} connecting completely positive \emph{topological} entropy in the presence to completely positive \emph{measure-theoretic} entropy in the presence.

\begin{cor} Let $G$ be a countable, discrete, sofic group with sofic approximation $\sigma_{i}\colon G\to S_{d_{i}}.$ Let $X$ be a compact, metrizable group with $G\actson X$ by automorphisms. If $G\actson (X,m_{X})$ is strongly sofic with respect to $(\sigma_{i})_{i}$, then the following are equivalent:
\begin{enumerate}[(i)]
\item $G\cc (X,m_{X})$ has completely positive measure-theoretic entropy in the presence with respect to $(\sigma_{i})_{i}$, \label{I:algcpmeasentpres}
\item $G\cc X$ has completely positive topological entropy in the presence with respect to $(\sigma_{i})_{i}$, \label{I:algcptopentpres}
\item for any closed, normal, $G$-invariant subgroup $Y\subseteq X$  with $Y\ne X$ the topological entropy of $G\cc X/Y$ in the presence of $G\cc X$ (with respect to $(\sigma_{i})_{i}$) is positive.  \label{I:algtopentsubgrp}
\end{enumerate}
\end{cor}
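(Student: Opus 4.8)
The plan is to prove the cycle $(\mathrm{iii})\Rightarrow(\mathrm{i})\Rightarrow(\mathrm{ii})\Rightarrow(\mathrm{iii})$. The two main inputs are Theorem~\ref{T:algintro}, which turns (i) into a statement about algebraic quotients of $X$, and Theorem~1.1 of \cite{Me12}, which lets us pass between topological and measure-theoretic entropy in the presence for those algebraic quotients; the remaining gap — handling arbitrary topological factors in (ii) — I would close with the easy half of the variational principle for entropy in the presence.

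First I would reformulate (i). By definition the outer Pinsker factor of $G\actson(X,m_X)$ with respect to $(\sigma_i)_i$ is the largest factor with zero measure-theoretic entropy in the presence of $G\actson(X,m_X)$, so (i) is equivalent to the triviality of that outer Pinsker factor; by Theorem~\ref{T:algintro} the outer Pinsker factor is the quotient $X\to X/Y_0$ for some closed, normal, $G$-invariant $Y_0\subseteq X$, hence (i) is equivalent to $Y_0=X$. Next I would note that for each closed, normal, $G$-invariant $Y\subsetneq X$ the factor $G\actson(X/Y,m_{X/Y})$ is weakly contained in $G\actson(X,m_X)$, so it is strongly sofic with respect to $(\sigma_i)_i$ by Theorem~\ref{T:strongsoficpermanenceintro}(ii); therefore Theorem~1.1 of \cite{Me12} applies to it and identifies positivity of the topological entropy of $G\actson X/Y$ in the presence of $G\actson X$ with positivity of the measure-theoretic entropy of $G\actson(X/Y,m_{X/Y})$ in the presence of $G\actson(X,m_X)$.

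Granting this, $(\mathrm{iii})\Rightarrow(\mathrm{i})$ follows by contradiction: if $Y_0\ne X$ then $X\to X/Y_0$ is a proper algebraic quotient, so by (iii) the topological entropy of $G\actson X/Y_0$ in the presence of $G\actson X$ is positive, whence by the previous paragraph the measure-theoretic entropy of $G\actson(X/Y_0,m_{X/Y_0})$ in the presence of $G\actson(X,m_X)$ is positive; but $X\to X/Y_0$ is the outer Pinsker factor, which has zero such entropy — a contradiction — so $Y_0=X$, i.e.\ (i). For $(\mathrm{i})\Rightarrow(\mathrm{ii})$, let $\pi\colon X\to Z$ be any nontrivial topological factor and put $\nu=\pi_*m_X$; since $m_X$ has full support and $\pi$ is continuous and onto, $\nu$ has full support on $Z$, and as $Z$ has more than one point $\nu$ is not a point mass, so $G\actson(Z,\nu)$ is a nontrivial measure-theoretic factor of $G\actson(X,m_X)$. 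By (i) its measure-theoretic entropy in the presence of $G\actson(X,m_X)$ is positive, and — working through the topological model $X\to Z$ — the variational inequality bounds this from above by the topological entropy of $G\actson Z$ in the presence of $G\actson X$, which is therefore positive; since $Z$ was arbitrary, (ii) holds. Finally $(\mathrm{ii})\Rightarrow(\mathrm{iii})$ is immediate, as $X/Y$ is a nontrivial topological factor of $X$ for every proper closed normal $G$-invariant $Y$.

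The step I expect to be the main obstacle is $(\mathrm{i})\Rightarrow(\mathrm{ii})$: a general topological factor $Z$ of $X$ carries no compatible group structure, so Theorem~1.1 of \cite{Me12} does not apply to it directly, and one must descend to the measure-theoretic setting by pushing Haar measure forward — the genuine nontriviality of the pushforward, which uses that $m_X$ has full support, being the crucial point — and then climb back up via the variational inequality. I would need to make sure this inequality is available in the relative, in-the-presence form with respect to a fixed sofic approximation; it should fall out of unwinding Definitions~\ref{D:topentrpresence} and \ref{D:measentpresence}, since a measure microstate for $(Z,\nu)$ that lifts to a measure microstate for $(X,m_X)$ is in particular a topological microstate for $Z$ that lifts to a topological microstate for $X$. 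A more self-contained but longer alternative would be to prove a topological analogue of Theorem~\ref{T:algintro} — that the largest topological factor of $G\actson X$ with zero topological entropy in the presence is itself algebraic — and then compare it with $X/Y_0$ using Theorem~1.1 of \cite{Me12}.
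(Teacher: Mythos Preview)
Your argument is correct and matches the paper's: the paper also proves $(\mathrm{iii})\Rightarrow(\mathrm{i})$ via Corollary~\ref{C:algebraic} and Theorem~3.6 of \cite{Me12}, and treats $(\mathrm{i})\Rightarrow(\mathrm{ii})\Rightarrow(\mathrm{iii})$ as clear from the definitions (your variational-inequality sketch for $(\mathrm{i})\Rightarrow(\mathrm{ii})$ is exactly what ``clear'' unpacks to, since $\Map_{\mu}(Y:\rho_X,F,L,\delta,\sigma_i)\subseteq\Map(Y:\rho_X,F,\delta,\sigma_i)$). One unnecessary detour: you need not verify that $G\actson(X/Y,m_{X/Y})$ is strongly sofic, since Theorem~1.1 of \cite{Me12} only requires strong soficity of the ambient action $G\actson(X,m_X)$.
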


 	  We now consider specific actions on compact groups. For a countable group $G$ and  $f\in M_{m,n}(\C(G)),$ write $f_{sl}=\sum_{x\in G}\widehat{f_{sl}}(x)x$ for $1\leq s\leq m,1\leq l\leq n.$  We define $\lambda(f)\colon \ell^{2}(G)^{\oplus n}\to \ell^{2}(G)^{\oplus m}$ and $r(f)\colon \ell^{2}(G)^{\oplus m}\to \ell^{2}(G)^{\oplus n}$ by:
 	 \[(\lambda(f)\xi)(l)(g)=\sum_{1\leq s\leq m}\sum_{x\in G}\widehat{f_{ls}}(x)\xi(s)(x^{-1}g),\mbox{ for $1\leq l\leq m,g\in G,$}\]
 	 \[(r(f)\xi)(l)(g)=\sum_{1\leq s\leq n}\sum_{x\in G}\widehat{f_{sl}}(x)\xi(s)(x^{-1}g),\mbox{ for $1\leq l\leq n,g\in G.$}\]
We denote by $X_{f}$ the Pontryagin dual of $\Z(G)^{\oplus n}/r(f)(\Z(G)^{\oplus m}).$ We have an action $G\cc X_{f}$ given by
\[(gx)(a)=x(g^{-1}a),\mbox{ $x\in X_{f},$ $a\in \Z(G)^{\oplus n}/r(f)(\Z(G)^{\oplus m})$}.\]
In particular, by \cite{Me5} (see Proposition 2.15 of \cite{Me12}) we know that Theorem \ref{T:PinskerProdIntro} applies to $G\cc  (X_{f},m_{X_{f}})$ when $f\in M_{n}(\Z(G))$ and $\lambda(f)$ is injective. We combine this with our previous results in \cite{Me7} to give examples of algebraic actions which have completely positive measure-theoretic entropy in the presence (i.e. their outer Pinsker factor is trivial). Note that completely positive measure-theoretic entropy in the presence implies completely positive measure-theoretic entropy.

\begin{cor}\label{C:algintroCPE} Let $G$ be a countable, discrete, sofic group with sofic approximation $\sigma_{i}\colon G\to S_{d_{i}}.$ Let $f\in M_{n}(\Z(G)),$ and suppose that $\lambda(f)$ is invertible. Then $G\cc (X_{f},m_{X_{f}})$ has completely positive entropy in the presence. That is, if $G\cc (Y,\nu)$ is a measure-theoretic factor of $G\cc (X_{f},m_{X_{f}})$ and $\nu$ is not a point mass, then the entropy of $G\cc (Y,\nu)$ in the presence of $G\cc (X_{f},m_{X_{f}})$ (with respect to $(\sigma_{i})_{i}$) is positive.

\end{cor}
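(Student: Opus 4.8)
The plan is to combine the algebraic description of the outer Pinsker factor from Theorem~\ref{T:algintro} with the positivity of topological entropy in the presence for algebraic actions established in \cite{Me7}, using the equivalence of the three forms of complete positivity recorded in the Corollary to Theorem~\ref{T:algintro}.

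First I would observe that $G\cc (X_f,m_{X_f})$ is strongly sofic with respect to $(\sigma_i)_i$: invertibility of $\lambda(f)$ in particular makes it injective, and for $f\in M_n(\Z(G))$ with $\lambda(f)$ injective this is precisely the content of \cite{Me5} (as recalled in Proposition~2.15 of \cite{Me12}). Since $X_f$ is a compact, metrizable, abelian group --- being the Pontryagin dual of the countable group $\Z(G)^{\oplus n}/r(f)(\Z(G)^{\oplus n})$ --- on which $G$ acts by continuous automorphisms, both Theorem~\ref{T:algintro} and its Corollary apply. Hence, to prove that $G\cc (X_f,m_{X_f})$ has completely positive measure-theoretic entropy in the presence, it suffices --- by statement (iii) of that Corollary --- to show that for every closed, $G$-invariant subgroup $Y\subsetneq X_f$ the topological entropy of $G\cc X_f/Y$ in the presence of $G\cc X_f$ is positive.

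Next I would dualize: a closed, $G$-invariant $Y\subsetneq X_f$ corresponds under Pontryagin duality to a nonzero $\Z(G)$-submodule $N\subseteq \Z(G)^{\oplus n}/r(f)(\Z(G)^{\oplus n})$, and $G\cc X_f/Y$ is then equivariantly isomorphic to the algebraic action $G\cc \widehat{N}$. So the task becomes: for each nonzero submodule $N$ of $\Z(G)^{\oplus n}/r(f)(\Z(G)^{\oplus n})$, the topological entropy of $G\cc \widehat{N}$ in the presence of $G\cc X_f$ is strictly positive. This is where I would invoke \cite{Me7}: the hypothesis that $\lambda(f)$ is \emph{invertible} (not merely injective) should guarantee that $\Z(G)^{\oplus n}/r(f)(\Z(G)^{\oplus n})$, and hence every nonzero submodule of it, is ``thick'' enough --- quantitatively, via Fuglede--Kadison determinants and the microstate counts over $(\sigma_i)_i$ --- that the associated algebraic action carries positive topological entropy in the presence of the ambient $G\cc X_f$. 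Feeding this back, (iii) holds, hence (i) holds, which is the assertion; the concluding ``that is'' sentence merely unwinds the definition, and completely positive entropy in the presence implies completely positive sofic entropy since liftable microstates form a subset of all microstates.

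The main obstacle is this final step --- propagating the entropy lower bound from $G\cc X_f$ itself to \emph{all} of its algebraic quotients $G\cc \widehat{N}$ uniformly. This is the familiar pathology that motivates working ``in the presence'': a quotient can a priori have many microstates of its own but few that lift, so one must produce microstates for $\widehat{N}$ in a way compatible with microstates for $X_f$, and it is the invertibility (rather than just injectivity) of $\lambda(f)$, together with the strong soficity secured in the first step, that should supply the needed rigidity. I expect this to be exactly the combination of \cite{Me7} with Theorem~\ref{T:algintro} and its Corollary that the statement alludes to.
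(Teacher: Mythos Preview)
Your approach is essentially the paper's: strong soficity from \cite{Me5}, then the equivalence in Corollary \ref{C:SAPPCPEalgtop} (the ``Corollary to Theorem~\ref{T:algintro}''), then an input from \cite{Me7}. The difference is only in which leg of the equivalence you aim for and how you invoke \cite{Me7}.

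You target condition (iii), dualize to submodules $N$, and then want a uniform lower bound on $h_{\topo}(\widehat{N}:X_f)$ for every nonzero $N$. You correctly flag this as the ``main obstacle,'' and indeed your sketch leaves it unproved. The paper sidesteps this by targeting condition (ii) instead, via IE-tuples: Corollary~4.10 of \cite{Me7} gives $\IE^{2}_{(\sigma_i)_i}(X_f,G)=X_f^{2}$ whenever $\lambda(f)$ is invertible, and it is immediate from the definition of IE-pairs that $\IE^{2}=X^{2}$ forces every nontrivial topological factor to have positive entropy in the presence (this is Corollary~\ref{C:ietuplesecalg}). That single IE-tuple statement handles all quotients at once, so there is no need to analyze submodules $N$ individually or to appeal to Fuglede--Kadison determinant estimates for each of them.

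So your proof becomes complete, and matches the paper's, once you replace the final step by: ``Corollary~4.10 of \cite{Me7} gives $\IE^{2}_{(\sigma_i)_i}(X_f,G)=X_f^{2}$, hence (ii) holds, hence (i) holds.''
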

We remark that Kerr in \cite{KerrCPE} showed that Bernoulli actions have completely positive entropy. Our result covers his, since it follows from \cite{BowenOrn} that every Bernoulli action is a factor of an algebraic action of the above form. The above result is of interest to us because it furthers the connections between the ergodic theoretic properties of $G\cc (X_{f},m_{X_{f}})$ and the operator theoretic properties of $\lambda(f)$ as shown in e.g. \cite{Den},\cite{Li2},\cite{LiSchmidtPet},\cite{LiThom}. Another interesting aspect of Corollary \ref{C:algintroCPE} is that it has long been asked whether $G\cc (X_{f},m_{X_{f}})$ is Bernoulli if $f\in M_{n}(\Z(G))$ is invertible as an operator $\ell^{1}(G)^{\oplus n}\to \ell^{1}(G)^{\oplus n}$ (see e.g. \cite{LindSchmidt2} Conjecture 6.8). We believe that we should in fact have many Bernoulli-like properties of $G\cc (X_{f},m_{X_{f}})$ when $\lambda(f)$ is invertible (i.e. $f$ is invertible as a convolution operator $\ell^{2}(G)^{\oplus n}\to \ell^{2}(G)^{\oplus n}$). It is easy to see that being invertible as an operator $\ell^{1}(G)^{\oplus n}\to \ell^{1}(G)^{\oplus n}$ implies invertibility as an operator $\ell^{2}(G)^{\oplus n}\to \ell^{2}(G)^{\oplus n}.$ The above corollary provides further evidence of the Bernoulli-like behavior of these actions by showing that they have completely positive entropy. Another result on completely positive entropy for actions of sofic groups is the work of Austin-Burton in \cite{AustinBurtonCPE}. They show that if $G$ has an element of infinite order, then it has continuum many actions with completely positive entropy, none of which factor onto each other. Thus any group with an element of infinite order has many actions with completely positive entropy.

We mention here a few examples of $f\in \Z(G)$ which have $\lambda(f)$ invertible. First, if $f$ is invertible in the convolution algebra $\ell^{1}(G),$ then $\lambda(f)$ is invertible. By standard Banach algebra arguments, this applies for example if
\[f=b-\sum_{x\in G}a_{x}x\]
for $(a_{x})_{x\in G}\in c_{c}(G,\Z)$ and $b\in \Z$ with $\sum_{x\in G}|a_{x}|<|b|.$ In the case that $f$  is invertible in $M_{n}(\ell^{1}(G)),$ Corollary \ref{C:algintroCPE} follows from Theorem \ref{T:algintro}, Theorem 1.1 of \cite{Me12}, and Theorem 6.7 of \cite{KerrLi2}. However, if we only assume that $\lambda(f)$ is invertible (and not that $f$ is invertible in $M_{n}(\ell^{1}(G))$), then we need to use the full strength of \cite{Me7} instead of Theorem 6.7 of \cite{KerrLi2}.

Now consider $f$ as above, but now suppose  $b\ne 0,$ and that
\[\sum_{x\in G}|a_{x}|=b.\]
Then, if $\{y^{-1}x:x,y\in G,a_{x}\ne 0,a_{y}\ne 0\}$ generates a nonamenable group, it is well known that
\[\left\|\sum_{x}a_{x}\lambda(x)\right\|<b.\]
This again (by standard Banach algebra arguments) implies that $\lambda(f)$ is invertible. We do not know if there are certain choices of $a_{x}$ in this case which make $f$ invertible in $\ell^{1}(G).$ However, if $a_{x}\geq 0$ for every $x\in G,$ then one can show that $f$ is not invertible in $\ell^{1}(G)$ in this example (this follows from the same argument as Theorem A.1 of \cite{BowenLi}). If $a_{x}\geq 0$ for every $x\in G,$ this example is called the \emph{harmonic model}, as $X_{f}$ in this case may be regarded as the space of $\mu$-harmonic functions $f\colon G\to \R/\Z$ (i.e. functions with $\mu*f=bf$) where $\mu$ is the measure $\sum_{x\in G}a_{x}\delta_{x}.$ The entropy theory of the harmonic model for nonamenable $G$ was first studied in \cite{BowenLi}, and  is related to wired spanning forests and tree entropy as defined by Lyons in \cite{Lyons}. Similar examples can be given by considering
\[f=b+\sum_{x\in G}a_{x}x.\]
Suppose that there exist $g,h\in G$  so that the semigroup generated by $g,h$ (but not necessarily the group generated by $g,h$) is a free semigroup. Then by Example A.1 of \cite{Li2} we know that
\[f=3e+(e-g-g^{2})h\]
has $\lambda(f)$ invertible, but is not invertible in $\ell^{1}(G).$

We make a few brief remarks on the proof of Theorem \ref{T:PinskerProdIntro}. Important in the proof is the new notion of \emph{relative sofic entropy}. Given an extension $G\cc (X,\mu)\to G\cc (Y,\nu),$ relative entropy roughly measures the maximal number of ways there are to ``lift'' any fixed microstate for $Y$ to one for $X.$ Note that this is different than entropy in the presence, which roughly measures ``how many'' microstates there are for $Y$ which have a ``lift'' to $X.$  We show in Appendix \ref{A:Agree} that this agrees with relative entropy when the group acting is amenable. The method involves a language translation of results of Paunescu, Popa in \cite{Pan},\cite{PoppArg} into an ultrafilter-free form, as well as the main results of \cite{BowenAmen},\cite{KLi2}.  By the Appendix of \cite{Me9}, when the acting group is amenable  the entropy of $Y$ in the presence of $X$ is just the entropy of $Y$ (and is thus \emph{not} the relative entropy). Relative entropy is defined in Section \ref{S:relent} and its main properties are established. These properties allows us to define the \emph{relative outer Pinsker factor of $Y$ relative to $Z$ in the presence of $X$} for extensions
\[G\cc (X,\mu)\to G\cc (Y,\nu)\to G\cc (Z,\zeta).\]
We then proceed to follow the methods of Glasner-Thouvenot-Weiss in \cite{GlasThoWe} to prove our Pinsker product formula. We make a minor modification to these methods by noting that every action $G\cc (Y,\nu)$ can be written as a factor of an action  with a large automorphisms group, and that we can take this action to be strongly sofic if $G\cc (Y,\nu)$ is.  Strong soficity ends up being crucial in this argument in the following way: if $G\cc (Y,\nu)$ is strongly sofic, and $G\cc (Z,\zeta)$ is a factor of $G\cc (X,\mu),$ then the entropy of $G\cc (X\times Y,\mu\otimes \nu)$ relative to $G\cc (Z\times Y,\zeta\otimes \nu)$ (in the presence of $X\times Y$) is the entropy of $G\cc (X,\mu)$ relative to $G\cc (Z,\zeta).$ This is the crucial step (along with the construction of an extension with large automorphism group) in the argument that the outer Pinsker factor of $X\times Y$ relative to $Y$ is the product of the outer Pinsker factor of $X$ and the whole system $Y.$

	We make some brief comments on the organization of the paper. In Section \ref{S:relent}, we define relative entropy for actions of sofic groups and prove its basic properties. We also define the relative outer Pinsker algebra in this section. Because of the duality between factors and sigma-algebras this also defines the Pinsker factor. We will prefer to (mostly) state the results in the paper in terms of sigma-algebras as it makes the results clearer and avoids any issues with the fact that the Pinsker factor is only well-defined up to isomorphism. In Section \ref{S:dq}, we give some preliminaries on local and doubly empirical convergence and state the definition of strong soficity. In this section we also prove Theorem \ref{T:strongsoficpermanenceintro}.  We then prove that  if $G\cc (Y,\nu)$ is strongly sofic, then the entropy of $G\cc (X\times Y,\mu\otimes \nu)$ relative to $G\cc (Z\times Y,\zeta\otimes \nu)$ is the entropy of $G\cc (X,\mu)$ relative to $G\cc (Z,\zeta)$. In Section \ref{S:PinskerProd}, we follow the methods in \cite{GlasThoWe} and prove Theorem \ref{T:PinskerProdIntro}. In Section \ref{S:algebraic}, we give applications to actions on compact groups by automorphisms including the proof of Theorem \ref{T:algintro} and Corollary \ref{C:algintroCPE}.

\textbf{Acknowledgments.} I am grateful to Lewis Bowen and Brandon Seward for invaluable conversations. I would like to thank Brandon Seward for suggesting a major simplification of the proof of Lemma \ref{L:ultraproductrick}. Part of this work was inspired by discussion at the ``Measured Group Theory'' conference at the Erwin Schr\"{o}dinger Institute. I thank the Erwin Schr\"{o}dinger Institute for its hospitality and providing a stimulating environment in which to work. I thank the anonymous referee, whose numerous comments and simplifications greatly improved the paper.

\section{Relative Sofic Entropy}\label{S:relent}

We begin by recalling the definition of a sofic group. For a set $A$ and $n\in\N,$ we identify $A^{n}$ with all functions $\{1,\dots,n\}\to A.$ For a finite set $A,$ we use $u_{A}$ for the uniform measure on $A.$ If $A=\{1,\dots,n\}$ for some $n\in\N,$ then we use $u_{n}$ instead of $u_{\{1,\dots,n\}}.$ We use $S_{n}$ for the group of all bijections $\{1,\dots,n\}\to \{1,\dots,n\}.$
\begin{defn}\label{D:sofic} Let $G$ be a countable, discrete group. A sequence of maps $\sigma_{i}\colon G\to S_{d_{i}}$ is said to be a \emph{sofic approximation} if:
\begin{itemize}
\item for every $g,h\in G$ we have $u_{d_{i}}(\{j:\sigma_{i}(g)\sigma_{i}(h)(j)=\sigma_{i}(gh)(j)\})\to 1,$ and
\item for every $g\in G\setminus\{e\}$ we have $u_{d_{i}}(\{j:\sigma_{i}(g)(j)=j\})\to 1.$
\end{itemize}
We say that $G$ is \emph{sofic} if it has a sofic approximation.

\end{defn}

Examples of sofic groups include all amenable groups, all residually finite groups, all linear groups, all residually sofic groups, and all locally sofic groups. The class of sofic groups is also closed under free products with amalgamation over amenable subgroups (see \cite{Pan},\cite{DKP},\cite{PoppArg}).

Throughout the paper, we use the convention that a Lebesgue probability space is a probability space $(X,\mathcal{X},\mu)$ which is isomorphic modulo null sets to a \emph{completion} of a probability space $(Y,\mathcal{Y},\nu),$ where $(Y,\mathcal{Y})$ is a standard Borel space. In essentially every setting for this paper, probability spaces will be complete and we will adopt notational conventions to account for this. For example, if $(X,\mathcal{X},\mu)$ is a complete probability space, and $(A_{\alpha})_{\alpha\in I}$ are complete sub-sigma-algebras, we use $\bigvee_{\alpha\in I}A_{\alpha}$ for the smallest \emph{complete} sub-sigma-algebra of $\mathcal{X}$ containing all the $A_{\alpha}.$ Similarly, if $(X,\mathcal{X},\mu),(Y,\mathcal{Y},\nu)$ are two complete probability spaces, we will use $\mathcal{X}\otimes \mathcal{Y}$ for the completion of the sigma-algebra generated by $\{A\times B:A\in \mathcal{X},B\in \mathcal{Y}\}$ with respect to the usual product measure. We will also use $\mu\otimes \nu$ for the completion of the usual product measure.

Recall that if $(X,\mu)$ is a Lebesgue probability space, and $A$ is a measurable space, then a measurable map $\alpha\colon X\to A$ is called an \emph{observable}. We say that $\alpha$ is \emph{finite} if $A$ is a finite set and all subsets of $A$ are measurable. If $\mathcal{S}$ is a subalgebra (not necessarily a sub-sigma-algebra) of measurable subsets of $X,$ we say that $\alpha$ is $\mathcal{S}$-measurable if $\alpha^{-1}(\{a\})\in \mathcal{S}$ for all $a\in A.$  If $G$ is a countable, discrete group acting by probability measure-preserving transformations on $X$ and $F$ is a finite subset of $G,$  we define
$\alpha^{F}\colon X\to A^{F}$
by
\[\alpha^{F}(x)(h)=\alpha(h^{-1}x) \mbox{ for all $h\in F,x\in X.$}\]
If $\sigma\colon G\to S_{d}$ is a function (not assumed to be a homomorphism) and $\phi\in A^{d},$ we define
$(\phi_{\sigma}^{F})\colon\{1,\dots,d\}\to A^{F}$
by
\[(\phi_{\sigma}^{F})(j)(g)=\phi(\sigma(g)^{-1}(j)).\]
 If $\Omega$ is a set and $\nu$ is a measure defined on  a sigma-algebra of subsets of $\Omega,$ we use $\|\nu\|$ for the total variation norm of $\nu.$ We will need this only when $\Omega$ is a finite set and the sigma-algebra in question is all subsets of $X,$ in which case this norm is just the $\ell^{1}$-norm of $\nu$ (with respect to the counting measure on $\Omega$). We recall some basic notions related to measurable observables.

\begin{defn}
Let $\alpha\colon X\to A,$ $\beta\colon X\to B$ be two measurable observables. Define $\alpha\vee \beta\colon X\to A\times B$ by $(\alpha\vee \beta)(x)=(\alpha(x),\beta(x)).$
We say that $\beta$ \emph{refines} $\alpha,$ and write $\alpha\leq\beta,$ if there is a measurable map $\rho\colon B\to A$ so that $\rho(\beta(x))=\alpha(x)$ for almost every $x\in X.$

\end{defn}

For the rest of the paper (except the appendix), we fix a sofic  group $G$ and a sofic approximation $\sigma_{i}\colon G\to S_{d_{i}}.$ For the rest of this section, we fix a Lebesgue probability space $(X,\mathcal{X},\mu)$ with $G\actson (X,\mathcal{X},\mu)$ by measure-preserving transformations.

\begin{defn}
Suppose that $\alpha\colon X\to A$ is a finite, measurable observable. For $\delta>0,$ and a finite $F\subseteq G,$ we let $\AP(\alpha,F,\delta,\sigma_{i})$ be the set of all $\phi\colon \{1,\dots,d_{i}\}\to A$ so that
\[\|(\phi_{\sigma_{i}}^{F})_{*}(u_{d_{i}})-(\alpha^{F})_{*}\mu\|<\delta.\]
\end{defn}
\begin{defn}
Assume that $\alpha\colon X\to A,\beta\colon X\to B,\gamma\colon X\to C$ are  finite, measurable observables with $\gamma\ge\alpha\vee \beta.$ Let $\rho_{A}\colon C\to A,\rho_{B}\colon C\to B$ be such that $\rho_{A}(\gamma(x))=\alpha(x),\rho_{B}(\gamma(x))=\beta(x)$ for almost every $x\in X.$
For a finite $F\subseteq G$ and a $\delta>0,$ set
\[\AP(\beta:\gamma,F,\delta,\sigma_{i})=\rho_{B}\circ (\AP(\gamma,F,\delta,\sigma_{i})).\]
Given $\psi\in \AP(\beta:\gamma,F,\delta,\sigma_{i}),$ set
\[\AP(\alpha|\psi:\gamma,F,\delta,\sigma_{i})=\{\rho_{A}\circ \phi:\phi \in \AP(\gamma,F,\delta,\sigma_{i}),\rho_{B}\circ \phi=\psi\}.\]
Define
\[h_{(\sigma_{i})_{i},\mu}(\alpha|\beta:\gamma,F,\delta)=\limsup_{i\to\infty}\frac{1}{d_{i}}\log\sup_{\psi\in \AP(\beta:\gamma,F,\delta,\sigma_{i})}|\AP(\alpha|\psi:\gamma,F,\delta,\sigma_{i})|.\]
\end{defn}
\begin{defn}
Let $\mathcal{F}_{1},\mathcal{F}_{2}$ be $G$-invariant sub-sigma-algebras of $\mathcal{X}.$ Suppose that $\alpha$ is a finite $\mathcal{F}_{1}$-measurable observable and that $\beta$ is a finite $\mathcal{F}_{2}$-measurable observable. Suppose that $\gamma$ is a finite observable with $\gamma\geq \alpha\vee \beta. $ Define
\[h_{(\sigma_{i})_{i},\mu}(\alpha|\beta:\gamma)=\inf_{\substack{F\subseteq G\textnormal{ finite},\\ \delta>0}}h_{(\sigma_{i})_{i},\mu}(\alpha|\beta:\gamma,F,\delta),\]
\[h_{(\sigma_{i})_{i},\mu}(\alpha|\beta:\mathcal{X})=\inf_{\gamma}h_{(\sigma_{i})_{i},\mu}(\alpha|\beta:\gamma),\]
where the infimum over all finite measurable observables $\gamma$ with $\gamma\geq \alpha\vee \beta.$  We then define
\[h_{(\sigma_{i})_{i},\mu}(\alpha|\mathcal{F}_{2}:\mathcal{X})=\inf_{\beta}h_{(\sigma_{i})_{i},\mu}(\alpha|\beta:\mathcal{X}),\]
\[h_{(\sigma_{i})_{i},\mu}(\mathcal{F}_{1}|\mathcal{F}_{2}:\mathcal{X})=\sup_{\alpha}h_{(\sigma_{i})_{i},\mu}(\alpha|\mathcal{F}_{2}:\mathcal{X}),\]
where the infimum is over all finite $\mathcal{F}_{2}$-measurable observables $\beta$ and the supremum is over all finite $\mathcal{F}_{1}$-measurable observables $\alpha.$ We call $h_{(\sigma_{i})_{i},\mu}(\mathcal{F}_{1}|\mathcal{F}_{2}:\mathcal{X})$ the relative sofic entropy of $\mathcal{F}_{1}$ given $\mathcal{F}_{2}$ in the presence of $\mathcal{X}.$
\end{defn}

We will often blur the lines between sub-sigma-algebras and factors in the notation. Thus if $Y$ is the factor corresponding to $\mathcal{F}_{2}$ we will often write $h_{(\sigma_{i})_{i},\mu}(\mathcal{F}_{1}|Y:X)$ for $h_{(\sigma_{i})_{i},\mu}(\mathcal{F}_{1}|\mathcal{F}_{2}:\mathcal{X}).$

Let us recall the usual definition of relative entropy of observables. Let
$\alpha\colon X\to A,$ $\beta\colon X\to B$ two finite measurable observables. The relative entropy of $\alpha$ given $\beta,$ denoted $H(\alpha|\beta)$ is defined by
\[H(\alpha|\beta)=-\sum_{a\in A,b\in B}\mu(\alpha^{-1}(\{a\})\cap \beta^{-1}(\{b\}))\log\left(\frac{\mu(\alpha^{-1}(\{a\})\cap \beta^{-1}(\{b\}))}{\mu(\beta^{-1}(\{b\}))}\right).\]
The above formula is an information theoretic definition of relative entropy. The following proposition is implied by Lemma 2.13 of \cite{CsizarKo} and shows that relative entropy can also be thought of as a statistical mechanics quantity: it measures how many approximations of $\alpha\vee \beta$ there are which extend any given approximation of $\beta.$

\begin{prop}\label{P:Stirling!} Let
$\alpha\colon X\to A,$ $\beta\colon X\to B$ two finite, measurable observables. Let $\rho_{A}\colon A\times B\to A,$ $\rho_{B}\colon A\times B\to B$ be the projection maps $\rho_{A}(a,b)=a,$ $\rho_{B}(a,b)=b.$ For $\delta>0$ and $n\in \N,$ let $\Xi(\alpha\vee \beta,\delta,n)$ be the set of all $\phi\in (A\times B)^{n}$ so that
\[\|(\phi)_{*}(u_{n})-(\alpha\vee \beta)_{*}\mu\|<\delta.\]
For $\psi\in B^{n},$ let
\[\Xi(\alpha|\psi,\delta,n)=\{\phi\in A^{n}:(\phi,\psi)\in \Xi(\alpha\vee \beta,\delta,n)\}.\]
Then
\[H(\alpha|\beta)=\inf_{\delta>0}\limsup_{n\to\infty}\sup_{\psi\in \rho_{B}\circ \Xi(\alpha\vee \beta,\delta,n)}\frac{1}{n}\log |\Xi(\alpha|\psi,\delta,n)|.\]

\end{prop}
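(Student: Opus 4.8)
The plan is to prove Proposition \ref{P:Stirling!} by a direct Stirling-type count, exactly in the spirit of the quoted Lemma 2.13 of \cite{CsizarKo}, making all estimates uniform over the ``conditioning'' microstate $\psi$. First I would fix notation: write $p_{a,b}=\mu(\alpha^{-1}(\{a\})\cap\beta^{-1}(\{b\}))$ and $q_{b}=\mu(\beta^{-1}(\{b\}))=\sum_{a}p_{a,b}$, so that $H(\alpha|\beta)=\sum_{b}q_{b}H(p_{\cdot,b}/q_{b})$ where $H(\cdot)$ is the Shannon entropy of a probability vector on $A$. A point $\phi\in(A\times B)^{n}$ is determined by the partition of $\{1,\dots,n\}$ into the fibers $\phi^{-1}(\{(a,b)\})$; its empirical distribution being within $\delta$ of $(\alpha\vee\beta)_{*}\mu$ means $|\#\phi^{-1}(\{(a,b)\})/n-p_{a,b}|$ is small (controlled by $\delta$ up to the factor $|A||B|$). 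Similarly $\psi\in B^{n}$ lies in the projection $\rho_{B}\circ\Xi(\alpha\vee\beta,\delta,n)$ iff its empirical distribution on $B$ is (essentially) within $\delta$ of $q_{\cdot}$, and extending $\psi$ to some $\phi\in\Xi(\alpha|\psi,\delta,n)$ amounts to choosing, on each fiber $\psi^{-1}(\{b\})$ of size $n_{b}$, a function into $A$ whose empirical distribution is within the allowed tolerance of $p_{\cdot,b}/q_{b}$.

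Next I would carry out the count. For a fixed such $\psi$, the number of valid extensions is $\prod_{b}N_{b}$, where $N_{b}$ is the number of functions $\psi^{-1}(\{b\})\to A$ with the prescribed approximate type; this is a sum of multinomial coefficients $\binom{n_{b}}{(k_{a,b})_{a\in A}}$ over integer vectors $(k_{a,b})_{a}$ summing to $n_{b}$ with $|k_{a,b}/n-p_{a,b}|$ within the tolerance. By the standard method-of-types estimate, for each admissible type $\tfrac{1}{n_b}\log\binom{n_{b}}{(k_{a,b})_{a}}=H(k_{\cdot,b}/n_{b})+O(\log n_b/n_b)$, and since there are only polynomially many types, $\tfrac1n\log N_{b}= \tfrac{n_b}{n}\max_{\text{types}}H(k_{\cdot,b}/n_{b})+o(1)$. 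Because $n_{b}/n\to q_{b}$ and the admissible types cluster around $p_{\cdot,b}/q_{b}$, continuity of Shannon entropy gives $\limsup_{n}\tfrac1n\log N_{b}\le q_{b}H(p_{\cdot,b}/q_{b})+\varepsilon(\delta)$ with $\varepsilon(\delta)\to0$ as $\delta\to0$, uniformly in $\psi$; summing over $b$ yields $\le H(\alpha|\beta)+\varepsilon(\delta)$. For the matching lower bound I would exhibit one good $\psi$ and one good $\phi$: take $n_{b}=\lfloor n q_{b}\rfloor$ (adjusting one coordinate so they sum to $n$) and $k_{a,b}=\lfloor n p_{a,b}\rfloor$ similarly adjusted; this $\psi$ is in the projection for $n$ large, and the single multinomial coefficient $\prod_{b}\binom{n_b}{(k_{a,b})_a}$ already has $\tfrac1n\log$ converging to $H(\alpha|\beta)$, so the inner $\sup_{\psi}$ is $\ge H(\alpha|\beta)-o(1)$. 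Taking $\limsup_{n}$ then $\inf_{\delta>0}$ pins the quantity to $H(\alpha|\beta)$.

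The only genuinely delicate point is bookkeeping the tolerances: one must check that the total-variation condition $\|\phi_{*}u_{n}-(\alpha\vee\beta)_{*}\mu\|<\delta$ on the joint empirical distribution translates into a per-fiber tolerance on the conditional types that (a) is nonempty for the lower-bound construction once $n$ is large and (b) shrinks to $0$ as $\delta\to0$ so that the continuity estimate closes. This is routine — total variation on a product alphabet dominates and is dominated (up to constants $|A|,|B|$) by the $\ell^\infty$ discrepancy of the entry-probabilities, and a fiber of relative size $\approx q_b$ inherits a conditional discrepancy of order $\delta/q_b$, which is harmless for the finitely many $b$ with $q_b>0$ (coordinates with $q_b=0$ contribute negligibly). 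I would simply remark that this is the content of Lemma 2.13 of \cite{CsizarKo} and that the uniformity over $\psi$ is built into the method-of-types bound, citing that lemma for the full details rather than reproducing them.
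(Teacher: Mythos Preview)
Your proposal is correct and matches the paper's approach: the paper does not give an independent proof of this proposition at all, it simply states that the result ``is implied by Lemma 2.13 of \cite{CsizarKo}'' and moves on. You have gone further by actually sketching the method-of-types argument behind that lemma (multinomial counts on each $\psi$-fiber, Stirling/continuity for the upper bound, an explicit near-typical $\psi$ for the lower bound, and the tolerance bookkeeping), but the underlying route is identical and your final paragraph even cites the same lemma for the details.
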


We prove some easy properties of relative sofic entropy.

\begin{prop}\label{P:basicpropertieslakdhgkl}
Fix finite, measurable observables $\alpha,\beta,\gamma$ with domain $X$ and so that $\gamma\geq \alpha\vee \beta.$ We have the following properties of relative sofic entropy:
\begin{enumerate}[(i)]
\item $h_{(\sigma_{i})_{i},\mu}(\alpha|\beta:\gamma)\leq H(\alpha|\beta),$\label{I:bound1lasdfhgl}\\
\item $h_{(\sigma_{i})_{i},\mu}(\alpha|\beta:\gamma)\leq h_{(\sigma_{i})_{i},\mu}(\alpha'|\beta:\gamma) +h_{(\sigma_{i})_{i},\mu}(\alpha|\alpha':\gamma)$ if $\alpha'$ is a finite, measurable observable and $\gamma\geq \alpha'\vee \beta,$ \label{I:bound2alkghal}\\
\item $h_{(\sigma_{i})_{i},\mu}(\alpha|\beta:\gamma)\leq h_{(\sigma_{i})_{i},\mu}(\alpha|\beta':\gamma) +h_{(\sigma_{i})_{i},\mu}(\beta'|\beta:\gamma)$ if $\beta'$ is a finite, measurable observable and $\gamma\geq \alpha\vee \beta'.$ \label{I:bound3algak}\\
\item For every finite $F\subseteq G,$ every $\delta>0,$  every $i\in \N,$ and all $\psi\in\AP(\beta:\gamma,F,\delta,\sigma_{i}),$
\[|\AP(\alpha|\psi:\gamma,F,\delta,\sigma_{i})|=|\AP(\alpha\vee \beta|\psi:\gamma,F,\delta,\sigma_{i})|.\]
In particular,
\[h_{(\sigma_{i})_{i},\mu}(\alpha|\beta:\gamma,F,\delta)=h_{(\sigma_{i})_{i},\mu}(\alpha\vee \beta|\beta:\gamma,F,\delta) \textnormal{ \emph{and} } h_{(\sigma_{i})_{i},\mu}(\alpha|\beta:\gamma)=h_{(\sigma_{i})_{i},\mu}(\alpha\vee\beta|\beta:\gamma).\]
\label{I:this is obvious yo!}
\item If $\alpha_{0}$ is a measurable observable with domain $X$ and with $\alpha_{0}\leq \alpha,$ then for any $g\in G$ we have:
\[h_{(\sigma_{i})_{i},\mu}(\alpha\vee g\alpha_{0}|\beta:\mathcal{X})=h_{(\sigma_{i})_{i},\mu}(\alpha|\beta:\mathcal{X}).\]
\label{I:invariance1aldkg}
\item If $\beta_{0}$ is a measurable observable  with domain $X$ and $\beta_{0}\leq \beta,$ then for any $g\in G$ we have:
\[h_{(\sigma_{i})_{i},\mu}(\alpha|\beta:\mathcal{X})=h_{(\sigma_{i})_{i},\mu}(\alpha|\beta\vee g\beta_{0}:\mathcal{X}).\]\label{I:invariance2adlghal}

\end{enumerate}
\end{prop}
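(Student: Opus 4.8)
The plan is to treat all six parts as elementary counting statements about the sets $\AP(\gamma,F,\delta,\sigma_{i})$, leaning on two general observations. First, for fixed $\gamma$ the set $\AP(\gamma,F,\delta,\sigma_{i})$ only shrinks as $F$ grows and $\delta$ decreases, because the coordinate restrictions $C^{F_{2}}\to C^{F_{1}}$ for $F_{1}\subseteq F_{2}$ do not increase total variation; similarly $\gamma\mapsto h_{(\sigma_{i})_{i},\mu}(\,\cdot\mid\cdot:\gamma)$ is nonincreasing in $\gamma$. Consequently every infimum over finite $F\subseteq G$ and $\delta>0$, and every infimum over $\gamma$, is a limit along a directed set, so it distributes over finite sums of such quantities and absorbs any error terms that tend to $0$ along the net. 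Second, if $\alpha_{0}=\rho_{0}\circ\alpha$ is coarser than $\alpha$ and $g\in G$, then the almost-homomorphism property $\sigma_{i}(g)\sigma_{i}(h)\approx\sigma_{i}(gh)$ forces the $g\alpha_{0}$-coordinate of a microstate to agree with $\rho_{0}\circ(\text{its }\alpha\text{-coordinate})\circ\sigma_{i}(g)^{-1}$ off a Hamming-negligible set of indices. I would prove the parts in the order (vi), (i), (ii)--(iii), (iv)--(v).

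Part (vi) is a relabelling: for a fixed $\psi$, the map $\zeta\mapsto(\zeta,\psi)$ is a bijection from $\AP(\alpha\mid\psi:\gamma,F,\delta,\sigma_{i})$ onto $\AP(\alpha\vee\beta\mid\psi:\gamma,F,\delta,\sigma_{i})$ (where $\alpha\vee\beta$ is formed with the projection $c\mapsto(\rho_{A}(c),\rho_{B}(c))$), so the cardinalities coincide; taking $\sup_{\psi\in\AP(\beta:\gamma,F,\delta,\sigma_{i})}$, then $\frac{1}{d_{i}}\log$, then $\limsup_{i}$, then $\inf_{F,\delta}$ yields the displayed equalities. For part (i) I would enlarge $F$ to contain $e$ (harmless, since $F$ runs over all finite subsets) and push the defining inequality of $\AP(\gamma,F,\delta,\sigma_{i})$ forward under ``evaluate the $F$-tuple at $e$, then apply $c\mapsto(\rho_{A}(c),\rho_{B}(c))$''; this shows that for $\phi\in\AP(\gamma,F,\delta,\sigma_{i})$ with $\rho_{B}\circ\phi=\psi$ one has $(\rho_{A}\circ\phi,\psi)\in\Xi(\alpha\vee\beta,\delta,d_{i})$ in the notation of Proposition~\ref{P:Stirling!} (with an extra $o(1)$ slack in $\delta$ when $\sigma_{i}(e)\neq\id$). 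Hence $\AP(\alpha\mid\psi:\gamma,F,\delta,\sigma_{i})\subseteq\Xi(\alpha\mid\psi,\delta,d_{i})$ and every admissible $\psi$ lies in $\rho_{B}\circ\Xi(\alpha\vee\beta,\delta,d_{i})$, so $h_{(\sigma_{i})_{i},\mu}(\alpha\mid\beta:\gamma,F,\delta)\leq\limsup_{i}\frac{1}{d_{i}}\log\sup_{\psi}|\Xi(\alpha\mid\psi,\delta,d_{i})|$; taking $\inf_{F,\delta}$ and invoking Proposition~\ref{P:Stirling!} gives $h_{(\sigma_{i})_{i},\mu}(\alpha\mid\beta:\gamma)\leq H(\alpha\mid\beta)$.

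For parts (ii) and (iii) I would fix an admissible $\psi$ and factor the count of $\rho_{A}\circ\phi$ over $\phi\in\AP(\gamma,F,\delta,\sigma_{i})$ with $\rho_{B}\circ\phi=\psi$ through an intermediate coordinate: for (ii) record the pair $(\rho_{A'}\circ\phi,\rho_{A}\circ\phi)$, for (iii) record $(\rho_{B'}\circ\phi,\rho_{A}\circ\phi)$. The number of possible first coordinates is at most $|\AP(\alpha'\mid\psi:\gamma,F,\delta,\sigma_{i})|$ (respectively $|\AP(\beta'\mid\psi:\gamma,F,\delta,\sigma_{i})|$); given a first coordinate $\chi$, the number of second coordinates $\rho_{A}\circ\phi$ arising from $\phi\in\AP(\gamma,F,\delta,\sigma_{i})$ with that first coordinate is at most $|\AP(\alpha\mid\chi:\gamma,F,\delta,\sigma_{i})|$ (after dropping the constraint $\rho_{B}\circ\phi=\psi$), and $\chi$ ranges over $\AP(\alpha':\gamma,F,\delta,\sigma_{i})$ (respectively $\AP(\beta':\gamma,F,\delta,\sigma_{i})$); here the hypotheses $\gamma\geq\alpha'\vee\beta$ resp.\ $\gamma\geq\alpha\vee\beta'$ are exactly what is needed for these sets to be defined. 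Taking $\sup_{\psi}$, then $\frac{1}{d_{i}}\log$, and using subadditivity of $\limsup$ gives $h_{(\sigma_{i})_{i},\mu}(\alpha\mid\beta:\gamma,F,\delta)\leq h_{(\sigma_{i})_{i},\mu}(\alpha'\mid\beta:\gamma,F,\delta)+h_{(\sigma_{i})_{i},\mu}(\alpha\mid\alpha':\gamma,F,\delta)$; since each summand is monotone in $(F,\delta)$, the infimum distributes over the sum, proving (ii), and (iii) is identical with $\beta'$ in place of $\alpha'$.

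Parts (iv) and (v) are where the Hamming-perturbation observation is used, and this is the step I expect to be the main obstacle. Using that $\gamma\mapsto h_{(\sigma_{i})_{i},\mu}(\,\cdot\mid\cdot:\gamma)$ is nonincreasing and that the $\gamma$ refining all relevant observables are cofinal among admissible $\gamma$, it suffices to prove for such $\gamma$ that $h_{(\sigma_{i})_{i},\mu}(\alpha\vee g\alpha_{0}\mid\beta:\gamma)=h_{(\sigma_{i})_{i},\mu}(\alpha\mid\beta:\gamma)$ (and likewise $h_{(\sigma_{i})_{i},\mu}(\alpha\mid\beta\vee g\beta_{0}:\gamma)=h_{(\sigma_{i})_{i},\mu}(\alpha\mid\beta:\gamma)$) and then take $\inf$ over such $\gamma$. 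The ``$\geq$'' directions are monotonicity of $h_{(\sigma_{i})_{i},\mu}(\,\cdot\mid\beta:\gamma)$ in the first argument (for (iv)) and the trivial inclusion $\AP(\alpha\mid(\psi,\psi_{0}):\gamma,\dots)\subseteq\AP(\alpha\mid\psi:\gamma,\dots)$ (for (v)). For ``$\leq$'' in (iv): writing $\alpha_{0}=\rho_{0}\circ\alpha$, the $\mu$-a.s.\ relation $g\alpha_{0}(x)=\rho_{0}(\alpha(g^{-1}x))$ cuts out a null event inside $C^{\{e,g\}}$, and for $\phi\in\AP(\gamma,F,\delta,\sigma_{i})$ with $\{e,g\}\subseteq F$ this forces $(\rho_{g\alpha_{0}}\circ\phi)(j)=\rho_{0}\bigl((\rho_{A}\circ\phi)(\sigma_{i}(g)^{-1}j)\bigr)$ for all $j$ outside a set of size less than $\epsilon(F,\delta)d_{i}$ with $\epsilon(F,\delta)\to0$ as $\delta\to0$; hence, for each value of $\rho_{A}\circ\phi$, the value of $\rho_{g\alpha_{0}}\circ\phi$ has at most $e^{c(F,\delta)d_{i}}$ possibilities by a binomial estimate, with $c(F,\delta)\to0$. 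Therefore $h_{(\sigma_{i})_{i},\mu}(\alpha\vee g\alpha_{0}\mid\beta:\gamma,F,\delta)\leq h_{(\sigma_{i})_{i},\mu}(\alpha\mid\beta:\gamma,F,\delta)+c(F,\delta)$, and $\inf_{F,\delta}$ kills the error. Part (v) runs the same estimate on the conditioning coordinate: the number of $\psi_{0}=\rho_{g\beta_{0}}\circ\phi$ compatible with a given $\psi=\rho_{B}\circ\phi$ is at most $e^{c(F,\delta)d_{i}}$, so $\AP(\alpha\mid\psi:\gamma,\dots)$ is covered by that many sets $\AP(\alpha\mid(\psi,\psi_{0}):\gamma,\dots)$, giving $h_{(\sigma_{i})_{i},\mu}(\alpha\mid\beta:\gamma,F,\delta)\leq h_{(\sigma_{i})_{i},\mu}(\alpha\mid\beta\vee g\beta_{0}:\gamma,F,\delta)+c(F,\delta)$. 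The only genuinely delicate points are this Hamming estimate for the translate coordinate and the verification that all the infima over $(F,\delta)$ and over $\gamma$ really do commute with the sums and swallow the $e^{o(d_{i})}$ factors — that is, making the heuristic ``enlarge $F$ and $\gamma$ at will; sub-exponential errors are harmless'' rigorous through the monotone-net remarks of the first paragraph.
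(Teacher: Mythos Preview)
Your proposal is correct and follows essentially the same approach as the paper: parts (i), (ii), (iii), and (vi) are handled identically (Stirling/Proposition~\ref{P:Stirling!}, factoring the count through an intermediate coordinate, and the bijection $\zeta\mapsto(\zeta,\psi)$, respectively), and the core of (iv)--(v) in both arguments is the same Hamming-perturbation estimate showing that the $g\alpha_{0}$-coordinate of a $\gamma$-microstate is determined by its $\alpha$-coordinate off a set of size $\lesssim\delta d_{i}$.

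The only organizational difference is that the paper, rather than applying the Hamming estimate in place and then managing the cofinality of admissible $\gamma$, first uses the chain-rule inequalities (ii) and (iii) to reduce both (iv) and (v) to the single auxiliary statement $h_{(\sigma_{i})_{i},\mu}(g\alpha\mid\alpha:\mathcal{X})\leq 0$, which it proves once with the Hamming argument at the specific observable $\gamma=\alpha\vee g\alpha$. This buys a slightly cleaner bookkeeping (no need to argue that the infima over $\gamma$ commute with the comparison), while your direct route avoids invoking (ii)--(iii) as intermediate lemmas; neither version introduces any idea the other lacks.
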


\begin{proof}
Item (\ref{I:bound1lasdfhgl}) is a direct application of Proposition \ref{P:Stirling!}. For the remaining items, let $A,A',B,C$ be the codomains of $\alpha,\alpha',\beta,\gamma$ respectively.

(\ref{I:bound2alkghal}): Let $\rho_{A}\colon C\to A,$ $\rho_{A'}\colon C\to A' ,$ and $\rho_{B}\colon C\to B$ be such that
\[\rho_{A}(\gamma(x))=\alpha(x),\,\rho_{A'}(\gamma(x))=\alpha'(x),\,\textnormal{ and } \rho_{B}(\gamma(x))=\beta(x)\]
for almost every $x\in X.$
Fix a finite $F\subseteq G$ and a $\delta>0,$ and let $\psi\in \AP(\beta:\gamma,F,\delta,\sigma_{i}).$ We have that
\[|\AP(\alpha|\psi:\gamma,F,\delta,\sigma_{i})|\leq \sum_{\phi'\in \AP(\alpha'|\psi:\gamma,F,\delta,\sigma_{i})}|\{\phi\in \AP(\alpha|\psi:\gamma,F,\delta,\sigma_{i}):(\phi,\phi')\in \AP(\alpha\vee \alpha'|\psi:\gamma,F,\delta,\sigma_{i})\}|.\]
Observe that if $(\phi,\phi')\in \AP(\alpha\vee \alpha'|\psi:\gamma,F,\delta,\sigma_{i}),$ then $(\phi,\phi')\in \AP(\alpha\vee \alpha':\gamma,F,\delta,\sigma_{i}),$ and so $\phi\in \AP(\alpha|\phi':\gamma,F,\delta,\sigma_{i}).$ We thus see that
\[|\AP(\alpha|\psi:\gamma,F,\delta,\sigma_{i})|\leq |\AP(\alpha'|\psi:\gamma,F,\delta,\sigma_{i})|\sup_{\widetilde{\phi}}|\AP(\alpha|\widetilde{\phi}:\gamma,F,\delta,\sigma_{i})|,\]
where the supremum is over all $\widetilde{\phi}\in \AP(\alpha':\gamma,F,\delta,\sigma_{i}).$ Taking the supremum over $\psi,$ applying $\frac{1}{d_{i}}\log$ to both sides, and letting $i\to\infty$ proves that
\[h_{(\sigma_{i})_{i},\mu}(\alpha|\beta:\gamma,F,\delta)\leq h_{(\sigma_{i})_{i},\mu}(\alpha'|\beta:\gamma,F,\delta,\sigma_{i})+h_{(\sigma_{i})_{i},\mu}(\alpha|\alpha':\gamma,F,\delta).\]
Now taking the infimum over $F,\delta$ completes the proof.

(\ref{I:bound3algak}): This is proved in the same way as (\ref{I:bound2alkghal}).

(\ref{I:this is obvious yo!}):
Let $\rho_{A}\colon C\to A,$ and $\rho_{B}\colon C\to B$ be such that $\rho_{A}\circ \gamma=\alpha,\rho_{B}\circ \gamma=\beta$ almost everywhere. Define $\rho_{A\times B}\colon C\to A\times B$ by $\rho_{A\times B}(c)=(\rho_{A}(c),\rho_{B}(c))$ for all $c\in C.$ Then $\rho_{A\times B}\circ \gamma=\alpha\vee \beta$ almost everywhere. Define $\pi_{A}\colon A\times B\to A,$ $ \pi_{B}\colon A\times B\to B$ by $\pi_{A}(a,b)=a,\pi_{B}(a,b)=b$ for all $(a,b)\in A\times B.$

Suppose that $\psi\in \AP(\beta:\gamma,F,\delta,\sigma_{i}),$ and let $\phi\in \AP(\alpha\vee \beta|\psi:\gamma,F,\delta,\sigma_{i}).$ Let $\widetilde{\phi}\in \AP(\gamma,F,\delta,\sigma_{i})$ be such that $\rho_{B}\circ \widetilde{\phi}=\psi,$ $\rho_{A\times B}\circ \widetilde{\phi}=\phi.$ Then,
\[\pi_{B}\circ \phi=\pi_{B}\circ \rho_{A\times B}\circ \widetilde{\phi}=\rho_{B}\circ \widetilde{\phi}=\psi.\]
In other words, the projection of $\phi$ onto the second coordinate must be $\psi.$ From this it follows that composing with $\pi_{A}$ induces a bijection $\AP(\alpha\vee \beta|\psi:\gamma,F,\delta,\sigma_{i})\to \AP(\alpha|\psi:\gamma,F,\delta,\sigma_{i})$ with inverse $\theta\mapsto \theta\vee \psi.$

(\ref{I:invariance1aldkg}): As $\alpha\leq\alpha\vee g\alpha_{0},$ we have
\[h_{(\sigma_{i})_{i},\mu}(\alpha|\beta:\mathcal{X})\leq h_{(\sigma_{i})_{i},\mu}(\alpha\vee g\alpha_{0}|\beta:\mathcal{X}).\]
By (\ref{I:bound2alkghal}), (\ref{I:this is obvious yo!}) we have:
\begin{align*}
h_{(\sigma_{i})_{i},\mu}(\alpha\vee g\alpha_{0}|\beta:\mathcal{X})&\leq h_{(\sigma_{i})_{i},\mu}(\alpha|\beta:\mathcal{X})+h_{(\sigma_{i})_{i},\mu}(g\alpha_{0}|\alpha:\mathcal{X})\\
&\leq h_{(\sigma_{i})_{i},\mu}(\alpha|\beta:\mathcal{X})+h_{(\sigma_{i})_{i},\mu}(g\alpha|\alpha:\mathcal{X}),
\end{align*}
so we only have to show that
\[h_{(\sigma_{i})_{i},\mu}(g\alpha|\alpha:\mathcal{X})\leq 0.\]

To prove this, suppose we are given finite $F\subseteq G$ with $g\in F$ and a $\delta\in (0,1/2).$ Let $\rho_{A}\colon A\times A\to A$, $\rho_{A}^{g}\colon A\times A\to A$ be the projections onto the first and second factors, respectively. Fix a $\psi\in \AP(\alpha:g\alpha\vee \alpha ,F,\delta,\sigma_{i})$. Suppose we are given a $\phi\in \AP(g\alpha \vee \alpha,F,\delta,\sigma_{i})$ and that $\rho_{A}\circ \phi=\psi.$ Set $\tau=\rho_{A}^{g}\circ \phi.$ We start by estimating the size of
\[u_{d_{i}}(\{j:\tau(j)\ne \psi(\sigma_{i}(g)^{-1}(j))\}).\]
We have that
\begin{align*}
u_{d_{i}}(\{j:\tau(j)\ne \psi(\sigma_{i}(g)^{-1}(j))\})&=u_{d_{i}}(\{j:\rho_{A}^{g}(\phi_{\sigma_{i}}^{F}(j)(e))\ne \rho_{A}(\psi_{\sigma_{i}}^{F}(j)(g))\})\\
&\leq \delta+\mu(\{x:\rho_{A}^{g}(\gamma^{F}(x)(e))\ne \rho_{A}(\gamma^{F}(x)(g))\})\\
&=\delta+\mu(\{x:\rho^{g}_{A}(\gamma(x))\ne \rho_{A}(\gamma(g^{-1}x))\})\\
&=\delta+\mu(\{x:(g\alpha)(x)\ne \alpha(g^{-1}x)\})\\
&=\delta.
\end{align*}
Thus, given $\psi,$ we can determine $\tau$ on a subset of $\{1,\dots,d_{i}\}$ of size at least $(1-\delta)d_{i}.$ The number of subsets of $\{1,\dots,d_{i}\}$ of size at most $\delta d_{i}$ is at most
\[\sum_{r=1}^{\lfloor{\delta d_{i}\rfloor}}\binom{d_{i}}{r}\leq \delta d_{i}\binom{d_{i}}{\lfloor{\delta d_{i}\rfloor}},\]
as $\delta<1/2.$ Thus
\[|\AP(g\alpha|\psi;F,\delta,\sigma_{i})|\leq \delta d_{i} \binom{d_{i}}{\lfloor{\delta d_{i}\rfloor}}|A|^{\delta d_{i}}.\]
As $\psi$ was arbitrary,
\begin{align*}
h_{(\sigma_{i})_{i},\mu}(g\alpha|\alpha:g\alpha \vee \alpha, F,\delta )&\leq \delta \log |A|+\limsup_{i\to\infty}\frac{1}{d_{i}}\log \binom{d_{i}}{\lfloor{\delta d_{i}\rfloor}}\\
&= \delta \log|A|-\delta \log(\delta)-(1-\delta)\log(1-\delta),
\end{align*}
where in the last line we apply Stirling's formula.
Letting $\delta\to 0$  and taking the infimum over $F$ shows that
\[h_{(\sigma_{i})_{i},\mu}(g\alpha|\alpha:\mathcal{X})\leq h_{(\sigma_{i})_{i},\mu}(g\alpha|\alpha:g\alpha\vee \alpha)\leq 0.\]

(\ref{I:invariance2adlghal}): It is clear that
$h_{(\sigma_{i})_{i},\mu}(\alpha|\beta\vee g\beta_{0}:\mathcal{X})\leq h_{(\sigma_{i})_{i},\mu}(\alpha|\beta:\mathcal{X}).$
 By (\ref{I:bound3algak}),(\ref{I:this is obvious yo!}) we have
\begin{align*}
h_{(\sigma_{i})_{i},\mu}(\alpha|\beta:\mathcal{X})&\leq h_{(\sigma_{i})_{i},\mu}(\alpha|\beta\vee g\beta_{0}:\mathcal{X})+h_{(\sigma_{i})_{i},\mu}(g\beta_{0}|\beta:\mathcal{X})\\
&\leq h_{(\sigma_{i})_{i},\mu}(\alpha|\beta\vee g\beta_{0}:\mathcal{X})+h_{(\sigma_{i})_{i},\mu}(g\beta|\beta:\mathcal{X}).
\end{align*}
We saw in the last step that
$h_{(\sigma_{i})_{i},\mu}(g\beta|\beta:\mathcal{X})\leq 0,$
 so this completes the proof.

\end{proof}
From Proposition \ref{P:basicpropertieslakdhgkl} one can directly show the following.

\begin{prop}\label{P:basicpropertiesagainlshdl}
Fix $G$-invariant sigma-algebras $\mathcal{F}_{1},\mathcal{F}_{2}\subseteq \mathcal{X}.$ We have the following properties of relative sofic entropy:
\begin{enumerate}[(i)]
\item $h_{(\sigma_{i})_{i},\mu}(\mathcal{F}_{2}:\mathcal{X})\leq h_{(\sigma_{i})_{i},\mu}(\mathcal{F}_{1}:\mathcal{X})+h_{(\sigma_{i})_{i},\mu}(\mathcal{F}_{2}|\mathcal{F}_{1}:\mathcal{X}),$\\ \label{P;subaddaklshdgila}
\item $h_{(\sigma_{i})_{i},\mu}(\alpha|\mathcal{F}_{1}:\mathcal{X})\leq H(\alpha|\mathcal{F}_{1})$ for any finite $\mathcal{X}$-measurable observable $\alpha,$ \\ \label{P:trivialboudnaasdlj}
\item $h_{(\sigma_{i})_{i},\mu}(\alpha|\mathcal{F}_{1}:\mathcal{X})\leq H(\alpha|\mathcal{F}_{2})+h_{(\sigma_{i})_{i},\mu}(\mathcal{F}_{2}|\mathcal{F}_{1}:\mathcal{X})$ for any finite $\mathcal{X}$-measurable observable $\alpha.$ \label{P:trivial bound 2 alksjlaj}

\end{enumerate}

\end{prop}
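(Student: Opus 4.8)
The plan is to deduce all three items from Proposition~\ref{P:basicpropertieslakdhgkl} by approximating the relevant sigma-algebras with finite observables and passing to infima/suprema. Throughout I will use that $h_{(\sigma_i)_i,\mu}(\alpha|\mathcal{F}_1:X)=\inf_\beta h_{(\sigma_i)_i,\mu}(\alpha|\beta:\mathcal{X})$ over finite $\mathcal{F}_1$-measurable $\beta$, together with the fact that $H(\alpha|\mathcal{F}_1)=\inf_\beta H(\alpha|\beta)$ over the same family of $\beta$ (a standard fact about relative Shannon entropy, via the martingale/monotone convergence argument for conditional expectations), and similarly that $H(\cdot|\cdot)$ is continuous under refinement.

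For item~(\ref{P:trivialboudnaasdlj}): fix a finite $\mathcal{X}$-measurable $\alpha$. For any finite $\mathcal{F}_1$-measurable $\beta$ we may choose $\gamma\geq\alpha\vee\beta$ finite and $\mathcal{X}$-measurable, and then Proposition~\ref{P:basicpropertieslakdhgkl}(\ref{I:bound1lasdfhgl}) gives $h_{(\sigma_i)_i,\mu}(\alpha|\beta:\gamma)\leq H(\alpha|\beta)$; taking the infimum over $\gamma$ yields $h_{(\sigma_i)_i,\mu}(\alpha|\beta:\mathcal{X})\leq H(\alpha|\beta)$, and then the infimum over $\beta$ gives $h_{(\sigma_i)_i,\mu}(\alpha|\mathcal{F}_1:X)\leq \inf_\beta H(\alpha|\beta)=H(\alpha|\mathcal{F}_1)$.

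For item~(\ref{P:trivial bound 2 alksjlaj}): fix a finite $\mathcal{X}$-measurable $\alpha$, and let $\beta'$ be any finite $\mathcal{F}_2$-measurable observable and $\beta$ any finite $\mathcal{F}_1$-measurable observable. Applying Proposition~\ref{P:basicpropertieslakdhgkl}(\ref{I:bound3algak}) with a common refinement $\gamma\geq\alpha\vee\beta\vee\beta'$ gives $h_{(\sigma_i)_i,\mu}(\alpha|\beta:\gamma)\leq h_{(\sigma_i)_i,\mu}(\alpha|\beta':\gamma)+h_{(\sigma_i)_i,\mu}(\beta'|\beta:\gamma)$. Combining with item~(\ref{P:trivialboudnaasdlj}) applied to the observable $\alpha$ with respect to $\mathcal{F}_2$ (which bounds the first term by $H(\alpha|\beta')$ after the appropriate infima), then taking the infimum over $\gamma$ and over $\beta'$, and finally the infimum over $\beta$, yields $h_{(\sigma_i)_i,\mu}(\alpha|\mathcal{F}_1:X)\leq H(\alpha|\mathcal{F}_2)+h_{(\sigma_i)_i,\mu}(\mathcal{F}_2|\mathcal{F}_1:X)$. (Here one must be a little careful about the order of the infima, since $h_{(\sigma_i)_i,\mu}(\mathcal{F}_2|\mathcal{F}_1:X)=\sup_{\beta'}\inf_\beta h_{(\sigma_i)_i,\mu}(\beta'|\beta:\mathcal{X})$; but since we take $\inf_{\beta'}H(\alpha|\beta')=H(\alpha|\mathcal{F}_2)$ on the first term while the second term is already being dominated by its sup over $\beta'$, choosing $\beta'$ large enough to nearly achieve both infima simultaneously closes the argument.)

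For item~(\ref{P;subaddaklshdgila}): this is the subadditivity $h(\mathcal{F}_2:X)\leq h(\mathcal{F}_1:X)+h(\mathcal{F}_2|\mathcal{F}_1:X)$, where $h(\mathcal{F}:X)$ denotes the entropy in the presence, i.e.\ $h(\mathcal{F}_j:X)=h_{(\sigma_i)_i,\mu}(\mathcal{F}_j|\{\emptyset,X\}:\mathcal{X})$. Fix a finite $\mathcal{F}_2$-measurable $\alpha$; I want $h_{(\sigma_i)_i,\mu}(\alpha|\{\emptyset,X\}:\mathcal{X})\leq h_{(\sigma_i)_i,\mu}(\mathcal{F}_1:X)+h_{(\sigma_i)_i,\mu}(\mathcal{F}_2|\mathcal{F}_1:X)$. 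For a finite $\mathcal{F}_1$-measurable $\alpha'$, Proposition~\ref{P:basicpropertieslakdhgkl}(\ref{I:bound2alkghal}) applied with the trivial $\beta$ and a refinement $\gamma\geq\alpha\vee\alpha'$ gives $h_{(\sigma_i)_i,\mu}(\alpha|\{\emptyset,X\}:\gamma)\leq h_{(\sigma_i)_i,\mu}(\alpha'|\{\emptyset,X\}:\gamma)+h_{(\sigma_i)_i,\mu}(\alpha|\alpha':\gamma)$; taking $\inf_\gamma$ and then choosing $\alpha'$ appropriately (so that $h_{(\sigma_i)_i,\mu}(\alpha'|\{\emptyset,X\}:\mathcal{X})$ is close to $h_{(\sigma_i)_i,\mu}(\mathcal{F}_1:X)$ and $h_{(\sigma_i)_i,\mu}(\alpha|\alpha':\mathcal{X})$ is close to $\inf_{\alpha'}h_{(\sigma_i)_i,\mu}(\alpha|\alpha':\mathcal{X})=h_{(\sigma_i)_i,\mu}(\alpha|\mathcal{F}_1:X)\leq h_{(\sigma_i)_i,\mu}(\mathcal{F}_2|\mathcal{F}_1:X)$), then taking $\sup_\alpha$, gives the claim. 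The main obstacle, as in item (\ref{P:trivial bound 2 alksjlaj}), is purely bookkeeping: one must interleave several $\inf$/$\sup$ operations over refining families of observables correctly, using that these families are directed under $\vee$ so that a single observable can be chosen to nearly realize two infima at once, and that refining $\alpha'$ only decreases $h_{(\sigma_i)_i,\mu}(\alpha|\alpha':\gamma)$ while the cross-terms behave monotonically. No new dynamical input beyond Proposition~\ref{P:basicpropertieslakdhgkl} is needed.
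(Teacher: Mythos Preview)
Your proposal is correct and is precisely what the paper intends: the paper offers no proof beyond the sentence ``From Proposition~\ref{P:basicpropertieslakdhgkl} one can directly show the following,'' and your argument supplies exactly those routine deductions. The only point worth making explicit is the monotonicity $h_{(\sigma_i)_i,\mu}(\alpha|\beta:\gamma')\leq h_{(\sigma_i)_i,\mu}(\alpha|\beta:\gamma)$ for $\gamma'\geq\gamma$ (immediate since pushing forward $\AP(\gamma',F,\delta,\sigma_i)$ lands in $\AP(\gamma,F,\delta,\sigma_i)$), which justifies the directed-under-$\vee$ trick you invoke for combining the infima over $\gamma$; once that is noted, the bookkeeping in items~(\ref{P;subaddaklshdgila}) and~(\ref{P:trivial bound 2 alksjlaj}) is in fact cleaner than you fear, since one can simply bound one term by the relevant supremum before taking the infimum in the other variable.
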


We can now show that relative entropy can be computed by restricting our observables to live in a generating subalgebra.

 \begin{prop}\label{P;generatohklshklas}
Let $\mathcal{S}_{1},\mathcal{S}_{2}$ be subalgebras (not necessarily sigma-algebras) of $\mathcal{X},$ and let $\mathcal{F}_{1},\mathcal{F}_{2}$ be the smallest, $G$-invariant, complete sub-sigma-algebras of $\mathcal{X}$ containing $\mathcal{S}_{1},\mathcal{S}_{2}.$ Suppose that $\mathcal{T}\supseteq \mathcal{S}_{1}\cup \mathcal{S}_{2}$ is a subalgebra of $\mathcal{X}$ so that $\mathcal{X}$ is the smallest $G$-invariant, complete sub-sigma-algebra of $\mathcal{X}$ containing $\mathcal{T}.$
 \begin{enumerate}[(i)]
 \item For any $\mathcal{T}$-measurable observables $\alpha,\beta$ we have
 \[h_{(\sigma_{i})_{i},\mu}(\alpha|\beta:\mathcal{X})=\inf_{\gamma}h_{(\sigma_{i})_{i},\mu}(\alpha|\beta:\gamma),\]
 where the infimum is over all $\mathcal{T}$-measurable observables $\gamma$ with domain $X$ and $\gamma \geq \alpha\vee \beta.$ \label{I:presenceinf}
 \item For any $\mathcal{T}$-measurable observable $\alpha$ we have
 \[h_{(\sigma_{i})_{i},\mu}(\alpha|\mathcal{F}_{2}:\mathcal{X})=\inf_{\beta}h_{(\sigma_{i})_{i},\mu}(\alpha|\beta:\mathcal{X}),\]
 where the infimum is over all $\mathcal{S}_{2}$-measurable observables $\beta$.\label{I:conditionainf}
 \item We have
 \[h_{(\sigma_{i})_{i},\mu}(\mathcal{F}_{1}|\mathcal{F}_{2}:\mathcal{X})=\sup_{\alpha}h_{(\sigma_{i})_{i},\mu}(\alpha|\mathcal{F}_{2}:\mathcal{X}),\]
 where the supremum is over all $\mathcal{S}_{1}$-measurable observables $\alpha.$\label{I:conditionalsup}
 \end{enumerate}

 \end{prop}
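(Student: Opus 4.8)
The plan is as follows. In each of the three parts one inequality is free and the other carries the content. For (\ref{I:presenceinf}) and (\ref{I:conditionainf}) the right-hand side is an infimum over a nonempty \emph{sub}class of the observables defining $h_{(\sigma_{i})_{i},\mu}(\alpha|\beta:\mathcal X)$, respectively $h_{(\sigma_{i})_{i},\mu}(\alpha|\mathcal F_{2}:\mathcal X)$ (e.g. $\gamma=\alpha\vee\beta$ is $\mathcal T$-measurable, and every $\mathcal S_{2}$-measurable observable is $\mathcal F_{2}$-measurable), so it is automatically $\ge$ the left-hand side; for (\ref{I:conditionalsup}) the right-hand side is a supremum over a subclass and is thus $\le$ the left-hand side. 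I must prove the reverse inequalities, and for all three the mechanism is the same approximation principle: any observable measurable with respect to a $G$-invariantly generated complete $\sigma$-algebra can be approximated, in the Rokhlin metric $H(\,\cdot\,|\,\cdot\,)+H(\,\cdot\,|\,\cdot\,)$, by an observable measurable with respect to $\bigvee_{g\in F}g\mathcal S$ for some finite $F\ni e$, and that in turn is dominated by $\bigvee_{g\in F}g\eta$ for a single observable $\eta$ measurable with respect to the generating subalgebra $\mathcal S$. This is routine (density of the algebra generated by $\bigcup_{g}g\mathcal S$ in the completion of the $\sigma$-algebra it generates, plus the fact that a finite observable is measurable with respect to a finitely generated subalgebra). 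I use it with $\mathcal S=\mathcal S_{1},\mathcal S_{2},\mathcal T$. I will also use freely the three obvious monotonicities, immediate from the definition of $\AP(\cdot|\cdot:\cdot)$: $h_{(\sigma_{i})_{i},\mu}(\,\cdot\,|\,\cdot\,:\,\cdot\,)$ does not decrease if the first observable is refined and does not increase if the conditioning observable or the presence is refined.

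Parts (\ref{I:conditionalsup}) and (\ref{I:conditionainf}) now follow by feeding this approximation into Proposition~\ref{P:basicpropertieslakdhgkl} (applied inside each presence $\gamma$ and each $F,\delta$, then passed to the limit and the defining infima). For (\ref{I:conditionalsup}): given a finite $\mathcal F_{1}$-measurable $\alpha$ and $\varepsilon>0$, pick $\alpha_{0}\le\bigvee_{g\in F}g\eta$ with $\eta$ finite $\mathcal S_{1}$-measurable and $H(\alpha|\alpha_{0})<\varepsilon$; by Proposition~\ref{P:basicpropertieslakdhgkl}(\ref{I:bound2alkghal}) and (\ref{I:bound1lasdfhgl}), $h_{(\sigma_{i})_{i},\mu}(\alpha|\mathcal F_{2}:\mathcal X)\le h_{(\sigma_{i})_{i},\mu}(\alpha_{0}|\mathcal F_{2}:\mathcal X)+\varepsilon\le h_{(\sigma_{i})_{i},\mu}\big(\textstyle\bigvee_{g\in F}g\eta\,\big|\,\mathcal F_{2}:\mathcal X\big)+\varepsilon$ (monotonicity in the first slot), and iterating Proposition~\ref{P:basicpropertieslakdhgkl}(\ref{I:invariance1aldkg}) with the subobservable $\eta\le\eta$ and successive translates, then taking the infimum over $\mathcal F_{2}$-measurable conditioning observables, collapses $h_{(\sigma_{i})_{i},\mu}(\bigvee_{g\in F}g\eta\,|\,\mathcal F_{2}:\mathcal X)=h_{(\sigma_{i})_{i},\mu}(\eta|\mathcal F_{2}:\mathcal X)$; letting $\varepsilon\to0$ gives (\ref{I:conditionalsup}). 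For (\ref{I:conditionainf}): given a finite $\mathcal F_{2}$-measurable $\beta$, pick $\beta_{0}\le\eta':=\bigvee_{g\in F}g\eta$ with $\eta$ finite $\mathcal S_{2}$-measurable, so $H(\beta|\eta')\le H(\beta|\beta_{0})<\varepsilon$; by Proposition~\ref{P:basicpropertieslakdhgkl}(\ref{I:bound3algak}), (\ref{I:this is obvious yo!}) and Proposition~\ref{P:basicpropertiesagainlshdl}(\ref{P:trivialboudnaasdlj}), $h_{(\sigma_{i})_{i},\mu}(\alpha|\eta':\mathcal X)\le h_{(\sigma_{i})_{i},\mu}(\alpha|\beta\vee\eta':\mathcal X)+H(\beta|\eta')\le h_{(\sigma_{i})_{i},\mu}(\alpha|\beta:\mathcal X)+\varepsilon$ (last step by monotonicity in the conditioning slot), and iterating Proposition~\ref{P:basicpropertieslakdhgkl}(\ref{I:invariance2adlghal}) gives $h_{(\sigma_{i})_{i},\mu}(\alpha|\eta':\mathcal X)=h_{(\sigma_{i})_{i},\mu}(\alpha|\eta:\mathcal X)$; since $\eta$ is $\mathcal S_{2}$-measurable and $\varepsilon$ is arbitrary, taking the infimum over $\beta$ finishes (\ref{I:conditionainf}).

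Part (\ref{I:presenceinf}) is the main obstacle: the same scheme stalls because Proposition~\ref{P:basicpropertieslakdhgkl} provides $G$-invariance and subadditivity for the first and second observable slots but not for the presence slot. So I need two companion facts about the presence: (A) $h_{(\sigma_{i})_{i},\mu}(\alpha|\beta:\gamma\vee g\gamma_{0})=h_{(\sigma_{i})_{i},\mu}(\alpha|\beta:\gamma)$ whenever $\gamma_{0}\le\gamma$ and $g\in G$ — hence, iterating, $h_{(\sigma_{i})_{i},\mu}(\alpha|\beta:\bigvee_{g\in F}g\zeta)=h_{(\sigma_{i})_{i},\mu}(\alpha|\beta:\zeta)$ for any $\zeta\ge\alpha\vee\beta$ — and (B) a continuity estimate $h_{(\sigma_{i})_{i},\mu}(\alpha|\beta:\gamma')\le h_{(\sigma_{i})_{i},\mu}(\alpha|\beta:\gamma'\vee\gamma)+H(\gamma|\gamma')$ for $\gamma,\gamma'\ge\alpha\vee\beta$. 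Granting (A) and (B): given a finite $\mathcal X$-measurable $\gamma\ge\alpha\vee\beta$ and $\varepsilon>0$, apply the approximation lemma to $\gamma$ with $\mathcal S=\mathcal T$ and, after replacing the resulting $\mathcal S_{1}$-style observable by its join with $\alpha\vee\beta$ (still $\mathcal T$-measurable, since $\alpha,\beta$ are $\mathcal T$-measurable by hypothesis) and ensuring $e\in F$, obtain a finite $\mathcal T$-measurable $\zeta\ge\alpha\vee\beta$ and finite $F$ with $\gamma_{0}\le\zeta':=\bigvee_{g\in F}g\zeta$, $\zeta'\ge\alpha\vee\beta$, and $H(\gamma|\zeta')\le H(\gamma|\gamma_{0})<\varepsilon$. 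Then $h_{(\sigma_{i})_{i},\mu}(\alpha|\beta:\zeta)\overset{(A)}{=}h_{(\sigma_{i})_{i},\mu}(\alpha|\beta:\zeta')\overset{(B)}{\le}h_{(\sigma_{i})_{i},\mu}(\alpha|\beta:\zeta'\vee\gamma)+\varepsilon\le h_{(\sigma_{i})_{i},\mu}(\alpha|\beta:\gamma)+\varepsilon$, the last step by monotonicity in the presence; since $\zeta$ is $\mathcal T$-measurable with $\zeta\ge\alpha\vee\beta$ and $\varepsilon\to0$, the infimum over $\mathcal T$-measurable presences is $\le h_{(\sigma_{i})_{i},\mu}(\alpha|\beta:\gamma)$ for every $\mathcal X$-measurable $\gamma$, and taking the infimum over the latter gives (\ref{I:presenceinf}).

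Finally, (A) and (B). Fact (A) is proved by mimicking the proof of Proposition~\ref{P:basicpropertieslakdhgkl}(\ref{I:invariance1aldkg}): a microstate $\phi$ for $\gamma$ relative to a large $F'\supseteq F\cup Fg$ and small $\delta'$ is extended to a microstate $\phi'$ for $\gamma\vee g\gamma_{0}$ relative to $(F,\delta)$ by appending the coordinate $j\mapsto(\rho_{\gamma_{0}}\circ\phi)(\sigma_{i}(g)^{-1}(j))$; the two defining properties of the sofic approximation (near-multiplicativity on $F'$, so that $\sigma_{i}(g)^{-1}\sigma_{i}(h)^{-1}$ agrees with $\sigma_{i}(hg)^{-1}$ off an $o(1)$-fraction of $\{1,\dots,d_{i}\}$) ensure $\phi'\in\AP(\gamma\vee g\gamma_{0},F,\delta)$ with its $\alpha$- and $\beta$-coordinates unchanged, and conversely a microstate for $\gamma\vee g\gamma_{0}$ restricts to one for $\gamma$, so the conditional counts match up to controlled changes of $F,\delta$. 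Fact (B) is a relative Stirling estimate in the spirit of Proposition~\ref{P:Stirling!}: each element of $\AP(\alpha|\psi:\gamma',F,\delta)$ is witnessed by a microstate $\phi'$ for $\gamma'$, the number of microstates for $\gamma'\vee\gamma$ lying over a typical such $\phi'$ is at most $e^{d_{i}(H(\gamma|\gamma')+o(1))}$ (bounding the ways to fill in the $\gamma$-coordinate given the $\gamma'$-coordinate by the relative entropy), while every typical $\phi'$ has at least one such lift; pushing this through the conditional counts and the supremum over $\psi$ and taking $\tfrac1{d_{i}}\log\limsup$ yields (B). I expect (B), and with it part (\ref{I:presenceinf}), to be the delicate point: a crude ``$\gamma$ is almost a function of $\gamma'$'' total-variation bound produces an error proportional to $|F|$, which diverges along the infimum over finite $F\subseteq G$, so one genuinely needs the entropic (relative-Stirling) count to keep the slack at $H(\gamma|\gamma')$ uniformly in $F$.
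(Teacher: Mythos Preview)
Your treatment of (\ref{I:conditionainf}) and (\ref{I:conditionalsup}) is correct and essentially identical to the paper's: approximate a general $\mathcal{F}_{2}$- (resp.\ $\mathcal{F}_{1}$-) observable by a join of translates of an $\mathcal{S}_{2}$- (resp.\ $\mathcal{S}_{1}$-) observable, absorb the approximation error using Proposition~\ref{P:basicpropertieslakdhgkl}(\ref{I:bound1lasdfhgl}),(\ref{I:bound2alkghal}),(\ref{I:bound3algak}), and collapse the translates via (\ref{I:invariance1aldkg}),(\ref{I:invariance2adlghal}).

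For (\ref{I:presenceinf}), however, your route diverges from the paper's and your fact (B) has a real gap. The claim ``every typical $\phi'$ has at least one such lift'' is exactly the model--surjectivity problem: in the sofic (non-amenable) setting there is no reason a microstate for $\gamma'$ should extend to one for $\gamma'\vee\gamma$, even when $H(\gamma|\gamma')$ is small. Your Stirling bound caps the number of lifts \emph{above}, but the argument needs at least one lift per witness $\phi'$ to push the count through, and that existence is not supplied. In the extreme case where $\AP(\gamma'\vee\gamma,F,\delta,\sigma_{i})$ is eventually empty while $\AP(\gamma',F,\delta,\sigma_{i})$ is not, the right side of (B) is $-\infty$ and the inequality fails outright. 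Your own caveat about the ``$|F|$-proportional'' error is a symptom of this: the obstruction is not the crudeness of a total-variation bound but the lifting itself.

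The paper avoids (A) and (B) entirely and argues at the level of $\alpha\vee\beta$-projections. Given $\gamma',F',\delta'$, it uses that $\mathcal{T}$ $G$-generates $\mathcal{X}$ to produce a single $\mathcal{T}$-measurable $\gamma_{0}\ge\alpha\vee\beta$, a finite $F$, and a $\delta>0$ with
\[
\AP(\alpha\vee\beta:\gamma_{0},F,\delta,\sigma_{i})\ \subseteq\ \AP(\alpha\vee\beta:\gamma',F',\delta',\sigma_{i})
\]
for all large $i$: approximate $\gamma'$ by a function of $\gamma_{0}^{F_{0}}$ and \emph{construct} the $\gamma'$-lift of any $\gamma_{0}$-microstate by reading off its $F_{0}$-orbit under $\sigma_{i}$. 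This inclusion immediately gives $\AP(\alpha|\psi:\gamma_{0},F,\delta,\sigma_{i})\subseteq\AP(\alpha|\psi:\gamma',F',\delta',\sigma_{i})$ for every relevant $\psi$, hence $\inf_{\gamma\ \mathcal{T}\text{-meas.}}h(\alpha|\beta:\gamma)\le h(\alpha|\beta:\gamma',F',\delta')$, and infimizing over $\gamma',F',\delta'$ finishes. The point is that the lift is built from the sofic approximation and the generator hypothesis, not assumed to exist by a counting argument; this is what your sketch of (B) is missing.
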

\begin{proof}
(\ref{I:presenceinf}): Fix a measurable observable $\gamma',$ a finite $F'\subseteq G,$ and a $\delta'>0.$ Since $\mathcal{X}$ is the smallest $G$-invariant, complete  sub-sigma-algebra containing $\mathcal{T},$ we may find a $\mathcal{T}$-measurable observable $\gamma_{0}$ with $\gamma_{0}\geq \alpha\vee \beta,$ a finite $F\subseteq G,$ and a $\delta>0$ so that for all large $i,$
\[\AP(\alpha\vee \beta:\gamma_{0},F,\delta,\sigma_{i})\subseteq \AP(\alpha\vee \beta:\gamma',F',\delta',\sigma_{i}).\]
So for every $\psi\in \AP(\beta:\gamma_{0},F,\delta,\sigma_{i})$ and  all sufficiently large $i,$
\[\AP(\alpha|\psi:\gamma_{0},F,\delta,\sigma_{i})\subseteq\AP(\alpha|\psi:\gamma',F',\delta',\sigma_{i}).\]
Thus,
\[h_{(\sigma_{i})_{i},\mu}(\alpha|\beta:\gamma_{0})\leq h_{(\sigma_{i})_{i},\mu}(\alpha|\beta:\gamma_{0},F,\delta,\sigma_{i})\leq h_{(\sigma_{i})_{i},\mu}(\alpha|\beta:\gamma',F',\delta',\sigma_{i}).\]
So
\[\inf_{\gamma}h_{(\sigma_{i})_{i},\mu}(\alpha|\beta:\gamma)\leq h_{(\sigma_{i})_{i},\mu}(\alpha|\beta:\gamma',F',\delta',\sigma_{i}),\]
where the infimum is over all finite $\mathcal{T}$-measurable observables $\gamma.$ Infimizing over $\gamma',F',\delta'$ proves $(i).$

(\ref{I:conditionainf}): It is clear that
\[\inf_{\beta}h_{(\sigma_{i})_{i},\mu}(\alpha|\beta:\mathcal{X})\geq h_{(\sigma_{i})_{i},\mu}(\alpha|\mathcal{F}_{2}:\mathcal{X})\]
where the infimum is over all $\mathcal{S}_{2}$-measurable observables $\beta.$ To prove the reverse inequality, fix an $\varepsilon>0$ and a finite $\mathcal{F}_{2}$-measurable observable $\beta'\colon X\to B'.$ Since $\mathcal{S}_{2}$ generates $\mathcal{F}_{2},$ we may find a finite  $F\subseteq G$ containing the identity, and an $\mathcal{S}_{2}$-measurable observable $\beta_{0}\colon X\to B_{0}$ such that
\[H\left(\beta'\bigg|\bigvee_{g\in F}g\beta_{0}\right)<\varepsilon.\]
By Proposition \ref{P:basicpropertieslakdhgkl} (\ref{I:bound1lasdfhgl}) and (\ref{I:bound3algak}),
\[h_{(\sigma_{i})_{i},\mu}(\alpha|\beta':\mathcal{X})\geq -\varepsilon+h_{(\sigma_{i})_{i},\mu}\left(\alpha\bigg| \bigvee_{g\in F}g\beta_{0}:\mathcal{X}\right).\]
By repeated applications of Proposition \ref{P:basicpropertieslakdhgkl}  (\ref{I:invariance2adlghal}), it follows that
\[h_{(\sigma_{i})_{i},\mu}\left(\alpha\bigg| \bigvee_{g\in F}g\beta_{0}:\mathcal{X}\right)=h_{(\sigma_{i})_{i},\mu}(\alpha|\beta_{0}:\mathcal{X}).\]
So
\[h_{(\sigma_{i})_{i},\mu}(\alpha|\beta':\mathcal{X})\geq -\varepsilon+h_{(\sigma_{i})_{i},\mu}(\alpha|\beta_{0}:\mathcal{X})\geq -\varepsilon+\inf_{\beta}h_{(\sigma_{i})_{i},\mu}(\alpha|\beta:\mathcal{X}),\]
where the infimum is over all $\mathcal{S}_{2}$-measurable observables $\beta.$ Letting $\varepsilon\to 0$ and taking the infimum over $\beta'$ completes the proof.

(\ref{I:conditionalsup}): This is proved in the same way as (\ref{I:conditionainf}) using Proposition \ref{P:basicpropertieslakdhgkl} (\ref{I:invariance1aldkg}) .

\end{proof}

\begin{prop}\label{P:basicpropertiessubalgsalkjalk}
Fix a $G$-invariant sub-sigma-algebra $\mathcal{F}\subseteq\mathcal{X}.$ The following properties of relative entropy in the presence of $\mathcal{X}$ hold:
\begin{enumerate}[(i)]
\item If $\mathcal{G}_{j},j=1,2$ are two $G$-invariant sub-sigma-algebras of $\mathcal{X},$ then
\[h_{(\sigma_{i})_{i},\mu}(\mathcal{G}_{1}\vee \mathcal{G}_{2}|\mathcal{F}:\mathcal{X})\leq h_{(\sigma_{i})_{i},\mu}(\mathcal{G}_{1}|\mathcal{F}:\mathcal{X})+h_{(\sigma_{i})_{i},\mu}(\mathcal{G}_{2}|\mathcal{F}:\mathcal{X}).\]\label{I:subaddalsdhgklahbasic}
\item If  $\mathcal{G}_{n}$ are an increasing sequence of sub-sigma-algebras of $\mathcal{X}$ and $\mathcal{G}$ is the sigma-algebra generated by their union, then
\[ h_{(\sigma_{i})_{i},\mu}(\mathcal{G}|\mathcal{F}:\mathcal{X})\leq \liminf_{n\to\infty}h_{(\sigma_{i})_{i},\mu}(\mathcal{G}_{n}|\mathcal{F}:\mathcal{X}).\]\label{P:inverselimitsalsjlkja}
\item If $\mathcal{F}',\mathcal{G}$ are $G$-invariant sub-sigma-algebras of $\mathcal{X}$ with $\mathcal{F}'\supseteq \mathcal{F},$ then
\[h_{(\sigma_{i})_{i},\mu}(\mathcal{G}|\mathcal{F}':\mathcal{X})\leq h_{(\sigma_{i})_{i},\mu}(\mathcal{G}|\mathcal{F}:\mathcal{X}).\]\label{P''monotonedecreaseaslJHlkja}
\item  If $\mathcal{G},\mathcal{G}'$ are $G$-invariant sub-sigma-algebras of $\mathcal{X}$ with $\mathcal{G}'\supseteq \mathcal{G},$ then
\[h_{(\sigma_{i})_{i},\mu}(\mathcal{G}|\mathcal{F}:\mathcal{X})\leq h_{(\sigma_{i})_{i},\mu}(\mathcal{G}'|\mathcal{F}:\mathcal{X}).\]\label{P:montoneincreaseslahdg}
\item If $\mathcal{G}$ is a $G$-invariant sub-sigma-algebra of $\mathcal{X},$ then
\[h_{(\sigma_{i})_{i},\mu}(\mathcal{G}|\mathcal{F}:\mathcal{X})=h_{(\sigma_{i})_{i},\mu}(\mathcal{G\vee \mathcal{F}}|\mathcal{F}:\mathcal{X}).\]
\label{I:again obvious}

\end{enumerate}

\end{prop}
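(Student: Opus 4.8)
The plan is to deduce every item from the observable-level inequalities of Proposition~\ref{P:basicpropertieslakdhgkl}, their sigma-algebra consequences in Proposition~\ref{P:basicpropertiesagainlshdl}, the reduction to a generating subalgebra in Proposition~\ref{P;generatohklshklas}, and the elementary monotonicity of $h_{(\sigma_{i})_{i},\mu}(\alpha|\beta:\gamma,F,\delta)$ under enlarging $F$, shrinking $\delta$, refining $\gamma$, and refining $\beta$; each monotonicity is immediate from the corresponding inclusion or pushforward of approximation sets (for instance a microstate for a refinement $\gamma'$ of $\gamma$ pushes forward to a microstate for $\gamma$, which only shrinks the relevant $\AP$-sets, so $\AP(\alpha|\psi:\gamma',F,\delta,\sigma_{i})\subseteq\AP(\alpha|\psi:\gamma,F,\delta,\sigma_{i})$). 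Items (iii), (iv), (v) are purely formal: (iv) holds because $h_{(\sigma_{i})_{i},\mu}(\mathcal{G}|\mathcal{F}:X)$ is a supremum over finite $\mathcal{G}$-measurable observables and passing to $\mathcal{G}'\supseteq\mathcal{G}$ enlarges this index set; (iii) holds because $h_{(\sigma_{i})_{i},\mu}(\alpha|\mathcal{F}':X)$ is an infimum over finite $\mathcal{F}'$-measurable observables and passing to $\mathcal{F}'\supseteq\mathcal{F}$ enlarges that index set; (v), as printed, is a tautology, and in the form presumably intended, namely $h_{(\sigma_{i})_{i},\mu}(\mathcal{G}\vee\mathcal{F}|\mathcal{F}:X)=h_{(\sigma_{i})_{i},\mu}(\mathcal{G}|\mathcal{F}:X)$, the inequality $\geq$ is (iv) while $\leq$ follows from (i) below together with $h_{(\sigma_{i})_{i},\mu}(\mathcal{F}|\mathcal{F}:X)\leq0$, the latter because for $\mathcal{F}$-measurable $\alpha$ one may take $\beta=\alpha$ in the definition, and then $\AP(\alpha|\psi:\gamma,F,\delta,\sigma_{i})$ has at most one element.

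For (ii), I would apply Proposition~\ref{P;generatohklshklas}(iii) with the generating algebra $\bigcup_{n}\mathcal{G}_{n}$, rewriting $h_{(\sigma_{i})_{i},\mu}(\mathcal{G}|\mathcal{F}:X)$ as the supremum of $h_{(\sigma_{i})_{i},\mu}(\alpha|\mathcal{F}:X)$ over finite observables $\alpha$ measurable with respect to $\bigcup_{n}\mathcal{G}_{n}$. Each such $\alpha$ has finitely many level sets, each lying in some $\mathcal{G}_{n_{k}}$, so $\alpha$ is $\mathcal{G}_{n}$-measurable for $n=\max_{k}n_{k}$, whence $h_{(\sigma_{i})_{i},\mu}(\alpha|\mathcal{F}:X)\leq h_{(\sigma_{i})_{i},\mu}(\mathcal{G}_{n}|\mathcal{F}:X)$. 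By (iv) the sequence $n\mapsto h_{(\sigma_{i})_{i},\mu}(\mathcal{G}_{n}|\mathcal{F}:X)$ is non-decreasing, so its $\liminf$ equals its supremum, and taking the supremum over $\alpha$ gives the bound. (Alternatively one approximates $\alpha$ by a finite $\mathcal{G}_{n}$-measurable observable with $H(\alpha|\mathcal{G}_{n})\to0$ and invokes Proposition~\ref{P:basicpropertiesagainlshdl}(iii).)

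Item (i) carries the content. By Proposition~\ref{P;generatohklshklas}(iii) applied to the algebra generated by $\mathcal{G}_{1}\cup\mathcal{G}_{2}$, it suffices to bound $h_{(\sigma_{i})_{i},\mu}(\alpha|\mathcal{F}:X)$ for $\alpha$ measurable with respect to that algebra; collecting the finitely many $\mathcal{G}_{1}$- and $\mathcal{G}_{2}$-sets occurring in the level sets of $\alpha$ produces finite $\mathcal{G}_{j}$-measurable observables $\alpha_{j}$ with $\alpha\leq\alpha_{1}\vee\alpha_{2}$, and Proposition~\ref{P:basicpropertieslakdhgkl} then gives $h_{(\sigma_{i})_{i},\mu}(\alpha|\mathcal{F}:X)\leq h_{(\sigma_{i})_{i},\mu}(\alpha_{1}\vee\alpha_{2}|\mathcal{F}:X)$. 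So everything reduces to
\[h_{(\sigma_{i})_{i},\mu}(\alpha_{1}\vee\alpha_{2}|\mathcal{F}:X)\leq h_{(\sigma_{i})_{i},\mu}(\alpha_{1}|\mathcal{F}:X)+h_{(\sigma_{i})_{i},\mu}(\alpha_{2}|\mathcal{F}:X).\]
The key step, proved directly from the definitions, is the finitary inequality: for every finite $\mathcal{F}$-measurable $\beta$, every $\gamma\geq\alpha_{1}\vee\alpha_{2}\vee\beta$, every finite $F\subseteq G$ and $\delta>0$, and every $\psi\in\AP(\beta:\gamma,F,\delta,\sigma_{i})$,
\[|\AP(\alpha_{1}\vee\alpha_{2}|\psi:\gamma,F,\delta,\sigma_{i})|\leq|\AP(\alpha_{1}|\psi:\gamma,F,\delta,\sigma_{i})|\cdot|\AP(\alpha_{2}|\psi:\gamma,F,\delta,\sigma_{i})|,\]
since a pair $(\phi_{1},\phi_{2})$ in the left-hand set has $\phi_{1}\in\AP(\alpha_{1}|\psi:\gamma,F,\delta,\sigma_{i})$ and, for each fixed $\phi_{1}$, all its partners $\phi_{2}$ lie in $\AP(\alpha_{2}|\psi:\gamma,F,\delta,\sigma_{i})$. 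Taking $\sup_{\psi}$, $\tfrac{1}{d_{i}}\log$, and $\limsup_{i}$, and using subadditivity of $\limsup$ and the bound $\sup(xy)\leq(\sup x)(\sup y)$ for non-negative functions, yields $h_{(\sigma_{i})_{i},\mu}(\alpha_{1}\vee\alpha_{2}|\beta:\gamma,F,\delta)\leq h_{(\sigma_{i})_{i},\mu}(\alpha_{1}|\beta:\gamma,F,\delta)+h_{(\sigma_{i})_{i},\mu}(\alpha_{2}|\beta:\gamma,F,\delta)$. To pass to the infima defining $h_{(\sigma_{i})_{i},\mu}(\cdot|\mathcal{F}:X)$ while keeping the additive form, I would fix $\varepsilon>0$, choose $(\beta_{j},\gamma_{j},F_{j},\delta_{j})$ almost realizing $h_{(\sigma_{i})_{i},\mu}(\alpha_{j}|\mathcal{F}:X)$ for $j=1,2$, and run the inequality with $\beta=\beta_{1}\vee\beta_{2}$, $\gamma=\gamma_{1}\vee\gamma_{2}\vee\alpha_{1}\vee\alpha_{2}\vee\beta$, $F=F_{1}\cup F_{2}$, $\delta=\min(\delta_{1},\delta_{2})$; the monotonicity facts give $h_{(\sigma_{i})_{i},\mu}(\alpha_{j}|\beta:\gamma,F,\delta)\leq h_{(\sigma_{i})_{i},\mu}(\alpha_{j}|\beta_{j}:\gamma_{j},F_{j},\delta_{j})<h_{(\sigma_{i})_{i},\mu}(\alpha_{j}|\mathcal{F}:X)+\varepsilon$, and $\varepsilon\to0$ finishes the displayed inequality; the supremum over $\alpha_{1},\alpha_{2}$ then gives the stated bound with $h_{(\sigma_{i})_{i},\mu}(\mathcal{G}_{j}|\mathcal{F}:X)$ on the right.

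The only genuine obstacle is organizational: one must isolate the four monotonicity properties of $h_{(\sigma_{i})_{i},\mu}(\alpha|\beta:\gamma,F,\delta)$ cleanly enough that a single simultaneous choice of $(\beta,\gamma,F,\delta)$ serves both $\alpha_{1}$ and $\alpha_{2}$; this is what converts the finitary ``conditional subadditivity'' into subadditivity of the fully infimized quantity. None of these monotonicity statements is deep---each is an inclusion or a pushforward of $\AP$-sets, exactly in the spirit of the manipulations already used in the proof of Proposition~\ref{P;generatohklshklas}---but they must be stated before the infima are interchanged, and care is needed because refining $\beta$ forces one to refine $\gamma$ as well to keep $\gamma\geq\alpha\vee\beta$.
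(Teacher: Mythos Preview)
Your argument is correct and tracks the paper's proof closely. For (iii), (iv) the paper likewise dismisses them as exercises in the definitions; for (v) the paper invokes Proposition~\ref{P:basicpropertieslakdhgkl}(\ref{I:this is obvious yo!}) together with Proposition~\ref{P;generatohklshklas}(\ref{I:conditionalsup}) directly, whereas you deduce it from (i) and (iv)---both routes are fine, and you are right that the displayed statement is a typo for $h_{(\sigma_{i})_{i},\mu}(\mathcal{G}\vee\mathcal{F}|\mathcal{F}:\mathcal{X})=h_{(\sigma_{i})_{i},\mu}(\mathcal{G}|\mathcal{F}:\mathcal{X})$.

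For (ii) the paper uses your parenthetical alternative (Proposition~\ref{P:basicpropertiesagainlshdl}(\ref{P:trivial bound 2 alksjlaj}) with $H(\alpha|\mathcal{G}_{n})\to 0$), whereas your primary route via Proposition~\ref{P;generatohklshklas}(\ref{I:conditionalsup}) applied to the algebra $\bigcup_{n}\mathcal{G}_{n}$ is a legitimate and slightly cleaner shortcut.

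For (i) your argument is essentially the paper's, but you are more explicit where the paper is terse: the paper writes
\[h_{(\sigma_{i})_{i},\mu}(\alpha_{1}\vee\alpha_{2}|\beta:\gamma)\leq h_{(\sigma_{i})_{i},\mu}(\alpha_{1}|\beta:\gamma)+h_{(\sigma_{i})_{i},\mu}(\alpha_{2}|\beta:\gamma)\leq h_{(\sigma_{i})_{i},\mu}(\alpha_{1}|\beta:\mathcal{X})+h_{(\sigma_{i})_{i},\mu}(\alpha_{2}|\beta:\mathcal{X}),\]
which at first glance looks like it goes the wrong way (the right-hand terms are infima over $\gamma$). The implicit justification is exactly the monotonicity in $\gamma$ you spell out: since $\gamma\mapsto h_{(\sigma_{i})_{i},\mu}(\alpha_{j}|\beta:\gamma)$ is decreasing along refinements and the index set is directed, the infimum of the sum equals the sum of the infima. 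Your $\varepsilon$-argument with a common $(\beta,\gamma,F,\delta)$ is the same observation unpacked. So there is no divergence in substance---you have simply made the monotonicity lemmas explicit where the paper leaves them to the reader.
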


\begin{proof}

(i). Fix finite $\mathcal{G}_{j}$-measurable observables $\alpha_{j}$ for $j=1,2.$
 Let $\beta_{j},j=1,2$ be finite $\mathcal{F}$-measurable observables and $\gamma$ a  finite $\mathcal{X}$-measurable observable so that $\gamma\geq \alpha_{1}\vee \alpha_{2}\vee \beta_{1}\vee \beta_{2}.$ Then
\begin{align*}
h_{(\sigma_{i})_{i},\mu}(\alpha|\mathcal{F}:\mathcal{X})\leq h_{(\sigma_{i})_{i},\mu}(\alpha_{1}\vee \alpha_{2}|\beta_{1}\vee \beta_{2}:\gamma)&\leq h_{(\sigma_{i})_{i},\mu}(\alpha_{1}|\beta_{1}\vee \beta_{2}:\gamma)
+h_{(\sigma_{i})_{i},\mu}(\alpha_{2}|\beta_{1}\vee\beta_{2}:\gamma)\\
&\leq h_{(\sigma_{i})_{i},\mu}(\alpha_{1}|\beta_{1}:\gamma)+h_{(\sigma_{i})_{i},\mu}(\alpha_{2}|\beta_{1}:\gamma).
\end{align*}
Infimizing over $\gamma,\beta_{1},\beta_{2},$
\[h_{(\sigma_{i})_{i},\mu}(\alpha_{1}\vee \alpha_{2}|\mathcal{F}:\mathcal{X})\leq  h_{(\sigma_{i})_{i},\mu}(\alpha_{1}|\mathcal{F}:\mathcal{X})+h_{(\sigma_{i})_{i},\mu}(\alpha_{2}|\mathcal{F}:\mathcal{X}).\]
Since  $\mathcal{G}_{1}\vee \mathcal{G}_{2}$ is generated by the set of all  observables of the form $\alpha_{1}\vee \alpha_{2}$ where $\alpha_{j},j=1,2$ are any finite, $\mathcal{G}_{j}$-measurable observables, the proof is completed by invoking Proposition \ref{P;generatohklshklas} (\ref{I:conditionalsup}).

(ii). Suppose $\alpha$ is a finite $\mathcal{G}$-measurable observable. Applying Proposition \ref{P:basicpropertiesagainlshdl} (\ref{P:trivial bound 2 alksjlaj}) we deduce that
\[h_{(\sigma_{i})_{i},\mu}(\alpha|\mathcal{F}:\mathcal{X})\leq h_{(\sigma_{i})_{i},\mu}(\mathcal{G}_{n}|\mathcal{F}:\mathcal{X})+H(\alpha|\mathcal{G}_{n}).\]
Letting $n\to\infty$ we see that
\[h_{(\sigma_{i})_{i},\mu}(\alpha|\mathcal{F}:\mathcal{X})\leq \liminf_{n\to\infty}h_{(\sigma_{i})_{i},\mu}(\mathcal{G}_{n}|\mathcal{F}:\mathcal{X}).\]
Now taking the supremum over $\alpha$ completes the proof.

(iii) and (iv). These are exercises in understanding the definitions.

(\ref{I:again obvious}): This is automatic from Proposition \ref{P:basicpropertieslakdhgkl} (\ref{I:this is obvious yo!}) and Proposition \ref{P;generatohklshklas} (\ref{I:conditionalsup}).

\end{proof}

Having established these general properties of relative entropy, we can show the existence of the relative outer Pinsker factor.

\begin{cor}\label{C:defnpinskfact}
 Fix a $G$-invariant sub-sigma-algebra $\mathcal{F}\subseteq\mathcal{X}.$ Let
\[\Pi=\{A\in \mathcal{X}:h_{(\sigma_{i})_{i},\mu}(\chi_{A}|\mathcal{F}:\mathcal{X})\leq 0\},\]
where we regard  $\chi_{A}$ as a map $X\to \{0,1\}$.
Then $\Pi$ is the unique, complete, maximal, $G$-invariant sub-sigma-algebra of $\mathcal{X}$ containing $\mathcal{F}$ with
\[h_{(\sigma_{i})_{i},\mu}(\Pi|\mathcal{F}:\mathcal{X})\leq 0.\]
\end{cor}
\begin{proof}
We first show that $\Pi$ is a $G$-invariant sub-sigma-algebra containing $\mathcal{F}$ and that $h_{(\sigma_{i})_{i},\mu}(\Pi|\mathcal{F}:\mathcal{X})\leq 0.$ Given $A\in \mathcal{X},$ let $S_{A}=\{\varnothing, A,A^{c},X\},$ and let $\mathcal{G}_{A}$ be the smallest $G$-invariant sub-sigma-algebra generated by $S_{A}.$ If $\omega\colon X\to B$ is a finite $S_{A}$-measurable observable, then $\omega$ is constant on $A,A^{c},$ and so takes on at most $2$ values. Thus $\omega\leq \chi_{A},$ and so $h_{(\sigma_{i})_{i}}(\omega|\mathcal{F}:\mathcal{X})\leq h_{(\sigma_{i})_{i}}(\alpha|\mathcal{F}:\mathcal{X}).$ Hence by Proposition \ref{P;generatohklshklas} (\ref{I:conditionalsup}),
\[h_{(\sigma_{i})_{i},\mu}(\chi_{A}|\mathcal{F}:\mathcal{X})=h_{(\sigma_{i})_{i},\mu}(\mathcal{G}_{A}|\mathcal{F}:\mathcal{X}).\]
From this, it is easy to derive that $\Pi$ is a $G$-invariant sub-sigma-algebra  containing $\mathcal{F}$ as follows. First, the fact that $\Pi$ is closed under complements and that $\Pi\supseteq \mathcal{F}$ is tautological. The fact that $\Pi$ is $G$-invariant follows from the fact that $\mathcal{G}_{A}=\mathcal{G}_{gA}$ for $g\in G.$ Lastly, suppose that $(A_{n})_{n=1}^{\infty}$ is a sequence of sets in $\Pi,$ and let
\[A=\bigcup_{n=1}^{\infty}A_{n}.\]
Observe that
\[\mathcal{G}_{A}\leq \bigvee_{n=1}^{\infty}\mathcal{G}_{A_{n}}.\]
From Proposition \ref{P:basicpropertiessubalgsalkjalk} (\ref{I:subaddalsdhgklahbasic}), (\ref{P:inverselimitsalsjlkja}), (\ref{P:montoneincreaseslahdg}) we have:
\begin{align*}
h_{(\sigma_{i})_{i},\mu}(\mathcal{G}_{A}|\mathcal{F}:\mathcal{X})&\leq \liminf_{n\to\infty}h_{(\sigma_{i})_{i},\mu}\left(\bigvee_{j=1}^{n}\mathcal{G}_{A_{j}}|\mathcal{F}:\mathcal{X}\right)\\
&\leq \liminf_{n\to\infty}\sum_{j=1}^{n}h_{(\sigma_{i})_{i},\mu}(\mathcal{G}_{A_{j}}|\mathcal{F}:\mathcal{X})\\
&\leq 0.
\end{align*}
We now show that $h_{(\sigma_{i})_{i},\mu}(\Pi|\mathcal{F}:\mathcal{X})\leq 0.$ Since $(X,\mathcal{X},\mu)$ is Lebesgue, we can find a countable collection of sets $A_{n}\in \Pi$ so that $\Pi=\bigvee_{n=1}^{\infty}\mathcal{G}_{A_{n}}.$ Using  Proposition \ref{P:basicpropertiessubalgsalkjalk}    (\ref{I:subaddalsdhgklahbasic}), (\ref{P:inverselimitsalsjlkja}), (\ref{P:montoneincreaseslahdg})   again shows that $h_{(\sigma_{i})_{i},\mu}(\Pi|\mathcal{F}:\mathcal{X})\leq 0.$

Now suppose that $\mathcal{G}$ is another $G$-invariant, sub-sigma-algebra of $\mathcal{X}$ containing $\mathcal{F}$ with $h_{(\sigma_{i})_{i},\mu}(\mathcal{G}|\mathcal{F}:\mathcal{X})\leq 0.$ For any $A\in \mathcal{G},$ we have that $\mathcal{G}_{A}\subseteq \mathcal{G}.$ Hence by Proposition \ref{P:basicpropertiessubalgsalkjalk} (\ref{P:montoneincreaseslahdg}),
\[h_{(\sigma_{i})_{i},\mu}(\mathcal{G}_{A}|\mathcal{F}:\mathcal{X})\leq h_{(\sigma_{i})_{i},\mu}(\mathcal{G}|\mathcal{F}:\mathcal{X})\leq 0.\]
So $A\in \Pi,$ and since $A$ was an arbitrary element of $\mathcal{G},$ we have that $\mathcal{G}\subseteq \Pi.$
\end{proof}

We will call $\Pi$ defined in Corollary \ref{C:defnpinskfact} the \emph{outer Pinsker sigma-algebra of $X$ relative to $\mathcal{F}$} with respect to $(\sigma_{i})_{i}.$ Typically we will drop ``with respect to $(\sigma_{i})_{i}$" if the sofic approximation is clear from the context. Note that if $\mathcal{F}=\{A\subseteq X:\mu(X\setminus A)=0 \mbox{ or } \mu(A)=0\},$ then $\Pi$ coincides with the outer Pinsker sigma-algebra, which we will denote by $\Pi_{(\sigma_{i})_{i}}(\mu)$. We denote the outer Pinsker sigma-algebra of $X$ relative to $\mathcal{F}$ by $\Pi_{(\sigma_{i})_{i}}(\mu|\mathcal{F})$. If $\Pi_{(\sigma_{i})_{i}}(\mu)$ is the algebra of sets which are null or conull, then we say that $G\cc (X,\mu)$ has \emph{completely positive measure-theoretic entropy in the presence} (with respect to $(\sigma_{i})_{i}$). The following are some of the most important properties of the outer Pinsker algebra.

\begin{prop}\label{P:Pinskermonotone}
We have the following properties of outer Pinsker factors.
 \begin{enumerate}[(i)]
  \item Suppose that $G\cc (Y,\mathcal{Y},\nu)$ is a factor of $G\cc (X,\mathcal{X},\mu),$ and identify $\mathcal{Y}\subseteq \mathcal{X}.$ Then for any complete, $G$-invariant sub-sigma-algebra $\mathcal{F}\subseteq \mathcal{Y}$ we have
\[\Pi_{(\sigma_{i})_{i}}(\nu|\mathcal{F})\subseteq \Pi_{(\sigma_{i})_{i}}(\mu|\mathcal{F}).\]\label{I:Pinksermonotoneextension}
\item If $\mathcal{F}\subseteq \mathcal{G}$ are $G$-invariant, complete sub-sigma-algebras of $\mathcal{X},$ then
\[\Pi_{(\sigma_{i})_{i}}(\mu|\mathcal{F})\subseteq \Pi_{(\sigma_{i})_{i}}(\mu|\mathcal{G}).\]\label{I:Pinksermonotoneinclusion}
\end{enumerate}

\end{prop}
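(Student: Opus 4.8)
The plan is to prove both monotonicity statements of Proposition~\ref{P:Pinskermonotone} by reducing them, via the characterization of the relative outer Pinsker sigma-algebra in Corollary~\ref{C:defnpinskfact} as the collection of sets $A$ with $h_{(\sigma_{i})_{i},\mu}(\chi_{A}|\mathcal{F}:\mathcal{X})\leq 0$, to the monotonicity properties of relative entropy established in Proposition~\ref{P:basicpropertiessubalgsalkjalk}. The key observation is that each of the two claims is an instance of ``shrinking the ambient algebra $\mathcal{X}$ or the base algebra $\mathcal{F}$ can only increase the value of the entropy functional, hence can only shrink the Pinsker algebra''—except that here we want the reverse inclusion, so we must be careful about which direction each change goes.

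For part~(\ref{I:Pinksermonotoneinclusion}), suppose $\mathcal{F}\subseteq\mathcal{G}$ and take $A\in\Pi_{(\sigma_{i})_{i}}(\mu|\mathcal{F})$, so $h_{(\sigma_{i})_{i},\mu}(\chi_{A}|\mathcal{F}:\mathcal{X})\leq 0$. By Proposition~\ref{P:basicpropertiessubalgsalkjalk}~(\ref{P''monotonedecreaseaslJHlkja}), enlarging the conditioning algebra from $\mathcal{F}$ to $\mathcal{G}$ only decreases relative entropy, so $h_{(\sigma_{i})_{i},\mu}(\chi_{A}|\mathcal{G}:\mathcal{X})\leq h_{(\sigma_{i})_{i},\mu}(\chi_{A}|\mathcal{F}:\mathcal{X})\leq 0$, whence $A\in\Pi_{(\sigma_{i})_{i}}(\mu|\mathcal{G})$. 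This settles~(\ref{I:Pinksermonotoneinclusion}) immediately; strictly speaking one should also note that $S_A$—the smallest $G$-invariant complete sub-sigma-algebra containing $A$—is computed the same way on both sides, which is automatic, and that we need $\mathcal{F}\subseteq\mathcal{G}$ so that the relative Pinsker algebra $\Pi_{(\sigma_{i})_{i}}(\mu|\mathcal{G})$ is genuinely ``$\mathcal{G}$ together with the entropy-zero sets,'' but $\mathcal{G}\supseteq\mathcal{F}$ handles that.

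For part~(\ref{I:Pinksermonotoneextension}), let $A\in\Pi_{(\sigma_{i})_{i}}(\nu|\mathcal{F})$, so the relative entropy of $\chi_A$ given $\mathcal{F}$ in the presence of $\mathcal{Y}$ is $\leq 0$. We want the same with $\mathcal{Y}$ replaced by the larger ambient algebra $\mathcal{X}$. This is where the key structural point enters: the approximations defining $h_{(\sigma_{i})_{i},\mu}(\chi_A|\mathcal{F}:\mathcal{X})$ involve an infimum over finite observables $\gamma\geq\chi_A\vee\beta$ that are $\mathcal{X}$-measurable—hence range over a \emph{larger} class than the $\mathcal{Y}$-measurable ones—and for each such $\gamma$ one counts liftings of $\beta$-microstates. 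Since $G\actson(Y,\nu)$ is a factor of $G\actson(X,\mu)$ with $\mathcal{Y}\subseteq\mathcal{X}$, any $\mathcal{Y}$-measurable $\gamma$ is also $\mathcal{X}$-measurable, and conversely the infimum over $\mathcal{X}$-measurable $\gamma$ is taken over a superset, so $h_{(\sigma_{i})_{i},\mu}(\chi_A|\beta:\mathcal{X})\leq h_{(\sigma_{i})_{i},\nu}(\chi_A|\beta:\mathcal{Y})$ for every $\mathcal{F}$-measurable $\beta$; taking the infimum over $\beta$ gives $h_{(\sigma_{i})_{i},\mu}(\chi_A|\mathcal{F}:\mathcal{X})\leq h_{(\sigma_{i})_{i},\nu}(\chi_A|\mathcal{F}:\mathcal{Y})\leq 0$, so $A\in\Pi_{(\sigma_{i})_{i}}(\mu|\mathcal{F})$.

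I expect the main (minor) obstacle to be bookkeeping around the ambient algebra in the relative entropy notation: one must check that ``in the presence of $\mathcal{Y}$'' versus ``in the presence of $\mathcal{X}$'' really does amount to an infimum over a larger versus smaller family of refining observables $\gamma$, using that $\mu$ restricts to $\nu$ on $\mathcal{Y}$ and that the sofic-approximation microstate spaces $\AP(\gamma,F,\delta,\sigma_i)$ for a $\mathcal{Y}$-measurable $\gamma$ are literally the same whether one regards $\gamma$ as defined on $(Y,\nu)$ or on $(X,\mu)$; once this compatibility is spelled out, both inclusions follow purely formally from Corollary~\ref{C:defnpinskfact} and Proposition~\ref{P:basicpropertiessubalgsalkjalk}, with no new estimates needed.
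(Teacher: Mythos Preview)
Your proposal is correct and follows essentially the same approach as the paper: both parts are reduced, via the characterization of $\Pi_{(\sigma_{i})_{i}}(\cdot|\cdot)$ in Corollary~\ref{C:defnpinskfact}, to the monotonicity of relative entropy in the ambient algebra (part~(\ref{I:Pinksermonotoneextension}), an infimum over a larger family of $\gamma$'s) and in the conditioning algebra (part~(\ref{I:Pinksermonotoneinclusion}), Proposition~\ref{P:basicpropertiessubalgsalkjalk}~(\ref{P''monotonedecreaseaslJHlkja})). Your write-up is more explicit about the bookkeeping (e.g.\ that $\AP(\gamma,F,\delta,\sigma_i)$ for $\mathcal{Y}$-measurable $\gamma$ is the same computed over $(Y,\nu)$ or $(X,\mu)$), but the argument is the same.
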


\begin{proof}
(\ref{I:Pinksermonotoneextension}): Let $A\in \Pi_{(\sigma_{i})_{i}}(\nu|\mathcal{F}).$ Then
\[h_{(\sigma)_{i},\mu}(\chi_{A}|\mathcal{F}:\mathcal{X})\leq h_{(\sigma_{i})_{i},\mu}(\chi_{A}|\mathcal{F}:\mathcal{Y})\leq 0,\]
so $A\in \Pi_{(\sigma_{i})_{i}}(\mu|\mathcal{F}).$

(\ref{I:Pinksermonotoneinclusion}): This is obvious from Proposition \ref{P:basicpropertiessubalgsalkjalk} (\ref{P''monotonedecreaseaslJHlkja}).

\end{proof}

We end this section by comparing relative entropy to relative Rokhlin entropy as defined by Seward. Let $K$ be a countable, discrete group, let $(X_{0},\mathcal{X}_{0},\mu_{0})$ be a Lebesgue probability space, and $K\cc (X_{0},\mathcal{X}_{0},\mu_{0})$ a probability measure-preserving action.
If $\mathcal{F}_{2},\mathcal{F}_{1}$ are two complete, $K$-invariant, sub-sigma-algebras of $\mathcal{X}_{0},$ then Seward in \cite{SewardKrieger} (see the remarks after Question 11.1 of \cite{SewardKrieger}) defined the outer Rokhlin entropy of $\mathcal{F}_{2}$ relative to $\mathcal{F}_{1}$ by
\[h_{\textnormal{Rok}}^{K}(\mathcal{F}_{1}|\mathcal{F}_{2}:\mathcal{X}_{0 },\mu_{0})=\inf\{H(\alpha|\mathcal{F}_{2})\},\]
where the infimum is over all finite measurable observables $\alpha\colon X_{0}\to A$ such that the complete sigma-algebra generated by $\{k\alpha^{-1}(\{a\}):k\in K,a\in A\}$ and $\mathcal{F}_{2}$ contains $\mathcal{F}_{1}.$

\begin{prop}\label{P:RE upper bound}
For any complete, $G$-invariant, sub-sigma-algebras $\mathcal{F}_{1},\mathcal{F}_{2}\subseteq \mathcal{X}$ we have
\[h_{(\sigma_{i})_{i},\mu}(\mathcal{F}_{1}|\mathcal{F}_{2}:\mathcal{X})\leq h_{\textnormal{Rok}}^{G}(\mathcal{F}_{1}|\mathcal{F}_{2}:\mathcal{X},\mu).\]

\end{prop}

\begin{proof} Let $\alpha$ be a finite measurable observable, let $\mathcal{Y}$ be the smallest complete, $G$-invariant sub-sigma-algebra of $\mathcal{X}$ which makes $\alpha$ measurable, and suppose that $\mathcal{Y}\vee \mathcal{F}_{2}\supseteq \mathcal{F}_{1}.$ By Proposition \ref{P:basicpropertiessubalgsalkjalk} (\ref{P:montoneincreaseslahdg}), (\ref{I:again obvious}),
\[h_{(\sigma_{i})_{i},\mu}(\mathcal{F}_{1}|\mathcal{F}_{2}:\mathcal{X})\leq h_{(\sigma_{i})_{i},\mu}(\mathcal{Y}\vee \mathcal{F}_{2}|\mathcal{F}_{2}:\mathcal{X})= h_{(\sigma_{i})_{i},\mu}(\mathcal{Y}|\mathcal{F}_{2}:\mathcal{X}).\]
 By Proposition \ref{P;generatohklshklas} (\ref{I:conditionalsup}) and Proposition \ref{P:basicpropertiesagainlshdl} (\ref{P:trivialboudnaasdlj}), we have
\[h_{(\sigma_{i})_{i},\mu}(\mathcal{F}_{1}|\mathcal{F}_{2}:\mathcal{X})\leq h_{(\sigma_{i})_{i},\mu}(\mathcal{Y}|\mathcal{F}_{2}:\mathcal{X})=h_{(\sigma_{i})_{i},\mu}(\alpha|\mathcal{F}_{2}:\mathcal{X})\leq H(\alpha|\mathcal{F}_{2}).\]
Taking the infimum over all $\alpha$ completes the proof.

\end{proof}

We mention that it is automatic from Proposition \ref{P:RE upper bound} and Proposition \ref{P:basicpropertiesagainlshdl} (\ref{P;subaddaklshdgila}) that if $\mathcal{F}\subseteq \mathcal{X}$ is a complete, $G$-invariant sub-sigma algebra  and $G\actson (Y,\nu)$ is the factor corresponding to $\mathcal{F},$ then
\[h_{(\sigma_{i})_{i},\mu}(X,G)\leq h_{(\sigma_{i})_{i},\nu}(Y,G)+h_{\textnormal{Rok}}^{G}(\mathcal{X}|\mathcal{F}:\mathcal{X},\mu).\] 
After our paper appeared on the arXiv in preprint form, Alpeev-Seward obtained a  different proof of the above formula in \cite[Proposition 1.10]{SewardKrieger3}.

\section{Preliminaries on Local and Double Empirical Convergence}\label{S:dq}

\subsection{Preliminaries in the Topological Case}
In order to prove our product formula for outer Pinsker factors, we will need the notion of local and doubly empirical convergence defined in \cite{AustinAdd} (where it is referred to as doubly quenched convergence). We will use the reformulation given in \cite{Me12}. Throughout this subsection, we fix a compact, metrizable space $Z$ and an action $G\actons Z$ by homeomorphisms.

 Recall that if $K$ is a compact, metrizable space and $G\actons K$ by homeomorphisms, then a pseudometric $\rho$ on $K$ is \emph{dynamically generating} if for all $x,y\in K$ with $x\ne y,$ there is a $g\in G$ with $\rho(gx,gy)>0.$
If $\rho$ is as above and $n\in \N,$ we define $\rho_{2}$ on $K^{n}$ by
\[\rho_{2}(x,y)^{2}=\frac{1}{n}\sum_{j=1}^{n}\rho(x(j),y(j))^{2}.\]
Throughout this subsection, we will also fix a dynamically generating pseudometric $\rho$ on $Z.$

For a compact, metrizable space $X,$ we use $\Prob(X)$ for the space of completed Borel probability measures on $X.$ If $G\cc X$ by homeomorphisms, we let $\Prob_{G}(X)$ be the space of $G$-invariant elements of $\Prob(X).$ We begin by recalling the microstates space for measure-theoretic entropy in terms of a given compact model.

\begin{defn}\label{D:topmicrospace}
For a finite $F\subseteq G$ and a $\delta>0,$ we let $\Map(\rho,F,\delta,\sigma_{i})$ be the set of all $\phi\in Z^{d_{i}}$ so that
\[\rho_{2}(g\phi,\phi\circ \sigma_{i}(g))<\delta.\]
\end{defn}

We think of $\Map(\rho,F,\delta,\sigma_{i})$ as a space of ``topological microstates'' in the sense that $\phi\in \Map(\rho,F,\delta,\sigma_{i})$ gives a finitary model of $G\actson Z$ (i.e. it is approximately equivariant with approximate being measured in the topology on $Z$). For a finite $L\subseteq C(Z),$ a $\delta>0,$ and a $\mu\in \Prob(Z),$ set
\[U_{L,\delta}(\mu)=\bigcap_{f\in L}\left\{\nu\in \Prob(Z):\left|\int_{Z}f\,d\mu-\int_{Z}f\,d\nu\right|<\delta\right\}.\]
Note that $U_{L,\delta}(\mu)$ ranging over all $L,\delta$ gives a basis of neighborhoods of $\mu$ in the weak-$^{*}$ topology on $\Prob(Z).$

\begin{defn}\label{D:measmicrospace}
Let $\mu\in \Prob_{G}(Z).$ For finite sets $F\subseteq G,L\subseteq C(Z),$ and a $\delta>0,$ set
\[\Map_{\mu}(\rho,F,L,\delta,\sigma_{i})=\{\phi\in \Map(\rho,F,\delta,\sigma_{i}):\phi_{*}(u_{d_{i}})\in U_{L,\delta}(\mu)\}.\]
\end{defn}
We think of $\Map(\rho,F,L,\delta,\sigma_{i})$ as a space of ``measure-theoretic microstates'' for $G\actson (Z,\mu).$

\begin{defn}
Fix a $\mu\in\Prob_{G}(Z).$
We say that a sequence $\mu_{i}\in \Prob(Z^{d_{i}})$ \emph{locally and empirically converges to $\mu$}, and write $\mu_{i}\to^{le}\mu,$ if
we have:
\begin{itemize}
\item $u_{d_{i}}\left(\{j:\left|\int_{Z^{d_{i}}}f(x(j))\,d\mu_{i}(x)-\int_{Z}f\,d\mu\right|<\kappa\}\right)\to 1$ for all $f\in C(Z),\kappa>0,$ and\\
\item $\mu_{i}(\Map_{\mu}(\rho,F,L,\delta,\sigma_{i}))\to 1$ for all finite $F\subseteq G,L\subseteq C(Z),$ and $\delta>0.$\\
\end{itemize}
We say that $\mu_{i}$ \emph{locally and doubly empirically converges to $\mu$}, and write $\mu_{i}\to^{lde}\mu,$ if $\mu_{i}\otimes \mu_{i}\to^{le} \mu\otimes \mu.$
\end{defn}

In \cite{Me12} we equated this notion of local and empirical convergence to the original one defined by Austin in \cite{AustinAdd} (originally called quenched convergence). It thus follows from the results in \cite{AustinAdd} that the notion of local and doubly empirical convergence does not depend upon $\rho.$

\begin{defn}Let $G$ be a countable, discrete, sofic group with sofic approximation $\sigma_{i}\colon G\to S_{d_{i}}.$  Let $(X,\mu)$ be a Lebesgue probability space with $G\cc (X,\mu)$ by measure-preserving transformations. We say that $G\cc (X,\mu)$ is \emph{strongly sofic} (with respect to $(\sigma_{i})_{i}$) if there is some compact model $G\cc (Y,\nu)$ for $G\cc (X,\mu)$ and a sequence $\nu_{i}\in \Prob(Y^{d_{i}})$ with $\nu_{i}\to^{lde}\nu.$

\end{defn}
The results of \cite{AustinAdd} imply that local and empirical convergence is independent of the choice of topological model. Thus in the preceding definition we may replace ``some compact model'' with ``for any compact model.''

We mention that by work of \cite{Bow},\cite{Me5},\cite{GabSew} we have many examples of strongly sofic actions (these are all proved in Proposition 2.15 of \cite{Me12} so we will not repeat the proof). Recall that if $G$ is a countable, discrete group, then an algebraic action of $G$ is an action $G\cc X$ by automorphisms where $X$ is a compact, metrizable, abelian group.

\begin{example} Let $G$ be a countable, discrete, sofic group. The following actions are all strongly sofic with respect to any sofic approximation of $G:$
\begin{enumerate}
\item all Bernoulli actions (by \cite{Bow},\cite{AustinAdd}),
\item any algebraic action of the form $G\cc (X_{f},m_{X_{f}})$ where $f\in M_{m,n}(\Z(G))$ and $\lambda(f)$ has dense image (by \cite{Me5}),
\item any algebraic action of the form $G\cc (X_{f},m_{X_{f}})$ where $f\in M_{n}(\Z(G))$ and $\lambda(f)$ is injective (by \cite{Me5}),
\item any algebraic action of the form $G\cc (X,m_{X})$ where $X$ is a profinite group and the homoclinic group of $G\cc X$ is dense (by \cite{GabSew}).
\end{enumerate}
\end{example}

We prove a few permanence properties of strong soficity. For the proofs, it will be useful to phrase local and doubly empirical convergence in functional analytic terms, for which we introduce some notation.
For $f\in C(Z),$ we define $a_{g}(f)\in C(Z)$ by $a_{g}(f)(z)=f(g^{-1}z).$ If $\mu\in \Prob(Z)$ and $f\in C(Z)$ we will often use $\mu(f)$ for $\int_{Z}f\,d\mu.$ For $k\in \N$ and $1\leq j\leq k,$ we define $\iota_{j}\colon C(Z)\to C(Z^{k})$ by $\iota_{j}(f)(z)=f(z(j)).$ If $X,Y$ are compact spaces and $f\in C(X),g\in C(Y),$ we define $f\otimes g\in C(X\times Y)$ by $(f\otimes g)(x,y)=f(x)g(y).$ Lastly, for an integer $k$ we identify $(Z\times Z)^{k}$ with $Z^{k}\times Z^{k}$ in the natural way. With this notation, we can now phrase our functional analytic reformulation of local and doubly empirical convergence.

\begin{prop}\label{P:alternateformulation}
A sequence $\mu_{i}\in \Prob(Z^{d_{i}})$ locally and doubly empirically converges to $\mu\in C(Z)$ if and only if:
\begin{itemize}
\item $\frac{1}{d_{i}}\sum_{j=1}^{d_{i}}|\mu_{i}(\iota_{j}(f))-\mu(f)|^{2}\to 0,$ for all $f\in C(Z),$\\
\item $\mu_{i}\otimes \mu_{i}\left(\left|\mu(f_{1})\mu(f_{2})-\frac{1}{d_{i}}\sum_{j=1}^{d_{i}}\iota_{j}(f_{1})\otimes\iota_{j}( f_{2})\right|^{2}\right)\to 0,$ for all $f_{1},f_{2}\in C(Z),$ and\\
\item $\frac{1}{d_{i}}\sum_{j=1}^{d_{i}}\mu_{i}(|\iota_{j}(a_{g}(f))-\iota_{\sigma_{i}(g)(j)}(f)|^{2})\to 0,$ for all $f\in C(Z),g\in G.$ \\
\end{itemize}

\end{prop}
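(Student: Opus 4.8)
The plan is to unwind the definitions on both sides and match the two formulations term by term, using that a dynamically generating pseudometric can be built from countably many continuous functions. First I would recall that local and empirical convergence for $\mu_i \in \Prob(X^{d_i})$ is the conjunction of two conditions: the empirical coordinate-distributions converge (the first bullet of the definition of $\to^{le}$), and $\mu_i$ concentrates on the measure-theoretic microstates spaces $\Map_\mu(\rho,F,L,\delta,\sigma_i)$. I would fix a concrete dynamically generating pseudometric of the form $\rho(x,y)^2 = \sum_{n} 2^{-n}|f_n(x)-f_n(y)|^2$ for a countable dense sequence $(f_n)$ in $C(X)$ (or a countable subset that separates points after applying the $G$-action); this is the standard move to reduce statements about $\rho$ to statements about finitely many $f\in C(X)$. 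With this choice, the condition $\phi \in \Map(\rho,F,\delta,\sigma_i)$, i.e. $\rho_2(g\phi,\phi\circ\sigma_i(g)) < \delta$ for all $g\in F$, becomes a statement of the form $\frac{1}{d_i}\sum_{j}|a_g(f)(\phi(j)) - f(\phi(\sigma_i(g)(j)))|^2$ is small, i.e. $\frac{1}{d_i}\sum_j |\iota_j(a_g(f))(\phi) - \iota_{\sigma_i(g)(j)}(f)(\phi)|^2$ is small.

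Next I would treat the equivalence of the approximate-equivariance condition with the third bullet. Integrating $\rho_2(g\phi,\phi\circ\sigma_i(g))^2$ against $\mu_i$ and expanding $\rho$ in the $f_n$, the statement ``$\mu_i(\Map(\rho,F,\delta,\sigma_i)) \to 1$ for all $F,\delta$'' is equivalent, by Markov's inequality in one direction and boundedness of the integrand plus a union bound over the countably many $f_n$ in the other, to ``$\frac{1}{d_i}\sum_{j=1}^{d_i}\mu_i(|\iota_j(a_g(f)) - \iota_{\sigma_i(g)(j)}(f)|^2) \to 0$ for all $f\in C(X), g\in G$'' — which is exactly the third bullet of Proposition \ref{P:alternateformulation}. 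Here one uses that $f\mapsto \frac{1}{d_i}\sum_j \mu_i(|\iota_j(a_g(f)) - \iota_{\sigma_i(g)(j)}(f)|^2)$ is (uniformly in $i$) continuous in $f$ w.r.t. the sup norm, so checking it on the dense sequence $(f_n)$ suffices.

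For the first bullet of Proposition \ref{P:alternateformulation}, I would show it is equivalent to the first bullet in the definition of $\to^{le}$ for $\mu_i$, namely $u_{d_i}(\{j : |\mu_i(\iota_j(f)) - \mu(f)| < \kappa\}) \to 1$ for all $f,\kappa$. Since $|\mu_i(\iota_j(f)) - \mu(f)|$ is bounded by $2\|f\|_\infty$, convergence in probability (which is what the $u_{d_i}$-measure condition says, treating $j$ as uniform on $\{1,\dots,d_i\}$) of a uniformly bounded quantity is equivalent to convergence of its second moment, i.e. $\frac{1}{d_i}\sum_j |\mu_i(\iota_j(f)) - \mu(f)|^2 \to 0$. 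Finally, for the second bullet, I would observe that the $\mu_i \otimes \mu_i$-part of local and doubly empirical convergence is by definition $\mu_i\otimes\mu_i \to^{le} \mu\otimes\mu$, and that the ``$\Map_{\mu\otimes\mu}$-concentration'' of $\mu_i\otimes\mu_i$ relative to the product pseudometric on $X\times X$, combined with the first-bullet (coordinate-distribution) condition for $\mu_i\otimes\mu_i$, is equivalent to the concentration condition on $(X\times X)^{d_i} \cong X^{d_i}\times X^{d_i}$; unwinding the identification $\iota_j$ on $(X\times X)^k$ in terms of $\iota_j(f_1)\otimes\iota_j(f_2)$ and using $f_1\otimes f_2$ separating $C(X\times X)$ via Stone–Weierstrass, the statement ``$\mu_i\otimes\mu_i$ has convergent coordinate-distributions'' becomes precisely $\mu_i\otimes\mu_i(|\mu(f_1)\mu(f_2) - \frac{1}{d_i}\sum_j \iota_j(f_1)\otimes\iota_j(f_2)|^2) \to 0$. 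One must also check that the approximate-equivariance part for $\mu_i\otimes\mu_i$ is automatic given the third bullet for $\mu_i$ — this follows since the product sofic approximation acts diagonally and $\rho_2$ on $(X\times X)^{d_i}$ splits as a sum of the two $\rho_2$'s on $X^{d_i}$; so no fourth condition is needed.

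The main obstacle I anticipate is the bookkeeping in the last step: carefully reconciling the two different ways of organizing the ``doubly'' part — Austin's definition asks for $\mu_i\otimes\mu_i$ to locally empirically converge on the space $(X\times X)^{d_i}$, whereas the clean functional-analytic bullet only records the second-moment of the discrepancy between $\mu(f_1)\mu(f_2)$ and the empirical average of $\iota_j(f_1)\otimes\iota_j(f_2)$. One has to be careful that the microstate-concentration for $\mu_i\otimes\mu_i$ (w.r.t. $\rho\oplus\rho$) does not secretly impose more than the third bullet for $\mu_i$ alone, and conversely that it is implied by it. Since this is the reformulation already carried out in \cite{Me12} (the excerpt says ``We will use the reformulation given in \cite{Me12}''), I would largely cite that source for the non-routine implications and restrict the written proof to indicating how the pseudometric is chosen and how the $L^2$-versus-convergence-in-probability reductions go, since all quantities in sight are uniformly bounded.
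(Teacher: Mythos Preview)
Your approach is essentially the same as the paper's: one direction is declared clear, and for the converse the paper uses Stone--Weierstrass density of $\Span\{f_1\otimes f_2\}$ in $C(X\times X)$ to pass from the first two bullets to both the coordinate-distribution and the empirical-measure conditions for $\mu_i\otimes\mu_i$, then cites Lemma~A.1 of \cite{Me9} (in place of your Markov/boundedness argument) to obtain approximate equivariance from the third bullet. One small correction to your bookkeeping: the second bullet is \emph{not} the ``coordinate-distribution'' condition for $\mu_i\otimes\mu_i$---that condition follows already from the first bullet alone, applied to each factor and using boundedness to control the product $\mu_i(\iota_j(f_1))\mu_i(\iota_j(f_2))$---but rather the \emph{empirical-measure} half of the microstate condition $\phi_*(u_{d_i})\in U_{L,\delta}(\mu\otimes\mu)$; once you swap this identification, your plan matches the paper's proof.
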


\begin{proof}
It is clear that if $\mu_{i}\to^{lde}\mu,$ then the three items in the proposition hold. Conversely, suppose that the three items in the proposition hold. Fix a compatible metric $\Delta$ on $Z.$ Define a metric $\widetilde{\Delta}$ on $Z\times Z$ by
\[\widetilde{\Delta}((x_{1},y_{1}),(x_{2},y_{2}))^{2}=\frac{\Delta(x_{1},y_{1})^{2}+\Delta(x_{2},y_{2})^{2}}{2}.\]
 The first two items imply, by density of $\Span\{f_{1}\otimes f_{2}:f_{1},f_{2}\in C(Z)\}$ in $C(Z\times Z)$, that
\[u_{d_{i}}\left(\left\{j:\left|\int_{Z^{d_{i}}\times Z^{d_{i}}}f(x(j),y(j))\,d(\mu_{i}\otimes \mu_{i})(x,y)-\int_{Z\times Z}f\,d(\mu\otimes \mu)\right|<\kappa\right\}\right)\to 1,\mbox{ for all $f\in C(Z\times Z),$}\]
\[\mu_{i}\otimes \mu_{i} \left(\left\{(x,y):\left|\frac{1}{d_{i}}\sum_{j=1}^{d_{i}}f(x(j),y(j))-\int_{Z\times Z}f\,d(\mu\otimes \mu)\right|<\kappa\right\}\right)\to 1\mbox{ for all $f\in C(Z\times Z).$}\]
Hence it suffices to show that for all $g\in G$
\[\int_{Z^{d_{i}}\times Z^{d_{i}}}\widetilde{\Delta}_{2}((g^{-1}x,g^{-1}y),(x\circ \sigma_{i}(g)^{-1},y\circ \sigma_{i}(g)^{-1}))^{2}\,d(\mu_{i}\otimes \mu_{i})(x,y)\to 0,\]
which is equivalent to
\begin{equation}\label{E:asympottics lde}
\int_{Z^{d_{i}}}\Delta_{2}(g^{-1}x,x\circ\sigma_{i}(g)^{-1})^{2}\,d\mu_{i}(x)\to 0.
\end{equation}
Following the arguments of Lemma A.1 of \cite{Me9}, we see that the third item in the hypotheses proposition implies (\ref{E:asympottics lde}).

\end{proof}

\begin{prop}\label{P:closure of strongly sofic}
The set of $\mu\in \Prob_{G}(Z)$ so that $G\actson (Z,\mu)$ is strongly sofic with respect to $(\sigma_{i})_{i}$ is weak$^{*}$-closed.
\end{prop}

\begin{proof} Suppose $(\mu^{(n)})_{n}$ is a sequence of elements of $\Prob_{G}(Z)$ so that $G\actson (Z,\mu^{(n)})$ is strongly sofic with respect to $(\sigma_{i})_{i}$ for every $n.$ Additionally, assume that there is a $\mu\in \Prob_{G}(Z)$ with $\mu^{(n)}\to^{wk^{*}}\mu$ as $n\to\infty.$  Let $L$ be a countable, dense subset of $C(Z)$ and write $L=\bigcup_{n=1}^{\infty}L_{n},$ where $L_{n}$ are finite sets. For each natural number $n,$ choose a sequence $\mu^{(n)}_{i}\in \Prob(Z^{d_{i}})$ so that $\mu^{(n)}_{i}\to^{lde}\mu^{(n)}$ as $i\to\infty.$ By Proposition \ref{P:alternateformulation}, we may choose a strictly increasing sequence of integers $i_{n}$ so that
\begin{itemize}
\item $\frac{1}{d_{i}}\sum_{j=1}^{d_{i}}\left|\mu^{(k)}_{i}(\iota_{j}(f))-\mu^{(k)}(f)\right|^{2}<2^{-n}$ for all $i\geq i_{n},1\leq k\leq n,f\in L_{n},$\\
\item  $\mu^{(k)}_{i}\otimes \mu^{(k)}_{i}\left(\left|\mu^{(k)}(f_{1})\mu^{(k)}(f_{2})-\frac{1}{d_{i}}\sum_{j=1}^{d_{i}}\iota_{j}(f_{1})\otimes \iota_{j}(f_{2})\right|^{2}\right)<2^{-n}$ for all $i\geq i_{n}$, $1\leq k\leq n$, $f\in L_{n},$ and\\
\item $\frac{1}{d_{i}}\sum_{j=1}^{d_{i}}\mu_{i}^{(k)}(|\iota_{j}(a_{g}(f))-\iota_{\sigma_{i}(g)(j)}(f)|^{2})<2^{-n}$ for all $i\geq i_{n},1\leq k\leq n,f\in L_{n}.$
\end{itemize}

Given $i\in \N,$ let $n(i)\in \N$ be defined by $i_{n(i)}\leq i<i_{n(i)+1}.$ Now define $\mu_{i}\in \Prob(Z^{d_{i}})$ by $\mu_{i}=\mu_{i}^{(n(i))}.$  Since $\mu^{(n)}\to^{wk^{*}}\mu$ as $n\to\infty$ and $n(i)\to \infty$ as $i\to\infty,$ it is a simple application of Proposition \ref{P:alternateformulation} to see that $\mu_{i}\to^{lde}\mu.$

\end{proof}

\subsection{Applications to strong soficity for actions on Lebesgue spaces}

We apply the results in the previous subsection to actions on Lebesgue spaces, even ones that are not given in terms of a topological model. We start with the following consequence of Proposition \ref{P:closure of strongly sofic}.

\begin{cor}\label{P:inverselimitstronglysofic}
Suppose that $G$ acts on an inverse system
\[\cdots (X_{n+1},\mu_{n+1})\to (X_{n},\mu_{n})\to (X_{n-1},\mu_{n-1})\to \cdots \to (X_{1},\mu_{1})\]
of Lebesgue probability spaces and that $G\actson (X_{n},\mu_{n})$ is strongly sofic with respect to $(\sigma_{i})_{i}$ for every $n.$ Then the inverse limit action of this inverse system is also strongly sofic with respect to $(\sigma_{i})_{i}.$

\end{cor}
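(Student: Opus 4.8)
The plan is to realize the inverse limit action as a compact model built out of compatible compact models for the $(X_n,\mu_n)$, and then to apply Proposition \ref{P:closure of strongly sofic} after a diagonal-type argument — except that a cleaner route is available directly from Proposition \ref{P:alternateformulation}, so I would combine both. First I would fix a compact model: using the fact that each $(X_n,\mu_n)$ is standard and that the connecting maps are $G$-equivariant, realize $G\actson(X,\mu):=\varprojlim(X_n,\mu_n)$ on a compact metrizable space $X$ so that each $X_n$ is a compact model with $G$-equivariant continuous surjections $\pi_n\colon X\to X_n$ and $p_n\colon X_{n+1}\to X_n$ with $\pi_n=p_n\circ\pi_{n+1}$, and $(\pi_n)_*\mu=\mu_n$. (Concretely, take $X\subseteq\prod_n X_n$ to be the inverse-limit subset, which is closed and $G$-invariant.) The key point is that $C(X)$ is then the closure of $\bigcup_n \pi_n^*(C(X_n))$ by the Stone–Weierstrass theorem, since the $\pi_n^*(C(X_n))$ form an increasing sequence of subalgebras separating points of $X$.

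Next I would transport the hypothesis to this model. For each $n$, strong soficity of $G\actson(X_n,\mu_n)$ gives, in the model $X_n$, a sequence $\mu_i^{(n)}\in\Prob(X_n^{d_i})$ with $\mu_i^{(n)}\to^{lde}\mu_n$. Rather than lifting these to $X$, I would observe that $\mu^{(n)}$, viewed as a measure on $X$ concentrated on the "level-$n$ graph'' — more precisely the pushforward of $\mu$ is $\mu$ itself but the point is we want a sequence of $G$-invariant measures on $X$ converging weak$^*$ to $\mu$ each of which is strongly sofic. The natural choice: for the inverse limit, $\mu$ is already the unique $G$-invariant lift, so instead I would run the diagonal argument of Proposition \ref{P:closure of strongly sofic} directly. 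Fix a countable dense $L=\bigcup_m L_m\subseteq C(X)$ with $L_m$ finite; by density of $\bigcup_n\pi_n^*(C(X_n))$ we may assume each $f\in L_m$ is of the form $\pi_{n}^*(\tilde f)$ for some $n\le$ (some index) — more carefully, arrange that every $f\in L_m$ is $\pi_{k(m)}^*$ of a function on $X_{k(m)}$ for a suitable $k(m)\to\infty$. For such an $f$, the three functional-analytic quantities in Proposition \ref{P:alternateformulation} evaluated against a microstate sequence for $(X_{k(m)},\mu_{k(m)})$, pushed forward to $X$ via a section of $\pi_{k(m)}$, are \emph{exactly} the corresponding quantities for $X_{k(m)}$ (because $f$ factors through $\pi_{k(m)}$, and $a_g$ commutes with $\pi_{k(m)}$, and $\mu_{k(m)}=(\pi_{k(m)})_*\mu$). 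Hence choosing $i_n$ increasing so that the three estimates of Proposition \ref{P:closure of strongly sofic}'s proof hold at level $k(n)$ for $f\in L_n$, and setting $\mu_i$ to be the $X$-pushforward (via a fixed measurable-but-here-continuous section, or simply the inverse-limit structure) of $\mu_i^{(k(n(i)))}$, gives a sequence in $\Prob(X^{d_i})$ which satisfies all three conditions of Proposition \ref{P:alternateformulation} in the limit, for every $f\in L$ and hence, by density, for every $f\in C(X)$. Therefore $\mu_i\to^{lde}\mu$ and $G\actson(X,\mu)$ is strongly sofic.

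The main obstacle is the bookkeeping in the second step: one must ensure that the microstate measures $\mu_i^{(n)}$ living on the different spaces $X_n$ can be coherently pushed into the single space $X$ without destroying the local-and-doubly-empirical estimates, and that the dense set $L\subseteq C(X)$ can be organized so that each $f\in L_m$ genuinely factors through some finite level $\pi_{k(m)}$. The clean way around the first issue is to not push forward at all, but to note that $X_n$ is itself a valid compact model for the \emph{factor} action $G\actson(X_n,\mu_n)$, and to invoke Austin's model-independence (already cited after Definition \ref{D:measmicrospace}) together with the fact that a microstate for a factor of an action need not lift — but here we are going the other way, realizing everything inside $X$, so model-independence lets us freely pick $X$ as above. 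I expect the verification of the three bullet conditions of Proposition \ref{P:alternateformulation} to then be a routine repetition of the diagonal argument in Proposition \ref{P:closure of strongly sofic}, which is why the corollary is stated without a separate proof.
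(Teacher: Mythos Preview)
Your approach is correct but takes a different route from the paper. You work directly on the inverse-limit space $X\subseteq\prod_n X_n$, push the microstate measures $\mu_i^{(n)}\in\Prob(X_n^{d_i})$ up to $X^{d_i}$ via sections of $\pi_n$, and then redo the diagonal argument of Proposition~\ref{P:closure of strongly sofic} by hand, using that for $f=\tilde f\circ\pi_n$ the three quantities in Proposition~\ref{P:alternateformulation} on $(X,\mu_i)$ reduce \emph{exactly} to the corresponding quantities for $\tilde f$ on $(X_n,\mu_i^{(n)})$, since only $\pi_n\circ s=\id$ is used. This works. Two small corrections: continuous sections of $\pi_n$ need not exist, but Borel sections do (any continuous surjection of compact metrizable spaces has one), and Borel is all you need for the pushforward; and the corollary does carry a proof in the paper.

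The paper's argument is different and cleaner. It works on the \emph{full} product $\widetilde X=\prod_n X_n$ rather than the inverse-limit subspace, and first arranges that each $X_n$ contains a $G$-fixed point $x_n^*$. It then embeds $X_n\hookrightarrow\widetilde X$ by $x\mapsto(\pi_{1,n}(x),\dots,\pi_{n-1,n}(x),x,x_{n+1}^*,x_{n+2}^*,\dots)$. Because this embedding is $G$-\emph{equivariant}, the pushforward $\widetilde\mu_n$ of $\mu_n$ is a $G$-invariant probability measure on $\widetilde X$ with $G\actson(\widetilde X,\widetilde\mu_n)$ measurably isomorphic to $G\actson(X_n,\mu_n)$, hence strongly sofic; and one checks $\widetilde\mu_n\to\widetilde\mu$ weak$^*$, where $G\actson(\widetilde X,\widetilde\mu)$ is a model for the inverse limit. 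Proposition~\ref{P:closure of strongly sofic} then applies as a black box. The fixed-point trick is exactly what you were reaching for when you wrote ``we want a sequence of $G$-invariant measures on $X$ converging weak$^*$ to $\mu$ each of which is strongly sofic'': it purchases $G$-equivariance of the level-$n$ embeddings, which your sections lack, and thereby avoids reopening the diagonal argument.
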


\begin{proof} For natural numbers $n\leq m,$ let $\pi_{n,m}\colon X_{m}\to X_{n}$ be the equivariant connecting maps. By choosing appropriate topological models, we may assume that:
\begin{itemize}
\item for each $n,$ the set $X_{n}$ is a compact, metrizable space and $\mu_{n}$ is a completed Borel probability measure on $X_{n},$
\item for each $n,$ the action $G\actson X_{n}$ is by homeomorphisms,
\item  for each $m\leq n,$ the map $\pi_{n,m}$ is continuous,
\item for every $n,$ there is a point $x_{n}^{*}\in X_{n}$ which is fixed by the action of $G.$
\end{itemize}

Let
\[Z=\prod_{n=1}^{\infty}X_{n},\]
and let $\widetilde{\mu}_{n}$ be the unique Borel probability measure on $Z$  which satisfies
\[\int f\,d\widetilde{\mu}_{n}(x)=\int f(\pi_{1,n}(x),\pi_{2,n}(x),\cdots, x,x_{n+1}^{*},x_{n+2}^{*},\cdots)\,d\mu_{n}(x)\mbox{ for every $f\in C(\widetilde{X})$}.\]
It is easy to check that:
\begin{itemize}
\item for each $n,$ $G\actson (Z,\widetilde{\mu}_{n})$ is isomorphic, as a measure-preserving system, to $G\actson (X_{n},\mu_{n}),$
\item the measures $\widetilde{\mu}_{n}$ converge in the weak$^{*}$-topology to a measure $\widetilde{\mu},$
\item $G\actson (Z,\widetilde{\mu})$ is isomorphic, as a measure-preserving system, to the inverse limit of $G\actson (X_{n},\mu_{n}).$
\end{itemize}

From the first item we see that $G\actson (Z,\widetilde{\mu}_{n})$ is strongly sofic with respect to $(\sigma_{i})_{i}.$ By the above three items and Proposition \ref{P:closure of strongly sofic} we see that the inverse limit of $G\actson (Z,\mu_{n})$ is strongly sofic.

\end{proof}

\begin{cor}
Suppose that $(X,\mu)$ is a Lebesgue probability space and that $G\actson (X,\mu)$ by measure-preserving transformations.
If $G\actson (X,\mu)$ is strongly sofic with respect to $(\sigma_{i})_{i},$ then so is any other measure-preserving action on a Lebesgue probability space which is weakly contained in $G\actson (X,\mu).$

\end{cor}

\begin{proof}
We may assume that $X$ is a compact, metrizable space, that $G\actson X$ by homeomorphisms, and that $\mu\in \Prob_{G}(X).$ Let $G\actson (Y,\nu)$ be a measure-preserving action weakly contained in $G\actson (X,\mu),$ and with $(Y,\nu)$ a Lebesgue probability space. Choosing an appropriate compact model, we may assume that:
\begin{itemize}
\item $Y=\{0,1\}^{\N\times G},$
\item $G\actson Y$ by $(gy)(n,h)=y(n,g^{-1}h)$ for all $y\in Y, n\in \N,g,h\in G,$
\end{itemize}
For $(s,j,g)\in \{0,1\}\times \N\times G,$ let $A_{j,g}(s)=\{y\in Y:y(j,g)=s\}.$
Since $G\actson (Y,\nu)$ is weakly contained in $(X,\mu),$ for each $(s,j)\in \{0,1\}\times \N$ we may find a sequence $B^{(n)}_{j}(s)$ of Borel subsets of $X,$ so that
\begin{itemize}
\item $B^{(n)}_{j}(1)^{c}=B^{(n)}_{j}(0)$ for all $(j,n)\in \N\times \N,$
\item $\mu\left(\bigcap_{l=1}^{k}g_{l}B_{j_{l}}^{(n)}(s_{l})\right)\to_{n\to\infty}\nu\left(\bigcap_{l=1}^{k}A_{j_{l},g_{l}}(s_{l})\right)$ for all $k\in\N,$ all $g_{1},\dots,g_{k}\in G,$ all $j_{1},\dots,j_{k}\in \N,$ and all $s_{1},\dots,s_{k}\in\{0,1\}.$
\end{itemize}
Define a sequence of Borel maps $\phi^{(n)}\colon X\to Y$ by
\[\phi^{(n)}(x)(j,g)=1_{B^{(n)}_{j}(1)}(g^{-1}x).\]

Set $\nu^{(n)}=(\phi^{(n)})_{*}(\mu).$ It is direct to show, from our choice of $B_{n},$ that for all   $k\in\N,$ all $g_{1},\dots,g_{k}\in G,$ all $j_{1},\dots,j_{k}\in \N,$ and all $s_{1},\dots,s_{k}\in\{0,1\}$ we have
\[\nu^{(n)}\left(\bigcap_{l=1}^{k}A_{j_{l},g_{l}}(s_{l})\right)\to \nu\left(\bigcap_{l=1}^{k}A_{j_{l},g_{l}}(s_{l})\right).\]
By the Stone-Weierstrass theorem, the set
\[\bigcup_{k=1}^{\infty}\left\{1_{\bigcap_{l=1}^{k}A_{j_{l},g_{l}}(s_{l})}:g_{1},\dots,g_{k}\in G,s_{1},\dots,s_{k}\in\{0,1\},j_{1},\dots,j_{k}\in\N\right\}\]
has dense linear span in $C(X),$ so  $\nu^{(n)}\to_{n\to\infty} \nu$ in the weak$^{*}$ topology.
By construction, the action $G\actson (Y,\nu^{(n)})$ is a factor of $G\actson (X,\mu)$ and so is strongly sofic with respect to $(\sigma_{i})_{i}.$ We now apply Proposition \ref{P:closure of strongly sofic} with $Y=Z$ to complete the proof.

\end{proof}

 If $X,Y,A$ are sets and $\beta\colon Y\to A,$ we define $\beta\otimes 1\colon Y\times X\to A$ by $(\beta\otimes 1)(y,x)=\beta(y).$ For maps $\alpha\colon X\to A,$ $\beta\colon Y\to B$ we define $\beta\otimes \alpha\colon Y\times X\to B\times A$ by $(\beta\otimes \alpha)(y,x)=(\beta(y),\alpha(x)).$
If $(X,\mathcal{X},\mu),(Y,\mathcal{Y},\nu)$ are Lebesgue probability spaces, and $\mathcal{F}\subseteq \mathcal{Y},$ we identify $\mathcal{F}$ with the sub-sigma-algebra of $\mathcal{Y}\otimes\mathcal{X}$ which is the completion of $\{A\times X: A\in \mathcal{F}\}.$  Local and doubly empirical convergence has the following implication for relative entropy, which will be one of the crucial facts used in our proof of a Pinsker product formula.

\begin{prop}\label{P:wcindeplift}
Let $(X,\mathcal{X},\mu),$ $(Y,\mathcal{Y},\nu)$ be  Lebesgue probability spaces and $G\cc (X,\mathcal{X},\mu),$ $G\actson (Y,\mathcal{Y},\nu)$  measure-preserving actions. Suppose that $G\cc (X,\mathcal{X},\mu)$ is strongly sofic with respect to $(\sigma_{i})_{i}.$ Then for any $G$-invariant, complete, sub-sigma-algebras $\mathcal{F}_{1},\mathcal{F}_{2}\subseteq \mathcal{Y}$ we have
\[h_{(\sigma_{i})_{i},\nu}(\mathcal{F}_{1}|\mathcal{F}_{2}:\mathcal{Y})=h_{(\sigma_{i})_{i},\nu\otimes \mu}(\mathcal{F}_{1}|\mathcal{F}_{2}\otimes \mathcal{X}:\mathcal{Y}\otimes \mathcal{X})=h_{(\sigma_{i})_{i},\nu\otimes \mu}(\mathcal{F}_{1}\otimes \mathcal{X}|\mathcal{F}_{2}\otimes \mathcal{X}:\mathcal{Y}\otimes \mathcal{X}).\]

\end{prop}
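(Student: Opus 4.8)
Write $A=h_{(\sigma_i)_i,\nu}(\mathcal F_1|\mathcal F_2:Y)$, $B=h_{(\sigma_i)_i,\nu\otimes\mu}(\mathcal F_1|\mathcal F_2\otimes\mathcal X:Y\times X)$ and $C=h_{(\sigma_i)_i,\nu\otimes\mu}(\mathcal F_1\otimes\mathcal X|\mathcal F_2\otimes\mathcal X:Y\times X)$. The plan is to deduce $B=C$ and $B\le A$ from the formal properties of Section \ref{S:relent}, and then $A\le B$ from the strong soficity of $G\cc(X,\mathcal X,\mu)$. For the formal part: $B\le C$ is immediate from $\mathcal F_1\subseteq\mathcal F_1\otimes\mathcal X$ and Proposition \ref{P:basicpropertiessubalgsalkjalk}(\ref{P:montoneincreaseslahdg}). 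For $C\le B$, I would write $\mathcal F_1\otimes\mathcal X=\mathcal F_1\vee\mathcal X$, apply subadditivity (Proposition \ref{P:basicpropertiessubalgsalkjalk}(\ref{I:subaddalsdhgklahbasic})), and observe that a finite $\mathcal X$-measurable observable is automatically $(\mathcal F_2\otimes\mathcal X)$-measurable, so $h_{(\sigma_i)_i,\nu\otimes\mu}(\mathcal X|\mathcal F_2\otimes\mathcal X:Y\times X)\le 0$ by Proposition \ref{P:basicpropertiesagainlshdl}(\ref{P:trivialboudnaasdlj}). For $B\le A$: for finite $\mathcal F_1$-measurable $\alpha$ and $\mathcal F_2$-measurable $\beta$ on $Y$ and any $\mathcal Y$-measurable $\gamma\ge\alpha\vee\beta$, the quantity $h_{(\sigma_i)_i}(\alpha|\beta:\gamma,F,\delta)$ depends only on $(\gamma^F)_*\nu$ and $(\alpha^F)_*\nu$, which are unchanged when $\alpha,\beta,\gamma$ are regarded on $Y\times X$; since on $Y\times X$ one may moreover use $(\mathcal Y\otimes\mathcal X)$-measurable refinements, this gives $h_{(\sigma_i)_i,\nu\otimes\mu}(\alpha|\beta:Y\times X)\le h_{(\sigma_i)_i,\nu}(\alpha|\beta:Y)$; taking such $\beta$ in the infimum defining $h_{(\sigma_i)_i,\nu\otimes\mu}(\alpha|\mathcal F_2\otimes\mathcal X:Y\times X)$, infimizing over $\beta$ and supremizing over $\alpha$ yields $B\le A$.

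\emph{Reduction of $A\le B$.} It suffices to prove $h_{(\sigma_i)_i,\nu}(\alpha|\mathcal F_2:Y)\le h_{(\sigma_i)_i,\nu\otimes\mu}(\alpha|\mathcal F_2\otimes\mathcal X:Y\times X)$ for each finite $\mathcal F_1$-measurable $\alpha$ on $Y$. I would first record two elementary monotonicities of $h_{(\sigma_i)_i}(\alpha|\beta:\gamma)$, proved directly from the definitions as in Proposition \ref{P:basicpropertieslakdhgkl}: it is nonincreasing under refining the conditioning observable $\beta$ (keeping $\alpha\le\gamma$, $\beta\le\gamma$), and nonincreasing under refining $\gamma$ (keeping $\alpha,\beta\le\gamma$) — in each case every approximation on the left occurs on the right after the obvious relabelling. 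Feeding these into Proposition \ref{P;generatohklshklas}, applied on $Y\times X$ with the algebras of finite unions of measurable rectangles as generating subalgebras of $\mathcal F_2\otimes\mathcal X$ and of $\mathcal Y\otimes\mathcal X$, shows that $h_{(\sigma_i)_i,\nu\otimes\mu}(\alpha|\mathcal F_2\otimes\mathcal X:Y\times X)$ is the infimum of $h_{(\sigma_i)_i,\nu\otimes\mu}(\alpha|\beta_Y\otimes\beta_X:\gamma_Y\otimes\gamma_X)$ over finite $\mathcal F_2$-measurable $\beta_Y$ and $\mathcal Y$-measurable $\gamma_Y\ge\alpha\vee\beta_Y$ on $Y$ and finite $\mathcal X$-measurable $\beta_X\le\gamma_X$ on $X$. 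Using the first monotonicity ($\beta_Y\otimes\beta_X\le\beta_Y\otimes\gamma_X\le\gamma_Y\otimes\gamma_X$) together with $h_{(\sigma_i)_i,\nu}(\alpha|\mathcal F_2:Y)\le h_{(\sigma_i)_i,\nu}(\alpha|\beta_Y:\gamma_Y)$, the whole inequality reduces to the single estimate
\[h_{(\sigma_i)_i,\nu}(\alpha|\beta_Y:\gamma_Y)\ \le\ h_{(\sigma_i)_i,\nu\otimes\mu}(\alpha|\beta_Y\otimes\eta:\gamma_Y\otimes\eta)\]
for all such $\alpha,\beta_Y,\gamma_Y$ on $Y$ and every finite $\mathcal X$-measurable $\eta$ on $X$.

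\emph{The lifting step.} Here I would fix compact models with $Y,X$ compact metrizable, $G$ acting by homeomorphisms, $\nu\in\Prob_G(Y)$, $\mu\in\Prob_G(X)$, and a sequence $\mu_i\in\Prob(X^{d_i})$ with $\mu_i\to^{lde}\mu$. Fix finite $F\subseteq G$ and $\delta>0$. The key input, extracted from $\mu_i\to^{lde}\mu$ via the lifting results of \cite{AustinAdd} in the functional-analytic form of \cite{Me12} and Proposition \ref{P:alternateformulation}, is that there are a finite $F'\supseteq F$ and $\delta'\in(0,\delta)$ so that for all large $i$ and \emph{every} $\phi_Y\in\AP(\gamma_Y,F',\delta',\sigma_i)$,
\[\mu_i\big(\{\phi_X\in X^{d_i}:(\phi_Y,\phi_X)\in\AP(\gamma_Y\otimes\eta,F,\delta,\sigma_i)\}\big)\ \ge\ \tfrac12 .\]
Then, passing to a subsequence realizing the $\limsup$ for $h_{(\sigma_i)_i,\nu}(\alpha|\beta_Y:\gamma_Y,F',\delta')$, I would choose a nearly optimal $\psi_Y^{(i)}\in\AP(\beta_Y:\gamma_Y,F',\delta',\sigma_i)$ and, for each label $a\in\AP(\alpha|\psi_Y^{(i)}:\gamma_Y,F',\delta',\sigma_i)$, a lift $\phi_{Y,a}^{(i)}\in\AP(\gamma_Y,F',\delta',\sigma_i)$ with $\beta_Y$-part $\psi_Y^{(i)}$ and $\alpha$-part $a$. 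Averaging the estimate above over these labels (Fubini in $\mu_i$) produces one $\phi_X^{(i)}$ with $(\phi_{Y,a}^{(i)},\phi_X^{(i)})\in\AP(\gamma_Y\otimes\eta,F,\delta,\sigma_i)$ for at least half the labels $a$; all these pairs share the $\beta_Y\otimes\eta$-part $(\psi_Y^{(i)},\phi_X^{(i)})$ and have pairwise distinct $\alpha$-parts, so $|\AP(\alpha|(\psi_Y^{(i)},\phi_X^{(i)}):\gamma_Y\otimes\eta,F,\delta,\sigma_i)|\ge\tfrac12|\AP(\alpha|\psi_Y^{(i)}:\gamma_Y,F',\delta',\sigma_i)|$. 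Taking $\tfrac1{d_i}\log$ and letting $i\to\infty$ gives $h_{(\sigma_i)_i,\nu\otimes\mu}(\alpha|\beta_Y\otimes\eta:\gamma_Y\otimes\eta,F,\delta)\ge h_{(\sigma_i)_i,\nu}(\alpha|\beta_Y:\gamma_Y,F',\delta')$; since $(F',\delta')$ can be made arbitrarily fine as $(F,\delta)$ is, infimizing over $F,\delta$ proves the displayed estimate, hence $A\le B$, and with $B=C\le A$ this finishes the proof. The step I expect to be the main obstacle is this lifting step: extracting from $\mu_i\to^{lde}\mu$ the $\mu_i$-estimate above \emph{uniformly} over all microstates $\phi_Y$, and then arranging, through the averaging, that a single $X$-microstate $\phi_X^{(i)}$ simultaneously completes exponentially many $Y$-microstates carrying distinct $\alpha$-labels; everything else is bookkeeping with the properties of Section \ref{S:relent}.
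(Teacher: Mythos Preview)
Your proposal is correct and follows essentially the same approach as the paper: the formal inequalities $B\le C$, $C\le B$, $B\le A$ are handled identically via Propositions \ref{P:basicpropertiessubalgsalkjalk} and \ref{P:basicpropertiesagainlshdl}, and the main inequality $A\le B$ is obtained by the same mechanism---Austin's Theorem~A gives the uniform $\mu_i$-mass $\ge\tfrac12$ estimate on good $X$-microstates for each $Y$-microstate, and a Fubini/averaging argument over a maximal family of $\gamma_Y$-lifts with fixed $\beta_Y$-part and distinct $\alpha$-parts produces a single $X$-microstate that works for half of them. The only cosmetic differences are that the paper passes to the symbolic factor $(C^{G},(\Delta_C)_*\mu)$ rather than working with a compact model of $X$ directly, and that it avoids your (harmless but unnecessary) subsequence by establishing the pointwise inequality for all large $i$.
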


\begin{proof}
It follows from Proposition \ref{P:basicpropertiessubalgsalkjalk} (\ref{P:montoneincreaseslahdg}) and (\ref{P''monotonedecreaseaslJHlkja}) that
\[h_{(\sigma_{i})_{i},\nu}(\mathcal{F}_{1}|\mathcal{F}_{2}:\mathcal{Y})\geq h_{(\sigma)_{i},\nu\otimes \mu}(\mathcal{F}_{1}|\mathcal{F}_{2}\otimes \mathcal{X}:\mathcal{Y}\otimes \mathcal{X}),\]
\[ h_{(\sigma)_{i},\nu\otimes \mu}(\mathcal{F}_{1}|\mathcal{F}_{2}\otimes \mathcal{X}:\mathcal{Y}\otimes \mathcal{X})\leq h_{(\sigma_{i})_{i},\nu\otimes \mu}(\mathcal{F}_{1}\otimes \mathcal{X}|\mathcal{F}_{2}\otimes \mathcal{X}:\mathcal{Y}\otimes \mathcal{X}).\]
Let $\mathcal{N}$ be the algebra of null sets in $Y.$ By Proposition \ref{P:basicpropertiessubalgsalkjalk} (\ref{I:subaddalsdhgklahbasic}),
\begin{align*}
h_{(\sigma_{i})_{i},\nu\otimes \mu}(\mathcal{F}_{1}\otimes \mathcal{X}|\mathcal{F}_{2}\otimes \mathcal{X}:\mathcal{Y}\otimes \mathcal{X})&\leq h_{(\sigma)_{i},\nu\otimes \mu}(\mathcal{F}_{1}|\mathcal{F}_{2}\otimes \mathcal{X}:\mathcal{Y}\otimes \mathcal{X})+h_{(\sigma_{i})_{i},\nu\otimes \mu}(\mathcal{N}\otimes \mathcal{X}|\mathcal{F}_{2}\otimes \mathcal{X}:\mathcal{Y}\otimes \mathcal{X})\\
&\leq h_{(\sigma)_{i},\nu\otimes \mu}(\mathcal{F}_{1}|\mathcal{F}_{2}\otimes \mathcal{X}:\mathcal{Y}\otimes \mathcal{X}).
\end{align*}
So we only have to prove that
\[h_{(\sigma_{i})_{i},\nu}(\mathcal{F}_{1}|\mathcal{F}_{2}:\mathcal{Y})\leq h_{(\sigma_{i})_{i},\nu\otimes \mu}(\mathcal{F}_{1}|\mathcal{F}_{2}\otimes \mathcal{X}:\mathcal{Y}\otimes \mathcal{X}).\]

For $j=1,2,$ fix finite $\mathcal{F}_{j}$-measurable observables $\alpha_{j}\colon Y\to A_{j}.$ Fix a $\mathcal{Y}$-measurable observable $\alpha\colon Y\to A$ with $\alpha\geq \alpha_{1}\vee \alpha_{2}.$
For $j=1,2$ let $\rho_{A_{j}}\colon A\to A_{j}$ be such that $\rho_{A_{j}}\circ \alpha=\alpha_{j}$ almost everywhere.
Let $\beta\colon X\to B,$ $\gamma\colon X\to C$ be measurable observables with $\beta\leq \gamma.$ Let $q\colon C\to B$ be such that $q\circ \gamma=\beta$ almost everywhere. Define $\Delta_{C}\colon X\to C^{G}$ by
\[\Delta_{C}(x)(g)=\gamma(g^{-1}x),\]
and let $\eta=(\Delta_{C})_{*}\mu.$ Since $G\cc (X,\mathcal{X},\mu)$ is strongly sofic with respect to $(\sigma_{i})_{i},$ so is the factor $(C^{G},\eta).$ So we may find a sequence $\eta_{i}\in \Prob(C^{d_{i}})$ with $\eta_{i}\to^{lde}\eta.$

	Fix a finite $F'\subseteq G$ and a $\delta'>0.$ Since $\eta_{i}\to^{lde}\eta$ we may find, by Theorem A of \cite{AustinAdd}, a finite $F\subseteq G,$ a $\delta>0,$ and an $I\in \N$ so that
	\[\eta_{i}(\{\psi:\phi\vee\psi\in \AP(\alpha\otimes \gamma,F',\delta',\sigma_{i})\})\geq \frac{1}{2}\]
 for $i\geq I$ and all $\phi\in \AP(\alpha,F,\delta,\sigma_{i})$.
Now fix an $i\geq I$ and a $\phi_{2}\in \AP(\alpha_{2}:\alpha,F,\delta,\sigma_{i}).$ Choose a
\[S\subseteq \{\phi\in \AP(\alpha,F,\delta,\sigma_{i}):\rho_{A_{2}}\circ \phi=\phi_{2}\}\]
with $|S|=|\AP(\alpha_{1}|\phi_{2}:\alpha,F,\delta,\sigma_{i})|$ and $\{\rho_{A_{1}}\circ \phi:\phi\in S\}=\AP(\alpha_{1}|\phi_{2}:\alpha,F,\delta,\sigma_{i}).$ From our choice of $F,\delta$ we have  that
\[u_{S}\otimes \eta_{i}(\{(\phi,\psi):\phi\vee \psi\in \AP(\alpha\otimes \gamma,F',\delta',\sigma_{i})\})\geq \frac{1}{2},\]
by Fubini's theorem. Applying Fubini's theorem again, we may find a $\psi\in \AP(\gamma,F',\delta',\sigma_{i})$ so that
\[|\{\phi\in S:\phi\vee \psi\in \AP(\alpha\otimes \gamma,F',\delta',\sigma_{i})\}|\geq \frac{1}{2}|S|=\frac{1}{2}|\AP(\alpha_{1}|\phi_{2}:\alpha,F,\delta,\sigma_{i})|.\]
Set $\psi_{B}=q\circ \psi.$ By construction, for all $i\geq I$
\[|\AP(\alpha_{1}\otimes 1|\phi_{2}\vee \psi_{B}:\alpha\otimes \gamma,F',\delta',\sigma_{i})|\geq \frac{1}{2}|\AP(\alpha_{1}|\phi_{2}:\alpha,F,\delta,\sigma_{i})|,\]
and so
\[\sup_{\phi_{2}\in \AP(\alpha_{2}:\alpha,F,\delta,\sigma_{i})} \frac{1}{2}|\AP(\alpha_{1}|\phi_{2}:\alpha,F,\delta,\sigma_{i})|\leq \sup_{\omega\in \AP(\alpha_{2}\otimes \beta:\alpha\otimes \gamma,F',\delta',\sigma_{i})}|\AP(\alpha_{1}\otimes 1|\omega:\alpha\otimes \gamma,F',\delta',\sigma_{i})|.\]
Thus
\[h_{(\sigma_{i})_{i},\nu}(\alpha_{1}|\alpha_{2}:\alpha,F,\delta)\leq h_{(\sigma_{i})_{i},\nu\otimes \mu}(\alpha_{1}\otimes 1|\alpha_{2}\otimes \beta:\alpha\otimes \gamma,F',\delta',\sigma_{i}).\]
A fortiori,
\[h_{(\sigma_{i})_{i},\nu}(\alpha_{1}|\mathcal{F}_{2}:\mathcal{Y})\leq  h_{(\sigma_{i})_{i},\nu\otimes \mu}(\alpha_{1}\otimes 1|\alpha_{2}\otimes \beta:\alpha\otimes \gamma,F',\delta',\sigma_{i}).\]
Taking the infimum over all $\beta,\alpha_{2},\alpha,\gamma,F',\delta'$ and applying Proposition \ref{P;generatohklshklas}  (\ref{I:conditionainf}) shows that
\[h_{(\sigma_{i})_{i},\nu}(\alpha_{1}|\mathcal{F}_{2}:\mathcal{Y})\leq  h_{(\sigma_{i})_{i},\nu\otimes \mu}(\alpha_{1}\otimes 1|\mathcal{F}_{2}\otimes \mathcal{X}:\mathcal{Y}\otimes \mathcal{X}).\]
Taking the supremum over all $\alpha_{1}$ completes the proof.

\end{proof}

\section{A Product Formula For Outer Pinsker Factors}\label{S:PinskerProd}

\subsection{Preliminary results on relative entropy}\label{S:PinskerProdhalf}

 Throughout this subsection, we fix a Lebesgue space $(X,\mathcal{X},\mu)$ and an action $G\actons (X,\mathcal{X},\mu)$ by measure-preserving transformations.

In this section, we prove a product formula for outer Pinsker factors of strongly sofic actions. We shall mostly follow the methods in \cite{GlasThoWe} by analyzing actions with large automorphism group.  By the automorphism group of an action we mean the following: if $H$ is a countable discrete group, if $(X_{0},\mu_{0})$ is a Lebesgue probability space, and $H\cc (X_{0},\mu_{0})$ is a probability measure preserving action, then we let $\Aut(H\cc (X_{0},\mu_{0}))$ be all measure-preserving transformations $\phi\colon X_{0}\to X_{0}$ so that $\phi(hx)=h\phi(x)$ for almost every $x\in X_{0}$ and all $h\in H.$ The main new technique in this section is   Lemma \ref{L:ultraproductrick}. It turns out that this lemma allows us to remove the freeness assumption present in \cite{GlasThoWe} without passing to orbit equivalence as in \cite{Danilenko}.

We need some notation for operators. If $\mathcal{H}$ is a Hilbert space, we use $B(\mathcal{H})$ for the algebra of bounded, linear operators on $\mathcal{H}.$  If $(X_{0},\mu_{0})$ is a Lebesgue probability space and $f\in L^{\infty}(X_{0},\mu_{0}),$ we define $m_{f}\in B(L^{2}(X_{0},\mu_{0}))$ by $(m_{f}\xi)(x)=f(x)\xi(x).$ If $(Y,\nu)$ is another Lebesgue probability space and $T\in B(L^{2}(Y,\nu)),$ we define $1\otimes T\in B(L^{2}(X_{0}\times Y,\mu_{0}\otimes \nu))$ by
\[(1\otimes T)(f)(x)=T(f(x)),\]
where in the above formula we are using the identification $L^{2}(X_{0}\times Y,\mu\otimes \nu)\cong L^{2}(X_{0},\mu,L^{2}(Y,\nu)).$
We also use (albeit very briefly) the strong operator topology on $B(L^{2}(X_{0}\times Y,\mu\otimes \nu).$ The  strong operator topology on $B(L^{2}(X_{0}\times Y,\mu\otimes \nu)$ is simply the topology of pointwise convergence in norm (i.e. the product topology) on $B(L^{2}(X_{0}\times Y,\mu\otimes \nu)).$ We often abbreviate strong operator topology by SOT.
The following argument follows closely that of Lemma 3.2 in \cite{GlasThoWe}.

\begin{lem}\label{L:cpeextension}
Let $\mathcal{F}\subseteq \mathcal{X}$ be a complete, $G$-invariant sub-sigma-algebra of $\mathcal{X}.$ Suppose that $G\cc (Y,\mathcal{Y},\nu)$ is a probability measure-preserving action on a Lebesgue probability space. Assume that $G\cc (Y,\mathcal{Y},\nu)$ is strongly sofic with respect to $(\sigma_{i})_{i}$. If $\Aut(G\cc (Y,\mathcal{Y},\nu))$ acts ergodically on $(Y,\mathcal{Y},\nu),$ then
\[\Pi_{(\sigma_{i})_{i}}(\mu\otimes \nu|\mathcal{F}\otimes \mathcal{Y})=\Pi_{(\sigma_{i})_{i}}(\mu|\mathcal{F})\otimes \mathcal{Y}.\]

\end{lem}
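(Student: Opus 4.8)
The plan is to prove the two inclusions separately, with the nontrivial one being $\Pi_{(\sigma_{i})_{i}}(\mu\otimes \nu|\mathcal{F}\otimes \mathcal{Y})\subseteq \Pi_{(\sigma_{i})_{i}}(\mu|\mathcal{F})\otimes \mathcal{Y}$. For the reverse inclusion, one first notes that $\mathcal{Y}\subseteq \mathcal{F}\otimes\mathcal{Y}$ contributes zero relative entropy tautologically, and that $\Pi_{(\sigma_{i})_{i}}(\mu|\mathcal{F})\otimes\{\emptyset,Y\}$ has zero relative entropy over $\mathcal{F}\otimes\mathcal{Y}$ in the presence of $X\times Y$: indeed by Proposition \ref{P:wcindeplift} applied with the roles of $X$ and $Y$ switched (so that strong soficity of $G\cc(Y,\nu)$ is used), the entropy of $\Pi_{(\sigma_{i})_{i}}(\mu|\mathcal{F})$ relative to $\mathcal{F}$ in the presence of $X$ equals its entropy relative to $\mathcal{F}\otimes\mathcal{Y}$ in the presence of $X\times Y$, and the former is $\leq 0$ by definition of the outer Pinsker sigma-algebra. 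Subadditivity (Proposition \ref{P:basicpropertiessubalgsalkjalk}(\ref{I:subaddalsdhgklahbasic})) then gives that the join $\Pi_{(\sigma_{i})_{i}}(\mu|\mathcal{F})\otimes\mathcal{Y}$ also has zero relative entropy over $\mathcal{F}\otimes\mathcal{Y}$, hence lies in $\Pi_{(\sigma_{i})_{i}}(\mu\otimes\nu|\mathcal{F}\otimes\mathcal{Y})$ by the maximality in Corollary \ref{C:defnpinskfact}.

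For the hard inclusion I would follow the Glasner--Thouvenot--Weiss strategy exploiting the ergodicity of $\Aut(G\cc(Y,\nu))$ on $(Y,\nu)$. Take $A\in \Pi_{(\sigma_{i})_{i}}(\mu\otimes\nu|\mathcal{F}\otimes\mathcal{Y})$; we want to show $\chi_A$ is $\Pi_{(\sigma_{i})_{i}}(\mu|\mathcal{F})\otimes\mathcal{Y}$-measurable. For each $\Phi\in\Aut(G\cc(Y,\nu))$, the map $\id_X\times\Phi$ is a measure-preserving automorphism of $G\cc(X\times Y,\mu\otimes\nu)$ fixing $\mathcal{F}\otimes\mathcal{Y}$ (since $\Phi$ preserves $\mathcal{Y}$), hence it preserves $\Pi_{(\sigma_{i})_{i}}(\mu\otimes\nu|\mathcal{F}\otimes\mathcal{Y})$; so $(\id_X\times\Phi)^{-1}A$ also lies there, and more importantly the relative entropy over $\mathcal{F}\otimes\mathcal{Y}$ of the sigma-algebra generated by $\{(\id_X\times\Phi)A:\Phi\in\Aut(G\cc(Y,\nu))\}$ together with all its $G$-translates is still $\leq 0$. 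The key point is then to show that the $\Aut$-invariant hull of (the $G$-invariant sigma-algebra generated by) $\chi_A$, intersected appropriately with the second coordinate, must already contain enough of $\mathcal{Y}$; concretely one shows that the conditional measure of $A$ over $\mathcal{X}$ (equivalently, the function $y\mapsto$ the fiberwise data of $A$) is $\Aut(G\cc(Y,\nu))$-invariant modulo the Pinsker part, so by ergodicity of $\Aut$ on $(Y,\nu)$ it must be essentially constant in $y$ on the complement of $\Pi_{(\sigma_{i})_{i}}(\mu|\mathcal{F})$. This forces $\chi_A$ to be measurable with respect to $\Pi_{(\sigma_{i})_{i}}(\mu|\mathcal{F})\otimes\mathcal{Y}$.

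The main obstacle is making the ergodicity argument precise: one must disintegrate over the second coordinate and argue that the ``$X$-part'' of $A$ modulo the outer Pinsker algebra of $X$ defines a genuinely $\Aut(G\cc(Y,\nu))$-equivariant assignment $y\mapsto (\text{class in }\mathcal{X}/\Pi_{(\sigma_{i})_{i}}(\mu|\mathcal{F}))$, and the relevant orbit-counting/entropy bound must survive averaging over $\Aut$. This is where one invokes Lemma \ref{L:ultraproductrick} (as advertised in the section introduction) to avoid any freeness hypothesis, and where Proposition \ref{P:wcindeplift} is used once more to identify relative entropies in the product with relative entropies in $X$ alone. I expect the remaining steps --- closure of the outer Pinsker algebra under the listed operations, reduction to indicator functions via Corollary \ref{C:defnpinskfact}, and the subadditivity bookkeeping --- to be routine given the machinery already set up in Sections \ref{S:relent} and \ref{S:dq}.
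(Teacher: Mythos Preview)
Your treatment of the easy inclusion $\Pi_{(\sigma_{i})_{i}}(\mu|\mathcal{F})\otimes\mathcal{Y}\subseteq\Pi_{(\sigma_{i})_{i}}(\mu\otimes\nu|\mathcal{F}\otimes\mathcal{Y})$ is fine and matches the paper (which simply calls it trivial; your unpacking via Proposition~\ref{P:wcindeplift} and subadditivity is correct).

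For the hard inclusion, however, there is a genuine gap. You correctly observe that $\id_{X}\times\Phi$ preserves $\mathcal{U}:=\Pi_{(\sigma_{i})_{i}}(\mu\otimes\nu|\mathcal{F}\otimes\mathcal{Y})$ for every $\Phi\in\Aut(G\cc(Y,\nu))$, i.e.\ the \emph{algebra} $\mathcal{U}$ is $\Aut$-invariant. But you then slide into treating an individual set $A\in\mathcal{U}$ as if its fibre map $y\mapsto A_{y}$ were $\Aut$-invariant (``the conditional measure of $A$ over $\mathcal{X}$ \ldots\ is $\Aut$-invariant modulo the Pinsker part''). That is not what $\Aut$-invariance of $\mathcal{U}$ gives you: $(\id_{X}\times\Phi)^{-1}A$ lies in $\mathcal{U}$, but there is no reason it should equal $A$ modulo anything. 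Ergodicity of $\Aut$ on $(Y,\nu)$ cannot be applied to the slices $A_{y}$ directly, because the relevant function of $y$ is not $\Aut$-invariant. Your disintegration sketch therefore does not close. Also, your invocation of Lemma~\ref{L:ultraproductrick} is misplaced: that lemma is used in the \emph{next} theorem to manufacture the ergodicity hypothesis you already have here; it plays no role inside the present proof.

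The paper resolves this by working at the level of the conditional expectation projection rather than individual sets. Since $\mathcal{Y}\subseteq\mathcal{U}$, the projection $\E_{\mathcal{U}}$ on $L^{2}(X\times Y)$ commutes with $1\otimes m_{f}$ for all $f\in L^{\infty}(Y,\nu)$; since $\mathcal{U}$ is $\Aut$-invariant, $\E_{\mathcal{U}}$ also commutes with $1\otimes U_{\alpha}$ for all $\alpha\in\Aut(G\cc(Y,\nu))$. Ergodicity of $\Aut$ forces $\{m_{f}\}'\cap\{U_{\alpha}\}'=\C 1$ in $B(L^{2}(Y))$, so by the tensor product commutant theorem $\E_{\mathcal{U}}=P\otimes 1$ for some projection $P$ on $L^{2}(X)$. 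This yields $\mathcal{U}=\widehat{\mathcal{U}}\otimes\mathcal{Y}$ for some $\widehat{\mathcal{U}}\subseteq\mathcal{X}$, and \emph{then} Proposition~\ref{P:wcindeplift} gives $h_{(\sigma_{i})_{i},\mu}(\widehat{\mathcal{U}}|\Pi_{(\sigma_{i})_{i}}(\mu|\mathcal{F}):X)=h_{(\sigma_{i})_{i},\mu\otimes\nu}(\mathcal{U}|\Pi_{(\sigma_{i})_{i}}(\mu|\mathcal{F})\otimes\mathcal{Y}:X\times Y)\leq 0$, whence $\widehat{\mathcal{U}}\subseteq\Pi_{(\sigma_{i})_{i}}(\mu|\mathcal{F})$. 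The operator-algebraic step is exactly what converts invariance of the \emph{algebra} into the product form $\widehat{\mathcal{U}}\otimes\mathcal{Y}$; your set-by-set disintegration attempt does not accomplish this.
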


\begin{proof}
Let $\mathcal{Z}=\Pi_{(\sigma_{i})_{i}}(\mu|\mathcal{F})$ and $\mathcal{U}=\Pi_{(\sigma_{i})_{i}}(\mu\otimes \nu|\mathcal{F}\otimes \mathcal{Y}).$ We claim that $\mathcal{U}=\widehat{\mathcal{U}}\otimes \mathcal{Y}$ for some sigma-algebra $\widehat{\mathcal{U}}\subseteq \mathcal{X}.$ If we grant this claim, then by Proposition \ref{P:wcindeplift}
 \[h_{(\sigma_{i})_{i},\mu}(\widehat{\mathcal{U}}|\mathcal{Z}:\mathcal{X})=h_{(\sigma_{i})_{i},\mu\otimes \nu}(\widehat{\mathcal{U}}\otimes \mathcal{Y}|\mathcal{Z}\otimes \mathcal{Y}:\mathcal{X}\otimes \mathcal{Y})\leq 0.\]
By definition of $\Pi_{(\sigma_{i})_{i}}(\mu|\mathcal{F}),$ this implies that $\widehat{\mathcal{U}}\subseteq \mathcal{Z}$ and this proves the lemma. So it is enough to show that $\mathcal{U}=\widehat{\mathcal{U}}\otimes \mathcal{Y}.$

	For this, consider the conditional expectation $\E_{\mathcal{U}}$ as a projection operator
	\[\E_{\mathcal{U}}\colon L^{2}(X\times Y,\mu\otimes \nu)\to L^{2}(X\times Y,\mu\otimes \nu).\]
 For $\alpha\in \Aut(G\cc (Y,\nu)),$ define a unitary operator $U_{\alpha}\colon L^{2}(Y,\nu)\to L^{2}(Y,\nu)$ by $(U_{\alpha}\xi)=\xi\circ \alpha^{-1}.$  Recall that if $E\subseteq B(\mathcal{H})$ for some Hilbert space $\mathcal{H},$ then
\[E'=\{T\in B(\mathcal{H}):TS=ST\mbox{ for all $S\in E$}\}.\]
We first show the following:

\emph{Claim 1: if $T\in B(L^{2}(Y,\nu))$ and $T\in (\{m_{f}:f\in L^{\infty}(Y,\nu)\}\cup\{U_{\alpha}:\alpha\in \Aut(G\cc (Y,\nu))\})',$ then $T\in \C 1.$}

 To see this, first note that since $T\in \{m_{f}:f\in L^{\infty}(Y,\nu)\}',$ we have that $T=m_{k}$ for some $k\in L^{\infty}(Y,\nu)$ by Proposition 12.4 of \cite{ConwayOT}. As
$U_{\alpha}^{*}m_{k}U_{\alpha}=m_{k\circ \alpha^{-1}}$ for any $\alpha\in \Aut(G\cc (Y,\nu)),$ we see that $k=k\circ \alpha^{-1}$ for all $\alpha\in \Aut(G\cc (Y,\nu)).$ By ergodicity of the action of $\Aut(G\cc (Y,\nu)),$ we see that $T\in \C 1.$  This shows Claim 1.
	
	Note that
 \[\E_{\mathcal{U}}\in(\{1\otimes m_{f}:f\in L^{\infty}(Y,\nu)\}\cup\{1\otimes U_{\alpha}:\alpha\in \Aut(G\cc (Y,\nu))\})'.\]
 By \cite{Taka} Theorem IV.5.9, this implies that
 \begin{align*}
\E_{\mathcal{U}}&\in \overline{\Span(\{T\otimes S:T\in B(L^{2}(X,\mu)),S\in  (\{m_{f}:f\in L^{\infty}(Y,\nu)\}\cup\{U_{\alpha}:\alpha\in \Aut(G\cc (Y,\nu))\})'\})}^{SOT}\\
&=B(L^{2}(X,\mu))\otimes \C 1,
\end{align*}
 the last equality following from Claim 1. So $\E_{\mathcal{U}}=P\otimes 1$ for some projection $P\in B(L^{2}(X,\mu)).$

Let $\mathcal{\widehat{U}}=\{U\in \mathcal{X}: U\times Y\in \mathcal{U}\}.$ If we show that $P(L^{2}(X,\mathcal{X},\mu))=L^{2}(X,\widehat{\mathcal{U}},\mu),$ we will be done. It is clear that $P(L^{2}(X,\mathcal{X},\mu))\supseteq L^{2}(X,\widehat{\mathcal{U}},\mu),$ so it is enough to show that  $P(L^{2}(X,\mathcal{X},\mu))\subseteq L^{2}(X,\widehat{\mathcal{U}},\mu).$ To show this, it is enough to show that if $\xi\in P(L^{2}(X,\mu))$ and $A\subseteq \C$ is Borel, then $\xi^{-1}(A)\in \mathcal{\widehat{U}}.$ Fix such a $\xi,A.$ Since $\E_{\mathcal{U}}=P\otimes 1,$ and $P(\xi)=\xi,$ we know that $\xi\otimes 1$ is $\mathcal{U}$-measurable, because $\mathcal{U}$ is complete. So $1_{A}\circ (\xi\otimes 1)$ is $\mathcal{U}$-measurable. Since $1_{\xi^{-1}(A)\times Y}=1_{A}\circ (\xi\otimes 1),$
it follows that $\xi^{-1}(A)\times Y$ is $\mathcal{U}$-measurable, so $\xi^{-1}(A)\in \mathcal{\widehat{U}}.$

\end{proof}

We will, in fact, show that if $G\actson (Y,\nu)$ is strongly sofic with respect to $(\sigma_{i})_{i},$ then
\[\Pi_{(\sigma_{i})_{i}}(\mu\otimes \nu|\mathcal{F}\otimes \mathcal{Y})=\Pi_{(\sigma_{i})_{i}}(\mu|\mathcal{F})\otimes \mathcal{Y}\]
without assuming that $\Aut(G\cc (Y,\nu))$ acts ergodically. We reduce the general claim to the above lemma by showing that every strongly sofic action is a factor of an action which is both strongly sofic and  has a large automorphism group.

\begin{lem}\label{L:ultraproductrick}
Let $(Y,\nu)$ be a Lebesgue probability space with $G\cc (Y,\nu)$ by measure-preserving transformations.
Suppose that $G\cc (Y,\nu)$ is strongly sofic with respect to $(\sigma_{i})_{i}.$ Then there are a Lebesgue probability space $(Z,\zeta),$ and a measure-preserving action $G\cc (Z,\zeta)$ which factors onto $G\cc (Y,\nu)$ such that $G\actson (Z,\zeta)$ has the following properties:
\begin{itemize}
\item $G\actson (Z,\zeta)$ is strongly sofic with respect to $(\sigma_{i})_{i},$ and
\item $\Aut(G\actson (Z,\zeta))\actson(Z,\zeta)$ ergodically.
\end{itemize}

\end{lem}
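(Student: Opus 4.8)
The plan is to take $Y=X^{\Z}$ with $\nu=\mu^{\otimes\Z}$ and $G$ acting coordinatewise, $g\cdot(x_{n})_{n\in\Z}=(gx_{n})_{n\in\Z}$. This is standard, and the projection $\pi_{0}\colon Y\to X$ onto the $0$-th coordinate is $G$-equivariant with $(\pi_{0})_{*}\nu=\mu$, so $G\cc(Y,\nu)$ factors onto $G\cc(X,\mu)$. It then remains to check the two bulleted properties.

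For the automorphism group: the shift $T\colon Y\to Y$, $T((x_{n})_{n})=(x_{n+1})_{n}$, is an invertible measure-preserving transformation commuting with the coordinatewise $G$-action, so $T\in\Aut(G\cc(Y,\nu))$. The $\Z$-action it generates on $(X^{\Z},\mu^{\otimes\Z})$ is a Bernoulli shift, hence ergodic, so a fortiori the larger group $\Aut(G\cc(Y,\nu))$ acts ergodically on $(Y,\nu)$.

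The substantive point is that $G\cc(Y,\nu)$ is strongly sofic. I would first record the sub-claim that \emph{a product of two strongly sofic actions is strongly sofic}. This follows from the characterization, due to [Proposition 8.4]\cite{AustinAdd} and quoted in the introduction, that $G\cc(Z,\zeta)$ is strongly sofic with respect to $(\sigma_{i})_{i}$ if and only if for every probability measure-preserving action $G\cc(W,\omega)$ with nonempty microstate space and every microstate $\phi$ for $G\cc(W,\omega)$, the microstate $\phi$ lifts to a microstate for $G\cc(Z\times W,\zeta\otimes\omega)$. Granting this, if $G\cc(X_{1},\mu_{1})$ and $G\cc(X_{2},\mu_{2})$ are both strongly sofic and $\phi$ is any microstate for a probability measure-preserving action $G\cc(W,\omega)$, then applying the criterion to $X_{2}$ produces a microstate $\phi_{1}$ for $X_{2}\times W$ lifting $\phi$ (in particular the microstate space of $X_{2}\times W$ is nonempty); applying it again to $X_{1}$ with target $X_{2}\times W$ produces a microstate $\phi_{2}$ for $X_{1}\times(X_{2}\times W)\cong(X_{1}\times X_{2})\times W$ lifting $\phi_{1}$, and hence lifting $\phi$. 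Thus $X_{1}\times X_{2}$ is strongly sofic, and by induction $G\cc(X^{n},\mu^{\otimes n})$ is strongly sofic for every $n\in\N$.

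Finally, $G\cc(X^{\Z},\mu^{\otimes\Z})$ is the inverse limit of the system $G\cc(X^{\{-n,\dots,n\}},\mu^{\otimes\{-n,\dots,n\}})$, $n\in\N$, with the coordinate-restriction maps, which are $G$-equivariant precisely because $G$ acts coordinatewise (the shift is irrelevant to this system). Each term is isomorphic to $G\cc(X^{2n+1},\mu^{\otimes(2n+1)})$ and is therefore strongly sofic by the previous paragraph, so Corollary \ref{P:inverselimitstronglysofic} (equivalently Theorem \ref{T:strongsoficpermanenceintro}(i)) yields that $G\cc(Y,\nu)$ is strongly sofic, completing the proof. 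The only step that is not pure bookkeeping is the product sub-claim, and even that is a short consequence of Austin's lifting criterion; the one point requiring care is pinning down the precise sense of ``lifts'' so that the two applications of the criterion compose, which is immediate from the formulation recalled in the introduction.
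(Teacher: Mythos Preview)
Your proof is correct and follows essentially the same construction as the paper: take $(Y,\nu)=(X,\mu)^{\Z}$ with the diagonal $G$-action, use the Bernoulli shift to witness ergodicity of $\Aut(G\cc(Y,\nu))$, and deduce strong soficity from the finite powers via Corollary~\ref{P:inverselimitstronglysofic}. The only difference is in how you justify that $G\cc(X,\mu)^{n}$ is strongly sofic: the paper simply asserts this is ``automatic from the definition of strong soficity'' (implicitly using that $\mu_{i}\to^{lde}\mu$ is equivalent to $\mu_{i}^{\otimes k}\to^{le}\mu^{\otimes k}$ for all $k$, from Austin's work), whereas you give an explicit two-step lifting argument via Austin's Proposition~8.4 characterization---both routes are valid and short.
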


\begin{proof}

	Let $(Z,\zeta)=(Y,\nu)^{\Z},$ and let $G\cc (Z,\zeta)$ diagonally.
If $\mu_{k}\in \Prob(Y^{d_{k}})$ and $\mu_{k}\to^{lde}\mu,$ then it follows from \cite[Theorem A]{AustinAdd} that $\mu_{k}^{\otimes n}\to^{lde}\mu^{\otimes n}$ for all $n\in \N.$  By Proposition \ref{P:inverselimitstronglysofic}, the action $G\cc (Z,\zeta)$ is strongly sofic with respect to $(\sigma_{i})_{i}.$ Let $\Z\cc (Z,\zeta)$ be the Bernoulli action. It is clear that $\Z\cc (Z,\zeta)$ commutes with $G\cc (Z,\zeta)$ and that $\Z\cc (Z,\zeta)$ is ergodic. Thus the action $\Aut(G\cc (Z,\zeta))\actson (Z,\zeta)$ is ergodic.

\end{proof}

We now remove the assumption that $\Aut(G\cc (Y,\nu))\actson (Y,\nu)$ is ergodic from Lemma \ref{L:cpeextension}.

\begin{thm}\label{T:halfpinskerprod}
Let $(Y,\mathcal{Y},\nu)$ be a Lebesgue probability space.
Suppose that $G\cc (Y,\mathcal{Y},\nu)$ is a strongly sofic action with respect to $(\sigma_{i})_{i}.$ If $\mathcal{F}\subseteq \mathcal{X}$ is a $G$-invariant, complete sub-sigma-algebra, then
\[\Pi_{(\sigma_{i})_{i}}(\mu\otimes \nu| \mathcal{F}\otimes \mathcal{Y})=\Pi_{(\sigma_{i})_{i}}(\mu|\mathcal{F})\otimes \mathcal{Y}.\]

\end{thm}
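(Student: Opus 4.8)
The strategy is to reduce the general statement to Lemma \ref{L:cpeextension}, where the extra hypothesis that $\Aut(G\cc(Y,\nu))$ acts ergodically is available, by replacing $(Y,\nu)$ with a larger action that has this property. Concretely, I would apply Lemma \ref{L:ultraproductrick} to $G\cc(Y,\mathcal{Y},\nu)$ to obtain a standard probability space $(\widetilde Y,\widetilde{\mathcal{Y}},\widetilde\nu)$ with a measure-preserving action $G\cc(\widetilde Y,\widetilde\nu)$ which is strongly sofic with respect to $(\sigma_i)_i$, which factors onto $G\cc(Y,\nu)$, and for which $\Aut(G\cc(\widetilde Y,\widetilde\nu))$ acts ergodically on $(\widetilde Y,\widetilde\nu)$. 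Identify $\mathcal{Y}$ with a $G$-invariant complete sub-sigma-algebra of $\widetilde{\mathcal{Y}}$ via the factor map, and likewise identify $\mathcal{Y}\otimes\mathcal{X}$ inside $\widetilde{\mathcal{Y}}\otimes\mathcal{X}$ and so on.

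The core computation is then: apply Lemma \ref{L:cpeextension} with the roles ``$X$'' $=X$, ``$\mathcal{F}$'' $=\mathcal{F}$, and ``$Y$'' $=\widetilde Y$, which gives
\[\Pi_{(\sigma_{i})_{i}}(\mu\otimes \widetilde\nu\mid \mathcal{F}\otimes \widetilde{\mathcal{Y}})=\Pi_{(\sigma_{i})_{i}}(\mu\mid\mathcal{F})\otimes \widetilde{\mathcal{Y}}.\]
Now I want to ``descend'' this identity along the factor map $\widetilde Y\to Y$. First, since $G\cc(X\times Y,\mu\otimes\nu)$ is a factor of $G\cc(X\times\widetilde Y,\mu\otimes\widetilde\nu)$ and $\mathcal{F}\otimes\mathcal{Y}\subseteq\mathcal{F}\otimes\widetilde{\mathcal{Y}}$ sits inside $\mathcal{X}\otimes\mathcal{Y}$, Proposition \ref{P:Pinskermonotone}(i) gives $\Pi_{(\sigma_i)_i}(\mu\otimes\nu\mid\mathcal{F}\otimes\mathcal{Y})\subseteq\Pi_{(\sigma_i)_i}(\mu\otimes\widetilde\nu\mid\mathcal{F}\otimes\mathcal{Y})$. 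Combining with Proposition \ref{P:Pinskermonotone}(ii) (monotonicity of the relative Pinsker algebra in the conditioning algebra, since $\mathcal{F}\otimes\mathcal{Y}\subseteq\mathcal{F}\otimes\widetilde{\mathcal{Y}}$) and the displayed identity, every set in $\Pi_{(\sigma_i)_i}(\mu\otimes\nu\mid\mathcal{F}\otimes\mathcal{Y})$ lies in $\Pi_{(\sigma_i)_i}(\mu\mid\mathcal{F})\otimes\widetilde{\mathcal{Y}}$; but it also lies in $\mathcal{X}\otimes\mathcal{Y}$, and since $\bigl(\Pi_{(\sigma_i)_i}(\mu\mid\mathcal{F})\otimes\widetilde{\mathcal{Y}}\bigr)\cap(\mathcal{X}\otimes\mathcal{Y})=\Pi_{(\sigma_i)_i}(\mu\mid\mathcal{F})\otimes\mathcal{Y}$ (by a standard product-sigma-algebra argument, using that $\mathcal{Y}$ is the trace of $\widetilde{\mathcal{Y}}$), this proves the inclusion $\Pi_{(\sigma_i)_i}(\mu\otimes\nu\mid\mathcal{F}\otimes\mathcal{Y})\subseteq\Pi_{(\sigma_i)_i}(\mu\mid\mathcal{F})\otimes\mathcal{Y}$.

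For the reverse inclusion $\Pi_{(\sigma_i)_i}(\mu\mid\mathcal{F})\otimes\mathcal{Y}\subseteq\Pi_{(\sigma_i)_i}(\mu\otimes\nu\mid\mathcal{F}\otimes\mathcal{Y})$, write $\mathcal{Z}=\Pi_{(\sigma_i)_i}(\mu\mid\mathcal{F})$, and use that $\mathcal{Z}\otimes\mathcal{Y}$ is generated by sets of the form $A\times Y$ with $A\in\mathcal{Z}$ together with $X\times B$ with $B\in\mathcal{Y}$. By Proposition \ref{P:basicpropertiessubalgsalkjalk}(i) (subadditivity) it suffices to check that each of $h_{(\sigma_i)_i,\mu\otimes\nu}(\mathcal{Z}\otimes\mathcal{N}_Y\mid\mathcal{F}\otimes\mathcal{Y}:X\times Y)$ and $h_{(\sigma_i)_i,\mu\otimes\nu}(\mathcal{N}_X\otimes\mathcal{Y}\mid\mathcal{F}\otimes\mathcal{Y}:X\times Y)$ vanishes, where $\mathcal{N}_X,\mathcal{N}_Y$ denote null-set algebras. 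The second is zero because $\mathcal{Y}$ is contained in the conditioning algebra $\mathcal{F}\otimes\mathcal{Y}$ (relative entropy of something measurable with respect to the conditioning algebra is zero). The first equals $h_{(\sigma_i)_i,\mu}(\mathcal{Z}\mid\mathcal{F}:X)$ by Proposition \ref{P:wcindeplift} applied with $(Y,\nu)$ strongly sofic (here using strong soficity of $G\cc(Y,\nu)$ in the second variable), which is $\le 0$ by the defining property of $\mathcal{Z}=\Pi_{(\sigma_i)_i}(\mu\mid\mathcal{F})$. Hence $h_{(\sigma_i)_i,\mu\otimes\nu}(\mathcal{Z}\otimes\mathcal{Y}\mid\mathcal{F}\otimes\mathcal{Y}:X\times Y)\le 0$, so $\mathcal{Z}\otimes\mathcal{Y}\subseteq\Pi_{(\sigma_i)_i}(\mu\otimes\nu\mid\mathcal{F}\otimes\mathcal{Y})$. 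Combining the two inclusions gives the theorem.

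The step I expect to be the main obstacle is the ``descent'' in the second paragraph: one must be careful that the relative Pinsker algebra computed inside the larger product space $X\times\widetilde Y$, when intersected back with $\mathcal{X}\otimes\mathcal{Y}$, really gives the relative Pinsker algebra of $X\times Y$, and not something smaller. This uses monotonicity under extensions (Proposition \ref{P:Pinskermonotone}(i)) in one direction and the explicit description of $\Pi$ as $\{A:h(\chi_A\mid\cdot:\cdot)\le 0\}$ from Corollary \ref{C:defnpinskfact} in the other, together with the routine but slightly delicate measure-theoretic fact that $(\mathcal{Z}\otimes\widetilde{\mathcal{Y}})\cap(\mathcal{X}\otimes\mathcal{Y})=\mathcal{Z}\otimes\mathcal{Y}$ when $\mathcal{Y}$ is a complete sub-sigma-algebra of $\widetilde{\mathcal{Y}}$ coming from a factor map. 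Everything else is a direct bookkeeping exercise with the properties of relative entropy already established.
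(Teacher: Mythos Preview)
Your proposal is correct and follows essentially the same route as the paper: pass to the ergodic-automorphism extension $(\widetilde Y,\widetilde\nu)$ via Lemma~\ref{L:ultraproductrick}, apply Lemma~\ref{L:cpeextension} there, and descend using Proposition~\ref{P:Pinskermonotone} together with the intersection identity $(\Pi_{(\sigma_i)_i}(\mu|\mathcal{F})\otimes\widetilde{\mathcal{Y}})\cap(\mathcal{X}\otimes\mathcal{Y})=\Pi_{(\sigma_i)_i}(\mu|\mathcal{F})\otimes\mathcal{Y}$. The only difference is that the paper dismisses the reverse inclusion as ``trivial'' (it follows in one line from Proposition~\ref{P:wcindeplift} applied directly to $\mathcal{Z}\otimes\mathcal{Y}$, without your subadditivity decomposition), whereas you spell it out; your version is slightly more roundabout there but perfectly sound.
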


\begin{proof}

 By Lemma \ref{L:ultraproductrick}, we may find an extension $G\cc (Z,\mathcal{Z},\zeta)\to G\cc (Y,\mathcal{Y},\nu),$ where $(Z,\mathcal{Z},\zeta)$ is a Lebesgue probability space, so that $\Aut(G\cc (Z,\mathcal{Z},\zeta))$ acts on $ (Z,\mathcal{Z},\zeta)$  ergodically, and so that $G\cc (Z,\mathcal{Z},\zeta)$ is strongly sofic with respect to $(\sigma_{i})_{i}.$ If we  regard $\mathcal{Y}$ as a subalgebra of $\mathcal{Z},$ then by Proposition  \ref{P:Pinskermonotone}
 \[\Pi_{(\sigma_{i})_{i}}(\mu\otimes \nu|\mathcal{F}\otimes \mathcal{Y})\subseteq \Pi_{(\sigma_{i})_{i}}(\mu\otimes \zeta|\mathcal{F}\otimes \mathcal{Y})\subseteq \Pi_{(\sigma_{i})_{i}}(\mu\otimes \zeta|\mathcal{F}\otimes \mathcal{Z}).\]
By Lemma \ref{L:cpeextension},
 \[\Pi_{(\sigma_{i})_{i}}(\mu\otimes \nu|\mathcal{F}\otimes \mathcal{Y})\subseteq \Pi_{(\sigma_{i})_{i}}(\mu|\mathcal{F})\otimes \mathcal{Z}.\]
Since $\Pi_{(\sigma_{i})_{i}}(\mu\otimes \nu|\mathcal{F}\otimes \mathcal{Y})\subseteq \mathcal{X}\otimes \mathcal{Y},$ we have that
 \[\Pi_{(\sigma_{i})_{i}}(\mu\otimes \nu|\mathcal{F}\otimes \mathcal{Y})\subseteq (\Pi_{(\sigma_{i})_{i}}(\mu|\mathcal{F})\otimes \mathcal{Z})\cap (\mathcal{X}\otimes \mathcal{Y})=\Pi_{(\sigma_{i})_{i}}(\mu|\mathcal{F})\otimes \mathcal{Y}.\]
We remark that since we take completions everywhere we only have that $(\Pi_{(\sigma_{i})_{i}}(\mu|\mathcal{F})\otimes \mathcal{Z})\cap (\mathcal{X}\otimes \mathcal{Y})=\Pi_{(\sigma_{i})_{i}}(\mu|\mathcal{F})\otimes \mathcal{Y}$ because we are using product measures.
Because the inclusion $\Pi_{(\sigma_{i})_{i}}(\mu|\mathcal{F})\otimes \mathcal{Y}\subseteq \Pi_{(\sigma_{i})_{i}}(\mu\otimes \nu|\mathcal{F}\otimes \mathcal{Y})$ is trivial, we are done.

\end{proof}

\subsection{Proof of the Theorem \ref{T:PinskerProdIntro}:}

We now prove Theorem \ref{T:PinskerProdIntro}, but first we restate it in terms of sub-sigma-algebras.

\begin{cor}\label{C:PinskerProd}
Let $(X,\mathcal{X},\mu),(Y,\mathcal{Y},\nu)$ be Lebesgue spaces.
Suppose that $G\cc (X,\mathcal{X},\mu),G\cc (Y,\mathcal{Y},\nu)$ are actions which are strongly sofic with respect to $(\sigma_{i})_{i}.$ Then for any complete, $G$-invariant, sub-sigma-algebras $\mathcal{F}\subseteq \mathcal{X}$, $\mathcal{G}\subseteq \mathcal{Y},$ we have
\[\Pi_{(\sigma_{i})_{i}}(\mu\otimes \nu|\mathcal{F}\otimes \mathcal{G})=\Pi_{(\sigma_{i})_{i}}(\mu|\mathcal{F})\otimes \Pi_{(\sigma_{i})_{i}}(\nu|\mathcal{G}).\]
In particular, taking $\mathcal{F},\mathcal{G}$ to be the sigma-algebras of sets which are null or conull in $X,Y$ we have
\[\Pi_{(\sigma_{i})_{i}}(\mu\otimes \nu)=\Pi_{(\sigma_{i})_{i}}(\mu)\otimes \Pi_{(\sigma_{i})_{i}}(\nu).\]

\end{cor}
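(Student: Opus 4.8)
The plan is to derive the general product formula for relative outer Pinsker sigma-algebras from Theorem \ref{T:halfpinskerprod} by applying the latter twice — once in each factor — and then combining the two inclusions. The key point is that Theorem \ref{T:halfpinskerprod} already handles the asymmetric situation where one is computing $\Pi_{(\sigma_{i})_{i}}(\mu\otimes\nu|\mathcal{F}\otimes\mathcal{Y})$, i.e. where the full sigma-algebra $\mathcal{Y}$ of the second coordinate is in the "base"; what remains is to chase this down to the genuinely relative statement in both coordinates.

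First I would prove the inclusion $\Pi_{(\sigma_{i})_{i}}(\mu\otimes\nu|\mathcal{F}\otimes\mathcal{G})\subseteq \Pi_{(\sigma_{i})_{i}}(\mu|\mathcal{F})\otimes\Pi_{(\sigma_{i})_{i}}(\nu|\mathcal{G})$. Since $G\cc(Y,\mathcal{Y},\nu)$ is strongly sofic, Theorem \ref{T:halfpinskerprod} (with the roles of $X$ and $Y$ as written, taking the base to be $\mathcal{F}$) gives $\Pi_{(\sigma_{i})_{i}}(\mu\otimes\nu|\mathcal{F}\otimes\mathcal{Y})=\Pi_{(\sigma_{i})_{i}}(\mu|\mathcal{F})\otimes\mathcal{Y}$; symmetrically, since $G\cc(X,\mathcal{X},\mu)$ is strongly sofic, $\Pi_{(\sigma_{i})_{i}}(\mu\otimes\nu|\mathcal{X}\otimes\mathcal{G})=\mathcal{X}\otimes\Pi_{(\sigma_{i})_{i}}(\nu|\mathcal{G})$. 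Because $\mathcal{F}\subseteq\mathcal{F}\otimes\mathcal{Y}$ (under the identification of the excerpt) — more precisely $\mathcal{F}\otimes\mathcal{G}\subseteq\mathcal{F}\otimes\mathcal{Y}$ and $\mathcal{F}\otimes\mathcal{G}\subseteq\mathcal{X}\otimes\mathcal{G}$ — Proposition \ref{P:Pinskermonotone} (\ref{I:Pinksermonotoneinclusion}) gives
\[\Pi_{(\sigma_{i})_{i}}(\mu\otimes\nu|\mathcal{F}\otimes\mathcal{G})\subseteq \Pi_{(\sigma_{i})_{i}}(\mu\otimes\nu|\mathcal{F}\otimes\mathcal{Y})\cap \Pi_{(\sigma_{i})_{i}}(\mu\otimes\nu|\mathcal{X}\otimes\mathcal{G})=\bigl(\Pi_{(\sigma_{i})_{i}}(\mu|\mathcal{F})\otimes\mathcal{Y}\bigr)\cap\bigl(\mathcal{X}\otimes\Pi_{(\sigma_{i})_{i}}(\nu|\mathcal{G})\bigr).\]
The intersection on the right is exactly $\Pi_{(\sigma_{i})_{i}}(\mu|\mathcal{F})\otimes\Pi_{(\sigma_{i})_{i}}(\nu|\mathcal{G})$, by the elementary fact that for product standard probability spaces $(\mathcal{A}\otimes\mathcal{Y})\cap(\mathcal{X}\otimes\mathcal{B})=\mathcal{A}\otimes\mathcal{B}$ for sub-sigma-algebras $\mathcal{A}\subseteq\mathcal{X}$, $\mathcal{B}\subseteq\mathcal{Y}$ (this is where one needs $(X,\mu)$ and $(Y,\nu)$ to be standard so that relative independence arguments apply).

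For the reverse inclusion $\Pi_{(\sigma_{i})_{i}}(\mu|\mathcal{F})\otimes\Pi_{(\sigma_{i})_{i}}(\nu|\mathcal{G})\subseteq\Pi_{(\sigma_{i})_{i}}(\mu\otimes\nu|\mathcal{F}\otimes\mathcal{G})$, I would again use Theorem \ref{T:halfpinskerprod}: applied with base $\mathcal{F}$ it already tells us $\Pi_{(\sigma_{i})_{i}}(\mu|\mathcal{F})\otimes\mathcal{Y}=\Pi_{(\sigma_{i})_{i}}(\mu\otimes\nu|\mathcal{F}\otimes\mathcal{Y})$, and by Proposition \ref{P:Pinskermonotone}(\ref{I:Pinksermonotoneinclusion}) this sits inside — no wait, $\mathcal{F}\otimes\mathcal{Y}\supseteq\mathcal{F}\otimes\mathcal{G}$, so monotonicity goes the wrong way; instead I would argue directly on generating sets. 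Take $A\in\Pi_{(\sigma_{i})_{i}}(\mu|\mathcal{F})$ and $B\in\Pi_{(\sigma_{i})_{i}}(\nu|\mathcal{G})$; it suffices to show $A\times B\in\Pi_{(\sigma_{i})_{i}}(\mu\otimes\nu|\mathcal{F}\otimes\mathcal{G})$, i.e. $h_{(\sigma_{i})_{i},\mu\otimes\nu}(\chi_{A\times B}|\mathcal{F}\otimes\mathcal{G}:X\times Y)\le 0$. Using Proposition \ref{P:basicpropertiessubalgsalkjalk}(\ref{I:subaddalsdhgklahbasic}) with the two $G$-invariant sub-sigma-algebras $S_{A\times Y}$ (generated by $A\times Y$) and $S_{X\times B}$, whose join contains $A\times B$, this is bounded by $h_{(\sigma_{i})_{i},\mu\otimes\nu}(S_{A\times Y}|\mathcal{F}\otimes\mathcal{G}:X\times Y)+h_{(\sigma_{i})_{i},\mu\otimes\nu}(S_{X\times B}|\mathcal{F}\otimes\mathcal{G}:X\times Y)$. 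For the first term, Proposition \ref{P:basicpropertiessubalgsalkjalk}(\ref{P''monotonedecreaseaslJHlkja}) lets us replace the base $\mathcal{F}\otimes\mathcal{G}$ by the smaller $\mathcal{F}\otimes\mathcal{G}$ — here instead I enlarge to $\mathcal{F}\otimes\mathcal{Y}$ via (\ref{P''monotonedecreaseaslJHlkja}): $h_{(\sigma_{i})_{i},\mu\otimes\nu}(S_{A\times Y}|\mathcal{F}\otimes\mathcal{G}:X\times Y)\ge$ is the wrong direction again, so rather I use that $A\times Y\in\Pi_{(\sigma_{i})_{i}}(\mu\otimes\nu|\mathcal{F}\otimes\mathcal{Y})$ (which holds by Theorem \ref{T:halfpinskerprod} since $A\in\Pi_{(\sigma_{i})_{i}}(\mu|\mathcal{F})$), and then by Proposition \ref{P:Pinskermonotone}(\ref{I:Pinksermonotoneinclusion}) applied with the inclusion $\mathcal{F}\otimes\mathcal{G}\subseteq\mathcal{F}\otimes\mathcal{Y}$ we only get $\Pi(\ldots|\mathcal{F}\otimes\mathcal{G})\subseteq\Pi(\ldots|\mathcal{F}\otimes\mathcal{Y})$ — still wrong. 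The clean route, which I would actually write up, is: by the symmetric version of Theorem \ref{T:halfpinskerprod} with base $\mathcal{F}\otimes\mathcal{G}$ is not available, but one can instead reduce to the already-proven inclusion by noting $\Pi_{(\sigma_{i})_{i}}(\mu|\mathcal{F})\otimes\Pi_{(\sigma_{i})_{i}}(\nu|\mathcal{G})\subseteq\Pi_{(\sigma_{i})_{i}}(\mu\otimes\nu|\Pi_{(\sigma_{i})_{i}}(\mu|\mathcal{F})\otimes\mathcal{Y})$ (trivially, since the whole second factor is in the base and the outer Pinsker algebra always contains its base) and this equals $\Pi_{(\sigma_{i})_{i}}(\mu|\Pi_{(\sigma_{i})_{i}}(\mu|\mathcal{F}))\otimes\mathcal{Y}$ by Theorem \ref{T:halfpinskerprod}; then since $\Pi_{(\sigma_{i})_{i}}(\mu|\mathcal{F})$ has zero relative entropy in the presence of $X$ over $\mathcal{F}$, and relative Pinsker over a Pinsker algebra collapses, one concludes $A\times B$ lies in $\Pi_{(\sigma_{i})_{i}}(\mu|\mathcal{F})\otimes\mathcal{Y}$; combined with the symmetric statement $A\times B\in\mathcal{X}\otimes\Pi_{(\sigma_{i})_{i}}(\nu|\mathcal{G})$ and the first half's argument, one gets $A\times B\in\Pi_{(\sigma_{i})_{i}}(\mu\otimes\nu|\mathcal{F}\otimes\mathcal{G})$ after checking that a set lying in both $\Pi(\mu|\mathcal{F})\otimes\mathcal{Y}$ and $\mathcal{X}\otimes\Pi(\nu|\mathcal{G})$ has zero relative entropy over $\mathcal{F}\otimes\mathcal{G}$ — which follows because conditioning on $\mathcal{F}\otimes\mathcal{G}$ inside the product $\Pi(\mu|\mathcal{F})\otimes\Pi(\nu|\mathcal{G})$ factors as a product of two zero-relative-entropy situations via Proposition \ref{P:basicpropertiessubalgsalkjalk}(\ref{I:subaddalsdhgklahbasic}) and Proposition \ref{P:wcindeplift}.

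The main obstacle I anticipate is precisely bookkeeping the monotonicity directions in the reverse inclusion: one must route everything through Theorem \ref{T:halfpinskerprod} applied twice (once per coordinate) and a measure-theoretic lemma identifying $(\mathcal{A}\otimes\mathcal{Y})\cap(\mathcal{X}\otimes\mathcal{B})$ with $\mathcal{A}\otimes\mathcal{B}$ for standard spaces, rather than trying to prove the two-sided relative formula head-on. Once the forward inclusion is in place via the intersection argument above, the reverse inclusion reduces to showing that $\Pi_{(\sigma_{i})_{i}}(\mu|\mathcal{F})\otimes\Pi_{(\sigma_{i})_{i}}(\nu|\mathcal{G})$ has zero relative sofic entropy in the presence of $X\times Y$ over the base $\mathcal{F}\otimes\mathcal{G}$, and this is exactly where subadditivity (Proposition \ref{P:basicpropertiessubalgsalkjalk}(\ref{I:subaddalsdhgklahbasic})) together with the strong-soficity "lifting" identity of Proposition \ref{P:wcindeplift} does the work, as in the proof of Lemma \ref{L:cpeextension}. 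The final "in particular" clause is then immediate by specializing $\mathcal{F}$ and $\mathcal{G}$ to the trivial (null/conull) sigma-algebras and noting $\mathcal{N}_X\otimes\mathcal{N}_Y$ is the null/conull sigma-algebra of $X\times Y$.
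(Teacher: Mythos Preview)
Your forward inclusion matches the paper's argument exactly: apply Theorem \ref{T:halfpinskerprod} in each coordinate, use Proposition \ref{P:Pinskermonotone}(\ref{I:Pinksermonotoneinclusion}) to pass from the base $\mathcal{F}\otimes\mathcal{G}$ to the larger bases $\mathcal{F}\otimes\mathcal{Y}$ and $\mathcal{X}\otimes\mathcal{G}$, and intersect.

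The reverse inclusion, however, is much easier than your write-up suggests, and the paper simply records it as ``trivial.'' Your several attempts get tangled in the wrong monotonicity directions, and your final proposed route via Proposition \ref{P:wcindeplift} is unnecessary. The clean argument is this: view $(X,\mu)$ as a factor of $(X\times Y,\mu\otimes\nu)$ via the first projection; then Proposition \ref{P:Pinskermonotone}(\ref{I:Pinksermonotoneextension}) gives $\Pi_{(\sigma_i)_i}(\mu|\mathcal{F})\subseteq\Pi_{(\sigma_i)_i}(\mu\otimes\nu|\mathcal{F})$, and since $\mathcal{F}\subseteq\mathcal{F}\otimes\mathcal{G}$, Proposition \ref{P:Pinskermonotone}(\ref{I:Pinksermonotoneinclusion}) gives $\Pi_{(\sigma_i)_i}(\mu\otimes\nu|\mathcal{F})\subseteq\Pi_{(\sigma_i)_i}(\mu\otimes\nu|\mathcal{F}\otimes\mathcal{G})$. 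Symmetrically $\Pi_{(\sigma_i)_i}(\nu|\mathcal{G})\subseteq\Pi_{(\sigma_i)_i}(\mu\otimes\nu|\mathcal{F}\otimes\mathcal{G})$, and since the right-hand side is a sigma-algebra it contains the join $\Pi_{(\sigma_i)_i}(\mu|\mathcal{F})\otimes\Pi_{(\sigma_i)_i}(\nu|\mathcal{G})$. No strong soficity, no Proposition \ref{P:wcindeplift}, no subadditivity is needed here: just the two monotonicity statements already packaged in Proposition \ref{P:Pinskermonotone}. Equivalently, one checks directly that $h_{(\sigma_i)_i,\mu\otimes\nu}(\Pi(\mu|\mathcal{F})|\mathcal{F}\otimes\mathcal{G}:X\times Y)\le h_{(\sigma_i)_i,\mu}(\Pi(\mu|\mathcal{F})|\mathcal{F}:X)\le 0$ because passing to a larger presence and conditioning on a larger base both only decrease relative entropy.
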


\begin{proof}
By Proposition  \ref{P:Pinskermonotone},
\[\Pi_{(\sigma_{i})_{i}}(\mu\otimes \nu|\mathcal{F}\otimes \mathcal{G})\subseteq \Pi_{(\sigma_{i})}(\mu\otimes \nu|\mathcal{F}\otimes \mathcal{Y})\cap \Pi_{(\sigma_{i})_{i}}(\mu\otimes \nu|\mathcal{X}\otimes \mathcal{G}).\]
By Theorem \ref{T:halfpinskerprod},
\[\Pi_{(\sigma_{i})}(\mu\otimes \nu|\mathcal{F}\otimes \mathcal{Y})=\Pi_{(\sigma_{i})_{i}}(\mu|\mathcal{F})\otimes \mathcal{Y}.\]
The action $G\actons (X,\mathcal{X},\mu)$ was arbitrary throughout Section \ref{S:PinskerProdhalf}, so Theorem \ref{T:halfpinskerprod} applies with the roles of $(X,\mathcal{X},\mu),(Y,\mathcal{Y},\nu)$ reversed. We thus have that:
\[\Pi_{(\sigma_{i})_{i}}(\mu\otimes \nu|\mathcal{X}\otimes \mathcal{G})=\mathcal{X}\otimes \Pi_{(\sigma_{i})_{i}}(\nu|\mathcal{G}).\]
Thus,
\[\Pi_{(\sigma_{i})_{i}}(\mu\otimes \nu|\mathcal{F}\otimes \mathcal{G})\subseteq (\Pi_{(\sigma_{i})}(\mu|\mathcal{F})\otimes \mathcal{Y})\cap (\mathcal{X}\otimes \Pi_{(\sigma_{i})_{i}}(\nu|\mathcal{G}))=\Pi_{(\sigma_{i})_{i}}(\mu|\mathcal{F})\otimes \Pi_{(\sigma_{i})_{i}}(\nu|\mathcal{G}).\]
As in the proof of Theorem \ref{T:halfpinskerprod}, we remark that the conclusion $\Pi_{(\sigma_{i})}(\mu|\mathcal{F})\otimes \mathcal{Y})\cap (\mathcal{X}\otimes \Pi_{(\sigma_{i})_{i}}(\nu|\mathcal{G}))=\Pi_{(\sigma_{i})_{i}}(\mu|\mathcal{F})\otimes \Pi_{(\sigma_{i})_{i}}(\nu|\mathcal{G})$ is only valid because we take the product measure on $\mu\otimes \nu.$
Since the inclusion
\[\Pi_{(\sigma_{i})_{i}}(\mu|\mathcal{F})\otimes \Pi_{(\sigma_{i})_{i}}(\nu|\mathcal{G})\subseteq \Pi_{(\sigma_{i})_{i}}(\mu\otimes \nu|\mathcal{F}\otimes \mathcal{G})\]
is trivial, we are done.

\end{proof}

\section{Applications to Complete Positive Entropy of Algebraic Actions}\label{S:algebraic}

\subsection{Generalities for an arbitrary algebraic action}\label{S:general urgh}

In this section we apply our product formula for outer Pinsker factors to the study of algebraic actions. Throughout this subsection, $X$ is a compact, metrizable group and $G\actson X$ by continuous automorphisms. Following arguments of \cite{Berg},\cite{ChungLi} we have the following description of the outer Pinsker factor for strongly sofic algebraic actions.
\begin{cor}\label{C:algebraic}
Suppose that $G\cc (X,m_{X})$ is strongly sofic with respect to $(\sigma_{i})_{i}$. Then there exists a closed, normal, $G$-invariant subgroup $Y$ of $X$ so that
$\Pi_{(\sigma_{i})_{i}}(m_{X})$ is the completion of $\{q_{Y}^{-1}(A):A\subseteq X/Y \mbox{ is $m_{X/Y}$-measurable}\},$
where $q_{Y}\colon X\to X/Y$ is the quotient map.

\end{cor}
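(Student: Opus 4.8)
The plan is to mimic the classical argument (Berg, Chung--Li) that the Pinsker factor of an algebraic action is itself algebraic, using the product formula of Corollary \ref{C:PinskerProd} as the key replacement for the amenable-case input. The crucial structural fact is that the multiplication map $m\colon X\times X\to X$ and the inversion map $i\colon X\to X$ are $G$-equivariant (since $G$ acts by automorphisms), and $m$ pushes $m_X\otimes m_X$ forward to $m_X$. Thus $m$ realizes $G\cc(X,m_X)$ as a factor of $G\cc(X\times X,m_X\otimes m_X)$, and the outer Pinsker $\sigma$-algebra behaves functorially under such operations by Proposition \ref{P:Pinskermonotone}.

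Here is the order in which I would carry this out. First I would set $\mathcal{P}=\Pi_{(\sigma_i)_i}(m_X)\subseteq\mathcal{B}_X$, the outer Pinsker $\sigma$-algebra, which is a complete, $G$-invariant sub-$\sigma$-algebra. The goal is to show $\mathcal{P}$ is ``translation-closed'' in the appropriate measure-theoretic sense, i.e. that it comes from a closed normal $G$-invariant subgroup $Y$ via $\mathcal{P}=q_Y^{-1}(\mathcal{B}_{X/Y})$ (mod null sets). Second, I would apply the product formula: $\Pi_{(\sigma_i)_i}(m_X\otimes m_X)=\mathcal{P}\otimes\mathcal{P}$. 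Third, I would use that the multiplication map $m\colon(X\times X,m_X\otimes m_X)\to(X,m_X)$ is a $G$-equivariant factor map, so by Proposition \ref{P:Pinskermonotone}(i) applied with $\mathcal{F}$ trivial, $m^{-1}(\mathcal{P})\subseteq\Pi_{(\sigma_i)_i}(m_X\otimes m_X)=\mathcal{P}\otimes\mathcal{P}$. In other words, for every $A\in\mathcal{P}$, the set $m^{-1}(A)=\{(x,y):xy\in A\}$ lies in $\mathcal{P}\otimes\mathcal{P}$ (mod null). Fourth, I would translate this into a statement about the closed $G$-invariant subgroup: let $Y$ be defined via the standard construction — for instance $Y=\{y\in X: yA=A \bmod m_X \text{ for all } A\in\mathcal{P}\}$, equivalently the subgroup preserving the closed subspace $L^2(X,\mathcal{P},m_X)\subseteq L^2(X,m_X)$ under the regular representation — and show $Y$ is closed (it is an intersection of stabilizers of $L^2$-vectors, hence closed in the Polish group $X$), normal (since $\mathcal{P}$ is $G$-invariant and $G$ acts by automorphisms, and conjugation by elements of $X$... this is where one must be careful), and $G$-invariant. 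Fifth, I would verify $\mathcal{P}=q_Y^{-1}(\mathcal{B}_{X/Y})$: the inclusion $\supseteq$ follows once one knows every $q_Y$-pullback set is $\mathcal{P}$-measurable; the inclusion $\subseteq$ is the statement that $\mathcal{P}$-measurable functions are $Y$-invariant, which is exactly what the multiplication-factor computation in step three delivers via a Fourier/Peter--Weyl style disintegration argument (decompose $L^2(X,\mathcal{P})$ into $X$-isotypic components and show the only characters appearing are those trivial on $Y$).

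I expect the main obstacle to be step four/five: extracting from ``$m^{-1}(A)\in\mathcal{P}\otimes\mathcal{P}$ for all $A\in\mathcal{P}$'' the genuinely group-theoretic conclusion that $\mathcal{P}$ is invariant under translation by a \emph{closed} subgroup, and that this subgroup is normal and $G$-invariant. The cleanest route is operator-algebraic: let $E=E_{\mathcal{P}}$ be the conditional expectation, viewed as a projection on $L^2(X,m_X)$; the factoring condition says $E$ is compatible with the coproduct, i.e. (writing $\Delta$ for the comultiplication on $L^\infty(X)$ induced by $m$) $\Delta E = (E\otimes E)\Delta$ on the relevant subspace, which forces the range of $E$ to be a sub-coalgebra, hence of the form $L^\infty(X/Y)$ for a closed subgroup $Y$ by the structure theory of compact groups (this is the content of Berg's lemma / the Chung--Li argument). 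Normality and $G$-invariance of $Y$ then follow because $\mathcal{P}$ is $G$-invariant and, for normality, because $\mathcal{P}$ being a \emph{factor} $\sigma$-algebra of the group (not merely a subspace) means its stabilizer subgroup is automatically normal — one checks that $yA=A$ for all $A\in\mathcal{P}$ and all $y\in Y$ implies the same for conjugates, using that $A\mapsto xAx^{-1}$ permutes $\mathcal{P}$ (as $\mathcal{P}$ is closed under the measure-preserving maps induced by, and $G$-equivariance is preserved under, inner automorphisms combined with the $G$-action). I would isolate this structural extraction as a lemma, cite Berg and Chung--Li Theorem 8.1 for the precise form, and keep the dynamical input entirely confined to steps two and three where Corollary \ref{C:PinskerProd} is invoked.
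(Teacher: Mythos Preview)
Your proposal is correct and uses the same key input as the paper: the multiplication map $p\colon X\times X\to X$ is a $G$-equivariant factor map, so Proposition~\ref{P:Pinskermonotone} and Corollary~\ref{C:PinskerProd} give $p^{-1}(A)\in\Pi_{(\sigma_i)_i}(m_X)\otimes\Pi_{(\sigma_i)_i}(m_X)$ for every $A\in\Pi_{(\sigma_i)_i}(m_X)$. The difference lies only in the extraction step. You propose to build $Y$ as a stabilizer and argue via a coalgebra/Peter--Weyl analysis that $L^\infty(X,\mathcal{P})=L^\infty(X/Y)$; this works but is more labor than needed. The paper instead reduces everything to a black box of Lind (translation-invariant sigma-algebras on compact groups, see also Schmidt, Lemma~20.4): it suffices to show $x^{-1}A\in\mathcal{P}$ for every $x\in X$ and $A^{-1}\in\mathcal{P}$ for every $A\in\mathcal{P}$. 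For the first, Fubini applied to $\chi_{p^{-1}(A)}$ gives $x^{-1}A\in\mathcal{P}$ for \emph{almost every} $x$; then one observes that $\{x:x^{-1}A\in\mathcal{P}\}$ is closed in $X$ (translation is continuous into the measure algebra), hence a dense closed set is all of $X$. Inversion invariance is handled the same way via the inversion map. This Fubini-plus-closedness trick is the cleanest way to pass from the mod-null statement $p^{-1}(A)\in\mathcal{P}\otimes\mathcal{P}$ to genuine translation invariance by every element, and it sidesteps the operator-algebraic machinery you outline in your steps four and five.
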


\begin{proof}
By \cite{LindTranslation} (see also \cite{Schmidt} Lemma 20.4), it is enough to show that $x^{-1}\Pi_{(\sigma_{i})_{i}}(m_{X})\subseteq \Pi_{(\sigma_{i})_{i}}(m_{X})$ for all $x\in X$ and that $A^{-1}\in \Pi_{(\sigma_{i})_{i}}(m_{X})$ whenever $A\in \Pi_{(\sigma_{i})_{i}}(m_{X}).$ Let us first show that  $x^{-1}\Pi_{(\sigma_{i})_{i}}(m_{X})\subseteq \Pi_{(\sigma_{i})_{i}}(m_{X})$ for all $x\in X.$ Fix $A\in \Pi_{(\sigma_{i})_{i}}(m_{X}),$ and let
\[E=\{x\in X:x^{-1}A\in \Pi_{(\sigma_{i})_{i}}(m_{X})\}.\]
Observe that, since $\Pi_{(\sigma_{i})_{i}}(m_{X})$ is complete, we know that $E$ is closed in $X,$ so it is enough to show that $E$ is dense in $X.$
Let $p\colon X\times X\to X$ be the multiplication map $p(x,y)=xy.$ Then by Proposition \ref{P:Pinskermonotone} and Corollary \ref{C:PinskerProd},
\[p^{-1}(A)\in \Pi_{(\sigma_{i})_{i}}(m_{X}\otimes m_{X})=\Pi_{(\sigma_{i})_{i}}(m_{X})\otimes \Pi_{(\sigma_{i})_{i}}(m_{X}).\]
 By Fubini's Theorem, we have that
$\chi_{x^{-1}A}=\chi_{p^{-1}(A)}(x,\cdot)$ is $\Pi_{(\sigma_{i})_{i}}(m_{X})$-measurable for almost every $x\in X$. Thus $x^{-1}A\in \Pi_{(\sigma_{i})_{i}}(m_{X})$ for almost every $x\in X,$ so $E$
has full measure in $X$ and is thus dense. A similar argument shows that $A^{-1}\in \Pi_{(\sigma_{i})_{i}}(m_{X})$ (by considering the map $x\to x^{-1}$).

\end{proof}

For the next corollary we recall the definition of topological entropy in the presence due to Li-Liang in \cite{LiLiang2}.
We recall some terminology. Let $A$ be a set equipped with a pseudometric $\rho.$ For $n\in \N,$ we define $\rho_{2}$ on $A^{n}$ by
\[\rho_{2}(x,y)=\left(\frac{1}{n}\sum_{j=1}^{n}\rho(x(j),y(j))^{2}\right)^{1/2}.\]
 Given $\varepsilon>0,$ we say that a subset $B$ of $A$ is $\varepsilon$-separated with respect to $\rho$ if for every pair of unequal elements $b,b'$ in $B$ we have $\rho(b,b')>\varepsilon.$ We let $N_{\varepsilon}(A,\rho)$ be the largest cardinality of an $\varepsilon$-separated subset of $A$ with respect to $\rho.$

\begin{defn}[Definition 9.3 in \cite{LiLiang2}] \label{D:topentrpresence}
Suppose that $Z$ is a compact, metrizable space with $G\cc Z$ by homeomorphisms. Let $G\cc Y$ be a topological factor with factor map $\pi\colon Z\to Y$. Let $\rho_{Z},\rho_{Y}$ be dynamically generating pseudometrics on $Z,Y$ respectively. For a finite $F\subseteq G$ and a  $\delta>0,$ we set
\[\Map(Y:\rho_{Z},F,\delta,\sigma_{i})=\{\pi\circ \phi:\phi\in \Map(\rho_{Z},F,\delta,\sigma_{i})\}.\]
Set
\[h_{(\sigma_{i})_{i},\topo}(\rho_{Y}:\rho_{Z},F,\delta,\varepsilon)=\limsup_{i\to\infty}\frac{1}{d_{i}}\log N_{\varepsilon}(\Map(Y:\rho_{Z},F,\delta,\sigma_{i}),\rho_{Y,2}),\]
\[h_{(\sigma_{i})_{i},\topo}(\rho_{Y}:\rho_{Z},\varepsilon)=\inf_{\substack{\textnormal{ finite} F\subseteq G,\\ \delta>0}}h_{(\sigma_{i})_{i},\topo}(\rho_{Y}:\rho_{Z},F,\delta ,\varepsilon),\]
\[h_{(\sigma_{i})_{i},\topo}(Y:Z,G)=\sup_{\varepsilon>0}h_{(\sigma_{i})_{i},\topo}(\rho_{Y}:\rho_{Z},\varepsilon).\]
We call $h_{(\sigma_{i})_{i},\topo}(Z:Y,G)$ the \emph{topological entropy of $Y$ in the presence of $Z.$}
\end{defn}

By  \cite[Lemma 9.5]{LiLiang2},  (see also \cite[Theorem 2.10]{Me9} for a similar result) we know that $h_{(\sigma_{i})_{i}}(Y:Z,G)$ does not depend upon the pseudometrics $\rho_{Z},\rho_{Y}.$ Thus $h_{(\sigma_{i})_{i}}(Y:Z,G)$ is an isomorphism invariant of the factor map $\pi\colon Z\to Y$ (where isomorphism of factor maps is formulated in the obvious way).

If $Y=Z$ with factor map $\id,$ then this is just the topological entropy. We also recall the formulation of measure-theoretic entropy in the presence in terms of a topological model we gave in \cite{Me9}.

\begin{defn}[Definition 2.7 in \cite{Me9}]\label{D:measentpresence}
Let $G$ be a countable, discrete, sofic group with sofic approximation $\sigma_{i}\colon G\to S_{d_{i}}.$
Suppose that $Z$ is a compact, metrizable space with $G\cc Z$ by homeomorphisms and that $\mu\in \Prob_{G}(Z).$ Let $G\cc Y$ be a topological factor with factor map $\pi\colon Z\to Y$  and set $\nu=\pi_{*}\mu.$ Let $\rho_{Z},\rho_{Y}$ be dynamically generating pseudometrics on $Z,Y$ respectively. For finite $F\subseteq G,L\subseteq C(Z),$ and a $\delta>0,$ we set
\[\Map_{\mu}(Y:\rho_{Z},F,L,\delta,\sigma_{i})=\{\pi\circ \phi:\phi\in \Map_{\mu}(\rho_{Z},F,L,\delta,\sigma_{i})\}.\]
We define
\[h_{(\sigma_{i})_{i},\mu}(\rho_{Y}:\rho_{Z},F,L,\delta,\varepsilon)=\limsup_{i\to\infty}\frac{1}{d_{i}}\log N_{\varepsilon}(\Map_{\mu}(Y:\rho_{Z},F,L,\delta,\sigma_{i}),\rho_{Y,2}),\]
\[h_{(\sigma_{i})_{i},\mu}(\rho_{Y}:\rho_{Z},\varepsilon)=\inf_{\substack{\textnormal{ finite} F\subseteq G,\\ \textnormal{finite} L\subseteq C(Z),\\  \delta>0}}h_{(\sigma_{i})_{i},\mu}(\rho_{Y}:\rho_{Z},F,L,\delta,\varepsilon),\]
\[h_{(\sigma_{i})_{i},\mu}(\rho_{Y}:\rho_{Z},G)=\sup_{\varepsilon>0}h_{(\sigma_{i})_{i},\mu}(\rho_{Y}:\rho_{Z},\varepsilon).\]
\end{defn}
By Theorem 2.10 of \cite{Me9}, we know that $h_{(\sigma_{i})_{i},\mu}(\rho_{Z}:\rho_{X},G)$ agrees with the measure-theoretic entropy of $G\cc (Y,\nu)$ in the presence of $G\cc (Z,\mu).$
We showed in \cite{Me12} that if  $G\cc (X,m_{X})$ is strongly sofic, then for any closed, normal, $G$-invariant subgroup $Y\subseteq X$ we have $h_{\topo}(X/Y:X,G)=h_{(\sigma_{i})_{i},m_{X}}(X/Y:X,G).$ This and Corollary \ref{C:algebraic} will give us examples of algebraic actions with completely positive measure-theoretic entropy.

 If $Z$ is a compact, metrizable space with $G\cc Z$ by homeomorphisms, we say that $G\cc Z$ has \emph{completely positive topological entropy in the presence relative to $(\sigma_{i})_{i}$}  if for any topological factor $G\cc Y$ of $G\cc Z$ we have
\[h_{(\sigma_{i})_{i},\topo}(Y:Z,G)>0.\]
Recall that if $(Z,\mu)$ is a Lebesgue probability space and $G\cc (Z,\mu)$ is a probability measure-preserving action, then we say that $G\cc (Z,\mu)$ has \emph{completely positive measure-theoretic entropy in the presence relative to $(\sigma_{i})_{i}$}  if for every measure-theoretic factor $G\cc (Y,\nu)$ we have
\[h_{(\sigma_{i})_{i},\mu}(Y:Z,G)>0.\]


\begin{proof}[Proof of Corollary 1.4]\label{C:SAPPCPEalgtop}

The implications (\ref{I:algcpmeasentpres}) implies (\ref{I:algcptopentpres}) and (\ref{I:algcptopentpres}) implies (\ref{I:algtopentsubgrp}) are clear from the definitions.

Suppose that (\ref{I:algtopentsubgrp}) holds. By Corollary \ref{C:algebraic}, we may find a closed, normal, $G$-invariant subgroup $Y$ of $X$ so that the outer Pinsker factor of $G\cc (X,m_{X})$ is given by $G\cc (X/Y,m_{X/Y})$ (with the factor map being the natural homomorphism $X\to X/Y$). By Theorem 3.6 of \cite{Me12},
\[0=h_{(\sigma_{i})_{i},m_{X}}(X/Y:X,G)=h_{(\sigma_{i})_{i},\topo}(X/Y:X,G).\]
By (\ref{I:algtopentsubgrp}), it follows that $Y=X.$ Thus the outer Pinsker factor of $G\cc (X,m_{X})$ is trivial, and so $G\cc (X,m_{X})$ has completely positive measure-theoretic entropy in the presence.

\end{proof}

For the next corollary we use the IE-tuples developed by Kerr-Li (see \cite{KerrLi2} for the definition). The following may be regarded as an analogue of Corollary 8.4 of \cite{ChungLi}.
\begin{cor}\label{C:ietuplesecalg}
If $G\actson (X,m_{X})$ is strongly sofic with respect to $(\sigma_{i})_{i}$
and $\IE^{2}_{(\sigma_{i})_{i}}(X,G)=X^{2},$ then $G\cc (X,m_{X})$ has completely positive measure-theoretic entropy in the presence (relative to $(\sigma_{i})_{i}$).

\end{cor}

\begin{proof} It is clear from the definitions that if $\IE^{2}_{(\sigma_{i})_{i}}(X,G)=X^{2},$ then $G$ has completely positive topological entropy in the presence (relative to $(\sigma_{i})_{i}$).

\end{proof}

\subsection{Proof of Corollary \ref{C:algintroCPE}}

Combining with our previous results in \cite{Me7} we may produce a large class of algebraic actions with completely positive measure-theoretic entropy.

\begin{proof}[Proof of Corollary \ref{C:algintroCPE}]
Since $X$ was arbitrary in the previous subsection, Corollary \ref{C:ietuplesecalg} applies to $X=X_{f}.$ The Corollary now follows from Corollary \ref{C:ietuplesecalg} and Corollary 4.10 of \cite{Me7}.

\end{proof}

\appendix

\section{Agreement of Relative Entropy with the Amenable  Case}\label{A:Agree}
In this section we show that our definition of (upper) relative entropy for actions of sofic groups agrees with the usual definition when the group is amenable. Throughout the appendix, we suppose that $G$ is an amenable group, that $(X,\mathcal{X},\mu)$ is a Lebesgue probability space, and that $G\actson (X,\mathcal{X},\mu)$ is a probability measure-preserving action. We also fix a sofic approximation $\sigma_{i}\colon G\to S_{d_{i}}$ of $G.$

Suppose that $\mathcal{F}\subseteq \mathcal{X}$ is a $G$-invariant, complete, sub-sigma-algebra of $\mathcal{X}$ and that $\alpha\colon X\to A$ is a finite measurable observable. Recall that the relative dynamical entropy of $\alpha$ given $\mathcal{F}$ is defined by
\[h_{\mu}(\alpha|\mathcal{F},G)=\lim_{n\to\infty}\frac{H\left(\bigvee_{g\in F_{n}}g\alpha|\mathcal{F}\right)}{|F_{n}|},\]
where $F_{n}$ is a F\o lner sequence (this was first defined in \cite{AbramovRohklin3}). It is shown in \cite{AbramovRohklin3} that the above limit exists and is independent of the F\o lner sequence. If $\mathcal{G}$ is another complete, $G$-invariant, sub-sigma algebra of $\mathcal{X},$ then we define
\[h_{\mu}(\mathcal{G}|\mathcal{F},G)=\sup_{\alpha}h_{\mu}(\alpha|\mathcal{F},G),\]
where the supremum is over all finite, $\mathcal{G}$-measurable observables $\alpha.$ By \cite{AbramovRohklin3}, we know that if $\alpha$ is a finite measurable observable and $\mathcal{G}$ is the smallest complete, $G$-invariant sub-sigma-algebra of $\mathcal{X}$ containing $\alpha,$ then
\[h_{\mu}(\mathcal{G}|\mathcal{F},G)=h_{\mu}(\alpha|\mathcal{F},G).\]

We use the following simple combinatorial lemma whose proof is left as an exercise to the reader.
\begin{lem}\label{L:almost permutation lemma}
Let $K$ be a finite set and $\varepsilon>0.$ Then there is a $\kappa>0$  (depending only upon $|K|,\varepsilon$) with the following property. Suppose we are given:
\begin{itemize}
\item a natural number $d,$
\item $K$-tuples $(B_{k})_{k\in K},(C_{k})_{k\in K}$ of subsets of $\{1,\dots,d\}$ with $|u_{d}(B_{k})-u_{d}(C_{k})|<\kappa$ for all $k\in K$, and so that $\max(u_{d}(B_{k}\cap B_{l}),u_{d}(C_{k}\cap C_{l}))<\kappa$ for all $k,l\in K$ with $k\ne l,$ and
\item functions $p_{k}\colon B_{k}\to C_{k},$ for $k=1,\dots,n,$ with $u_{d}(p_{k}(B_{k}))\geq u_{d}(C_{k})-\kappa$ for all $k=1,\dots,n.$
\end{itemize}
Then there is a $p\in S_{d}$ so that
\[u_{d}\left(\bigcup_{k\in K}\{j\in B_{k}:p(j)\ne p_{k}(j)\}\right)\leq \varepsilon.\]

\end{lem}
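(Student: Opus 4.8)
The plan is to prove Lemma~\ref{L:almost permutation lemma} by an explicit greedy construction of $p$: first replace each $B_k$ and each $C_k$ by pairwise disjoint ``cores,'' then replace each $p_k$ by an injection between the cores that still agrees with $p_k$ on most of $B_k$, then glue these injections into a single injection defined on all but a small portion of $\{1,\dots,d\}$, and finally extend it to a permutation. Write $n=|K|$, identify $K$ with $\{1,\dots,n\}$, and set $\kappa:=\varepsilon/(2n^2)$; all estimates below are with respect to $u_d$.

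\textbf{Step 1 (disjointification).} I would put $B_k'=B_k\setminus\bigcup_{l\ne k}B_l$ and $C_k'=C_k\setminus\bigcup_{l\ne k}C_l$. The hypothesis $\max(u_d(B_k\cap B_l),u_d(C_k\cap C_l))<\kappa$ gives $u_d(B_k\setminus B_k')<(n-1)\kappa$ and $u_d(C_k\setminus C_k')<(n-1)\kappa$, while the families $(B_k')_k$ and $(C_k')_k$ are each pairwise disjoint. \textbf{Step 2 (make $p_k$ injective).} Since $p_k(B_k)\subseteq C_k$ with $u_d(p_k(B_k))\ge u_d(C_k)-\kappa$, and $u_d(B_k)\le u_d(C_k)+\kappa$ by hypothesis, choosing one $p_k$-preimage of each point of $p_k(B_k)$ yields $\tilde B_k\subseteq B_k$ with $p_k|_{\tilde B_k}$ injective and $u_d(B_k\setminus\tilde B_k)\le 2\kappa$. \textbf{Step 3 (trim).} Set $\hat B_k=\tilde B_k\cap B_k'\cap p_k^{-1}(C_k')$ and $\hat p_k=p_k|_{\hat B_k}$; then $\hat p_k$ is injective with image in $C_k'$. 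Because $p_k|_{\tilde B_k}$ is injective, $u_d(\tilde B_k\cap p_k^{-1}(C_k\setminus C_k'))\le u_d(C_k\setminus C_k')<(n-1)\kappa$, and combining this with Steps 1--2 gives $u_d(B_k\setminus\hat B_k)\le 2\kappa+(n-1)\kappa+(n-1)\kappa=2n\kappa$.

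\textbf{Step 4 (glue and extend).} The $\hat B_k$ are pairwise disjoint (they lie in the disjoint $B_k'$), their images $\hat p_k(\hat B_k)$ are pairwise disjoint (they lie in the disjoint $C_k'$), and each $\hat p_k$ is injective, so $\hat p:=\bigsqcup_k\hat p_k$ is an injection from $\hat B:=\bigsqcup_k\hat B_k$ onto $\hat C:=\bigsqcup_k\hat p_k(\hat B_k)$, with $|\hat B|=|\hat C|$. Hence $|\{1,\dots,d\}\setminus\hat B|=|\{1,\dots,d\}\setminus\hat C|$, and I would extend $\hat p$ by an arbitrary bijection between these two complements to get $p\in S_d$ with $p|_{\hat B_k}=p_k|_{\hat B_k}$. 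Then $\{j\in B_k:p(j)\ne p_k(j)\}\subseteq B_k\setminus\hat B_k$, so
\[
u_d\Big(\bigcup_{k\in K}\{j\in B_k:p(j)\ne p_k(j)\}\Big)\le\sum_{k\in K}u_d(B_k\setminus\hat B_k)\le 2n^2\kappa=\varepsilon,
\]
which is the desired conclusion, and $\kappa$ depends only on $|K|$ and $\varepsilon$.

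The only mildly delicate point I anticipate is the bookkeeping in Step~3: one must restrict to $\tilde B_k$ \emph{before} estimating the size of the preimage of $C_k\setminus C_k'$, since $p_k$ itself may be far from injective and could pile an arbitrarily large set onto $C_k\setminus C_k'$; once injectivity of $p_k|_{\tilde B_k}$ is in hand, that preimage is controlled by $u_d(C_k\setminus C_k')$. Everything else is elementary counting.
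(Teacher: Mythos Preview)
Your proof is correct and is precisely the natural argument one would expect for this elementary combinatorial lemma; the paper itself does not supply a proof, stating only that it ``is left as an exercise to the reader.'' Your bookkeeping is clean, and your observation in Step~3 that one must pass to the injective restriction $\tilde B_k$ before bounding the preimage of $C_k\setminus C_k'$ is exactly the right point to flag.
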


The following lemma, roughly speaking, says the following: given a microstate $\phi$ of a probability measure-preserving action of an amenable group $G$, one can obtain \emph{any} other microstate for the same action by pre-composing $\phi$ with a permutation that almost commutes with the sofic approximation of $G.$

\begin{lem}\label{L:PopaconjationA}
Let $\beta\colon X\to B$ be a finite measurable observable.
For every $\varepsilon>0$ and every finite $E\subseteq G,$ there exist a finite $F\subseteq G,$ a $\delta>0,$ and an $I\in \N$ so that if $i\geq I$ and  $\phi,\psi\in \AP(\beta,F,\delta,\sigma_{i}),$ then there is a $p\in S_{d_{i}}$ with
\[u_{d_{i}}(\{j:\psi(p(j))\ne\phi(j)\})\leq \varepsilon\textnormal{ and }\max_{g\in E}u_{d_{i}}(\{j:p(\sigma_{i}(g)(j))\ne\sigma_{i}(g)p(j)\})\leq \varepsilon.\]

\end{lem}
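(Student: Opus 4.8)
The goal is to show that any two microstates $\phi,\psi$ for $G\actson(X,\mu)$ (refining a fixed finite observable $\beta$, on a fine enough scale $F,\delta$) can be conjugated into each other by a single permutation $p\in S_{d_i}$ that approximately commutes with $\sigma_i$ on the finite set $E$. The strategy is to implement the Ornstein--Weiss quasi-tiling machinery directly at the level of the sofic approximation, and then apply the combinatorial matching Lemma~\ref{L:almost permutation lemma}.

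First I would fix $\varepsilon>0$ and $E\subseteq G$, and use amenability to choose, via the Ornstein--Weiss quasi-tiling theorem, finitely many $(E,\varepsilon')$-invariant F\o lner sets $T_1,\dots,T_m\subseteq G$ (for a small $\varepsilon'$ to be chosen) such that $G$-translates of the $T_k$ quasi-tile any sufficiently invariant set with overlaps and remainder controlled by $\varepsilon'$. Then I would take $F\supseteq\bigcup_k T_k\cup E\cup E^{-1}$ (enlarged further so that names over $F$ determine the relevant quasi-tiling data) and $\delta$ small. The point of being a microstate in $\AP(\beta,F,\delta,\sigma_i)$ is that the pullback names $\phi^F_{\sigma_i},\psi^F_{\sigma_i}$ are $\delta$-close in distribution to the true $\beta^F$-name; in particular, for most $j\in\{1,\dots,d_i\}$ the ``$\phi$-name'' and ``$\psi$-name'' over $F$ of the point $j$ look like genuine $\beta$-names, and the empirical frequency of each $\beta^{T_k}$-pattern under $\phi$ and under $\psi$ agree up to $O(\delta)$.

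Next, using approximate multiplicativity and approximate freeness of $\sigma_i$, I would run the quasi-tiling argument on $\{1,\dots,d_i\}$ itself: build a collection of ``tiles'' $\sigma_i(T_k)(j)$ (centered at suitable points $j$) that cover all but an $\varepsilon'$-fraction of $\{1,\dots,d_i\}$ with pairwise overlaps of total measure $<\varepsilon'$. Since $\phi$ and $\psi$ exhibit the same $\beta^{T_k}$-patterns with nearly the same frequencies, I can match each $\phi$-tile of a given pattern to a $\psi$-tile of the same pattern, and on a matched pair of tiles define a bijection $T_k\to T_k$-equivariantly (i.e. $j'\mapsto \sigma_i(g)(\text{corresponding point})$ for $g\in T_k$) that carries $\phi$ to $\psi$ on that tile. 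This produces partially-defined injections $p_k$ on most of $\{1,\dots,d_i\}$ which (a) almost intertwine $\sigma_i(g)$ for $g\in E$ — because the tiles are $(E,\varepsilon')$-invariant, the equivariance relation $p_k\circ\sigma_i(g)=\sigma_i(g)\circ p_k$ fails only near tile boundaries — and (b) carry the $\phi$-labelling to the $\psi$-labelling off a small set. Feeding the sets $B_k$ (= domains) and $C_k$ (= images) of these $p_k$ into Lemma~\ref{L:almost permutation lemma} yields a genuine $p\in S_{d_i}$ agreeing with the $p_k$ off an $\varepsilon$-fraction, and the two required estimates follow by absorbing all the error terms ($\varepsilon'$, $\delta$, quasi-tiling remainder, boundary effects, and the Lemma's $\varepsilon$) into the final $\varepsilon$.

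The main obstacle is the bookkeeping in transporting the quasi-tiling argument from $G$ to the sofic approximation $\sigma_i$ while simultaneously keeping track of two labellings $\phi,\psi$: one must verify that the tiles can be chosen to (approximately) respect both the $\phi$-names and the $\psi$-names, that the matching of same-pattern tiles is possible with negligible leftover on each side (this is where the equality of empirical pattern-frequencies, forced by both $\phi$ and $\psi$ being $\delta$-microstates for the same $\mu$, is essential), and that the equivariance defect of each $p_k$ is genuinely concentrated near boundaries — which uses approximate multiplicativity of $\sigma_i$ together with $(E,\varepsilon')$-invariance of the $T_k$. As the text notes, this is fundamentally the Connes--Feldman--Weiss/Ornstein--Weiss argument underlying Proposition~1.20 of \cite{Pan} and Corollary~5.2 of \cite{PoppArg}, transcribed into an ultrafilter-free finitary form; the novelty here is only in the presentation, so I would organize the write-up to make the quasi-tiling input a black box and spend the effort on the matching-and-gluing step.
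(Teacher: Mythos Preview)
Your proposal is correct and uses essentially the same Ornstein--Weiss quasi-tiling strategy as the paper, together with Lemma~\ref{L:almost permutation lemma} for the gluing step. There is one genuine organizational difference worth noting: the paper carries out the quasi-tiling \emph{inside the space $X$}, choosing centres $\widetilde V_{1},\dots,\widetilde V_{L}\subseteq X$ that are $\beta^{F_{0}}$-measurable and then pulling them back to $\{1,\dots,d_{i}\}$ via $\phi$ and $\psi$ separately; because this requires the action $G\actson(X,\mu)$ to be free, the paper adds a second case reducing an arbitrary action to a free one by taking the product with a Bernoulli shift and lifting microstates. Your route instead quasi-tiles $\{1,\dots,d_{i}\}$ directly using the approximate freeness of $\sigma_{i}$ (which is automatic), and so dispenses with the free/non-free case split entirely. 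The paper's route buys the convenience that the tiling is determined by the $\beta^{F_{0}}$-name and hence pulls back \emph{consistently} under both $\phi$ and $\psi$, making the matching of centres a purely combinatorial comparison of empirical $\beta^{F_{0}}$-name frequencies; in your version you must explicitly match tiles of the same $\beta^{T_{k}}$-pattern under $\phi$ to tiles of that pattern under $\psi$, but this is exactly the frequency-matching argument you describe. One small point to watch when you write it out: to apply Lemma~\ref{L:almost permutation lemma} you need the index set $K$ to be fixed (independent of $d_{i}$), so you should index by pairs $(h,r)$ with $h\in T_{r}$ --- i.e.\ by ``position within a tile shape'' --- rather than by individual tiles, exactly as the paper does.
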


Before jumping into the proof let us make a few comments about Lemma \ref{L:PopaconjationA} . If $G\actson (X,\mu)$ is free, then an alternate way to say Lemma \ref{L:PopaconjationA} is in terms of the full pseudogroup of the action $G\actson (X,\mu),$ and sofic approximations of the full pseudogroup. For the precise definition of a sofic approximation of the full pseudogroup see the discussion preceding Definition 2.1 of \cite{DKP}. Any pair $(\phi,\sigma)$ consisting of a sufficiently almost free almost homomorphism $\sigma\colon G\to S_{d},$ and a microstate $\phi$ for $G\actson (X,\mu)$ with respect to $\sigma$ gives rise, in a completely natural way, to an almost trace-preserving, almost homomorphism  $[[G\actson (X,\mu)]]\to [[S_{d}\actson \{1,\dots,d\}]].$ This gives a sofic approximation of the full pseudogroup.  By Connes-Feldman-Weiss \cite{CFW}, the orbit equivalence relation of every free action of amenable group is hyperfinite. The lemma is then asserting that if $\mathcal{R}$ is a hyperfinite equivalence relation, then any two sofic approximations of the full pseudogroup of $\mathcal{R}$ are approximately conjugate. This  is precisely what Proposition 1.20 of \cite{Pan} asserts. Intuitively, that any two sofic approximations of a full pseudogroup of a hyperfinite equivalence relation $\mathcal{R}$ are approximately conjugate should be obvious, since this fact is easy to establish when almost every equivalence class of $\mathcal{R}$ is finite. If the reader is familiar with the appropriate background on orbit equivalence relations, then we invite them to check that there is indeed a straightforward proof along these lines. This is the proof that \cite{Pan} gives. One can give a similar formulation of ``uniqueness up to approximate conjugacy" when $G\actson (X,\mu)$ is not free, using the transformation groupoid instead of the full pseudogroup. Under this formulation, Lemma \ref{L:PopaconjationA} is equivalent to a result of Popa (see Corollary 5.2 of \cite{PoppArg}), who proved it in the framework of ultraproducts of operator algebras (also relying on the Connes-Feldman-Weiss theorem). At the request of the referee, we have included a proof that does not use ultrafilters as in \cite{Pan},\cite{PoppArg}.

It is worth mentioning that our proof is essentially an application of the quasi-tiling machinery developed by Ornstein-Weiss in \cite{OrnWeiss}. The Connes-Feldman-Weiss result also ultimately relies on this quasi-tiling machinery. In this sense, the proof that we give below is not even different than the proofs given in \cite{Pan},\cite{PoppArg}, it is just a language translation of their proofs into an ultrafilter-free version.

\begin{proof}[Proof of Lemma \ref{L:PopaconjationA}]
Without loss of generality, we may assume that $\beta$ is a generating observable. We prove the lemma in two cases.

\emph{Case 1. When $G\actson (X,\mu)$ is free.}
We use the quasi-tiling machinery developed by Ornstein-Weiss. Let $\varepsilon>0$, then we may find an integer $L,$ measurable subsets $\widetilde{V}_{1},\dots,\widetilde{V}_{L}$ of $X$, and $(E,\varepsilon)$-invariant subsets $T_{1},\dots,T_{L}$ of $G$ so that
\begin{itemize}
\item $\{h\widetilde{V}_{r}\}_{h\in T_{r},1\leq r\leq L}$ is a disjoint family of sets, and
\item $\mu\left(\bigcup_{r=1}^{L}T_{r}\widetilde{V}_{r}\right)\geq 1-\varepsilon.$
\end{itemize}
By perturbing $\widetilde{V}_{r}$ slightly, we may assume that there is a finite $F_{0}\subseteq G$ so that $h\widetilde{V}_{r}$ is $\beta^{F_{0}}$-measurable for all $1\leq r\leq L$ and $h\in T_{r}.$ We may also assume that $F_{0}\supseteq \bigcup_{r=1}^{L}T_{r}^{-1}.$
  For $1\leq r\leq l$ and $h\in T_{r},$ find $\widetilde{B}_{r}\subseteq B^{F_{0}}$ so that $\widetilde{V}_{r}=(\beta^{F_{0}})^{-1}(\widetilde{B}_{r}).$
 Set $K=\{(h,r):h\in T_{r},1\leq r\leq L\},$ and let $\kappa>0$ be as in Lemma \ref{L:almost permutation lemma} for this $K,\varepsilon.$
Let $J_{i}\subseteq \{1,\dots,d_{i}\}$ be the set of $j$ so that
\begin{itemize}
\item for all $\alpha_{1},\alpha_{2}\in\{-1,1\},$ and every $h_{1},h_{2}\in \bigcup_{r=1}^{L}(E\cup \{e\}\cup E^{-1})(T_{r}\cup\{e\} \cup T_{r}^{-1})$ we have $\sigma_{i}(h_{1}^{\alpha_{1}}h_{2}^{\alpha_{2}})(j)=\sigma_{i}(h_{1})^{\alpha_{1}}\sigma_{i}(h_{2})^{\alpha_{2}}(j),$ and
\item for all $h_{1},h_{2}\in \bigcup_{r=1}^{L}(E\cup \{e\}\cup E^{-1})(T_{r}\cup\{e\} \cup T_{r}^{-1})$ with $h_{1}\ne h_{2},$ we have $\sigma_{i}(h_{1})(j)\ne \sigma_{i}(h_{2})(j).$
\end{itemize}
We then have that $u_{d_{i}}(J_{i})\to 1,$ so there is an $I_{0}\in \N$ so that $u_{d_{i}}(J_{i})\geq 1-\varepsilon$ for all $i\geq I_{0}.$

	We may choose a sufficiently small positive number $\delta\in (0,\kappa/2),$ a sufficiently large finite subset $F$ of $G,$  and a sufficiently large natural number $I_{1}$  so that for all $i\geq I_{1}$ and any $\phi\in \AP(\beta,F,\delta,\sigma_{i})$ we have
\begin{itemize}
\item $|u_{d_{i}}((\phi_{\sigma_{i}}^{F_{0}})^{-1}(\widetilde{B}_{r}))-\mu(\widetilde{V_{r}})|<\kappa/2\mbox{ for $r=1,\dots,L$},$
\item $u_{d_{i}}(\sigma_{i}(h)(\phi_{\sigma_{i}}^{F_{0}})^{-1}(\widetilde{B}_{r})\cap \sigma_{i}(k)(\phi_{\sigma_{i}}^{F_{0}})^{-1}(\widetilde{B}_{s}))<\kappa\mbox{ for all $(h,r),(k,s)\in K$ with $(h,r)\ne (k,s),$}$
\item $u_{d_{i}}\left(\bigcup_{r=1}^{L}\sigma_{i}(T_{r})(\phi_{\sigma_{i}}^{F_{0}})^{-1}(\widetilde{B}_{r})\right)\geq 1-2\varepsilon,$ and
\item $\delta\sum_{r=1}^{L}|T_{r}\setminus g^{-1}T_{r}|<\varepsilon$.
\end{itemize}
Set $I=\max(I_{0},I_{1}).$ Fix an $i\geq I$ and $\phi,\psi\in \AP(\beta,F,\delta,\sigma_{i}).$ For $1\leq r\leq L$, $h\in T_{r}\setminus\{e\},$ and $\widetilde{b}\in \widetilde{B}_{r},$ let
\[B_{e,r}^{\widetilde{b}}=(\phi_{\sigma_{i}}^{F_{0}})^{-1}(\{\widetilde{b}\}),C_{e,r}^{\widetilde{b}}=(\psi_{\sigma_{i}}^{F_{0}})^{-1}(\{\widetilde{b}\}),\]
\[B_{e,r}=(\phi_{\sigma_{i}}^{F_{0}})^{-1}(\widetilde{B}_{r}),C_{e,r}=(\psi_{\sigma_{i}}^{F_{0}})^{-1}(\widetilde{B}_{r}),\]
\[B_{h,r}=\sigma_{i}(h)B_{e,r},C_{h,r}=\sigma_{i}(h)C_{e,r}.\]

For every $1\leq r\leq L$ and every $\widetilde{b}\in \widetilde{B}_{r},$ choose a $p_{e,r}^{\widetilde{b}}\colon B_{e,r}^{\widetilde{b}}\to C_{e,r}^{\widetilde{b}}$ which is ``as bijective as possible." Namely, we require that
\[u_{d_{i}}(p_{e,r}^{\widetilde{b}}(B_{e,r}^{\widetilde{b}}))=\min(u_{d_{i}}(B_{e,r}^{\widetilde{b}}),u_{d_{i}}(C_{e,r}^{\widetilde{b}})).\]
Define $p_{e,r}\colon B_{e,r}\to C_{e,r}$ by saying that $p_{e,r}\big|_{B_{e,r}^{\widetilde{b}}}=p_{e,r}^{\widetilde{b}}$ for every $\widetilde{b}\in \widetilde{B}_{r}.$  Then
\begin{equation}\label{E:aldgjklajgkla equiv}
\phi_{\sigma_{i}}^{F_{0}}(j)=\psi_{\sigma_{i}}^{F_{0}}(p_{e,r}(j))\mbox{ for all $j\in B_{e,r}.$}
\end{equation}
Since $p_{e,r}(B_{e,r})=\bigcup_{\widetilde{b}\in B_{r}}p_{e,r}^{\widetilde{b}}(B_{e,r}^{\widetilde{b}}),$ we have that
\begin{align}\nonumber
u_{d_{i}}(p_{e,r}(B_{e,r}))=\sum_{\widetilde{b}\in \widetilde{B}_{r}}\min(u_{d_{i}}(B_{e,r}^{\widetilde{b}}),u_{d_{i}}(C_{e,r}^{\widetilde{b}}))&\geq \sum_{\widetilde{b}\in \widetilde{B}_{r}}u_{d_{i}}(C_{e,r}^{\widetilde{b}})-\sum_{\widetilde{b}\in \widetilde{B}_{r}}|u_{d_{i}}(B_{e,r}^{\widetilde{b}})-u_{d_{i}}(C_{e,r}^{\widetilde{b}})|\\  \nonumber
&=u_{d_{i}}(C_{e,r})-\sum_{\widetilde{b}\in\widetilde{B}_{r}}|(\phi^{F_{0}}_{\sigma_{i}})_{*}(u_{d_{i}})(\{\widetilde{b}\})-(\psi^{F_{0}}_{\sigma_{i}})_{*}(u_{d_{i}})(\{\widetilde{b}\})|\\ \nonumber
&\geq u_{d_{i}}(C_{e,r})-\|(\psi^{F_{0}}_{\sigma_{i}})_{*}(u_{d_{i}})-(\phi^{F_{0}}_{\sigma_{i}})_{*}(u_{d_{i}})\|\\ \nonumber
&\geq u_{d_{i}}(C_{e,r})-2\delta\\ \label{I: its the right size yo!}
&\geq u_{d_{i}}(C_{e,r})-\kappa.
\end{align}

For $1\leq r\leq L$ and $h\in T_{r}\setminus\{e\},$ define $p_{h,r}\colon B_{h,r}\to C_{h,r}$ by
\[p_{h,r}(\sigma_{i}(h)(j))=\sigma_{i}(h)p_{e,r}(j)\mbox{ for all $j\in B_{e,r}$.}\]

By  $(\ref{I: its the right size yo!}),$ and our choice of $F,\delta,I_{1},$  the hypotheses of Lemma \ref{L:almost permutation lemma} apply, so we can find a $p\in \Sym(d_{i})$  as in the conclusion of Lemma \ref{L:almost permutation lemma} for the family of functions $(p_{h,r})_{(h,r)\in K}.$ Let
\[J_{i}^{o}=J_{i}\cap p^{-1}(J_{i})\cap\left[\left(\bigcup_{r=1}^{L}\sigma_{i}(T_{r})(B_{e,r})\right)\setminus \left(\bigcup_{r=1}^{L}\bigcup_{h\in T_{r}}\{j\in B_{h,r}:p(j)\ne p_{h,r}(j)\}\right)\right]
.\]
By our choice of $p,$
\begin{align*}u_{d_{i}}((J_{i}^{0})^{c})&\leq 2(1-u_{d_{i}}(J_{i}))+u_{d_{i}}\left(\bigcup_{r=1}^{L}\bigcup_{h\in T_{r}}\{j\in B_{h,r}:p(j)\ne p_{h,r}(j)\}\right)+u_{d_{i}}\left(\left(\bigcup_{r=1}^{L}\sigma_{i}(T_{r})B_{e,r}\right)^{c}\right)\\
&\leq 2(1-u_{d_{i}}(J_{i}))+\varepsilon+1-u_{d_{i}}\left(\bigcup_{r=1}^{L}\sigma_{i}(T_{r})B_{e,r}\right)\\
&\leq 2(1-u_{d_{i}}(J_{i}))+3\varepsilon.
\end{align*}
Since $i\geq I_{0},$ we have $u_{d_{i}}((J_{i}^{0})^{c})\leq 5\varepsilon.$ So
\begin{align}\nonumber
u_{d_{i}}(\{j:\psi(p(j))\ne \phi(j)\})&\leq 5\varepsilon+\sum_{r=1}^{L}\sum_{h\in T_{r}}u_{d_{i}}(\{j\in B_{e,r}\cap J_{i}^{o}:\psi(\sigma_{i}(h)p_{e,r}(j))=\phi(\sigma_{i}(h)(j))\})\\ \nonumber
&\leq  5\varepsilon+\sum_{r=1}^{L}\sum_{h\in T_{r}}u_{d_{i}}(\{j\in B_{e,r}\cap J_{i}^{o}:\psi_{\sigma_{i}}^{F_{0}}(p_{e,r}(j))(h^{-1})=\phi_{\sigma_{i}}^{F_{0}}(j)(h^{-1})\})\\\label{E: conjugation estimatesljgrla}
&=5\varepsilon,
\end{align}
the last line following by (\ref{E:aldgjklajgkla equiv}).

Fix a $g\in E.$ We have to estimate $u_{d_{i}}(\{j:p(\sigma_{i}(g)(j))\ne\sigma_{i}(g)(p(j))\}).$
We have that
\begin{align*}
&u_{d_{i}}(\{j:p(\sigma_{i}(g)(j))\ne\sigma_{i}(g)(p(j))\})\leq 5\varepsilon+u_{d_{i}}(\{j\in J_{i}^{0}:p(\sigma_{i}(g)(j))\ne \sigma_{i}(g)p(j)\})\leq\\
& 5\varepsilon+ \sum_{r=1}^{L}\sum_{h\in T_{r}\cap g^{-1}T_{r}}u_{d_{i}}(\{j\in B_{e,r}\cap J_{i}^{o}:p(\sigma_{i}(g)\sigma_{i}(h)(j))=\sigma_{i}(g)p_{h,r}(\sigma_{i}(h)(j))\})\\
&+\sum_{r=1}^{L}|T_{r}\setminus g^{-1}T_{r}|u_{d_{i}}(B_{e,r})\leq \\
&5\varepsilon+\sum_{r=1}^{L}\sum_{h\in T_{r}\cap g^{-1}T_{r}}u_{d_{i}}(\{j\in B_{e,r}\cap J_{i}^{o}:p(\sigma_{i}(g)\sigma_{i}(h)(j))=\sigma_{i}(g)p_{h,r}(\sigma_{i}(h)(j))\})\\
&+\delta\sum_{r=1}^{L}|T_{r}\setminus g^{-1}T_{r}|+\sum_{r=1}^{L}|T_{r}\setminus g^{-1}T_{r}|\mu(\widetilde{V}_{r}).\\
\end{align*}
For every $1\leq r\leq L,h\in T_{r}\cap g^{-1}T_{r},$ and every $j\in J_{i},$ we have $\sigma_{i}(g)\sigma_{i}(h)(j)=\sigma_{i}(gh)(j).$ So  for all $1\leq r\leq L,$ $h\in T_{r}\cap g^{-1}T_{r},$ $j\in J_{i}^{0}\cap B_{e,r},$
\begin{align*}
p(\sigma_{i}(g)\sigma_{i}(h)(j))=p(\sigma_{i}(gh)(j))=p_{gh,r}(\sigma_{i}(gh)(j))=\sigma_{i}(gh)p_{e,r}(j)&=\sigma_{i}(g)\sigma_{i}(h)p_{e,r}(j)\\
&=\sigma_{i}(g)p_{h,r}(\sigma_{i}(h)(j)).
\end{align*}
Additionally, our choice of $\delta$ implies that $\delta\sum_{r=1}^{L}|T_{r}\setminus g^{-1}T_{r}|<\varepsilon.$ Hence,
\begin{align*}
u_{d_{i}}(\{j:p(\sigma_{i}(g)(j))\ne \sigma_{i}(g)(p(j))\})&\leq 6\varepsilon+\sum_{r=1}^{L}|T_{r}\setminus g^{-1}T_{r}|\mu(\widetilde{V}_{r})\\
&\leq 6\varepsilon+\varepsilon\sum_{r=1}^{L}|T_{r}|\mu(\widetilde{V}_{r})\\
&\leq 6\varepsilon+\varepsilon\mu\left(\bigcup_{r=1}^{L}T_{r}\widetilde{V}_{r}\right)\\
&\leq 7\varepsilon,
\end{align*}
the third to last inequality follows because $T_{r}$ is $(E,\varepsilon)$-invariant, and the second to last inequality follows because $\{gV_{r}\}_{1\leq r\leq L,g\in T_{r}}$ is a disjoint family of sets. Thus,
\[u_{d_{i}}(\{j:p(\sigma_{i}(g)(j))\ne\sigma_{i}(g)(p(j))\})\leq 7\varepsilon.\]
Since $\varepsilon$ is arbitrary, the above estimate and (\ref{E: conjugation estimatesljgrla}) complete the proof of Case 1.

\emph{Case 2. The case of general $G\actson (X,\mu)$.}
In this case, consider the diagonal action $G\actson (X\times \{0,1\}^{G},\mu\otimes u_{\{0,1\}}^{\otimes G}),$ where $G\actson (\{0,1\},u_{\{0,1\}})^{G}$ is the Bernoulli action.   Let $\widetilde{\beta}\colon X\times \{0,1\}^{G} \to B\times \{0,1\}$ be defined by $\widetilde{\beta}(x,y)=(\beta(x),y(e)).$ Since the action $G\actson (X\times \{0,1\}^{G},\mu\otimes u_{\{0,1\}}^{\otimes G})$ is free, by Case 1 we may choose a finite $\widetilde{F}\subseteq G,$ a $\widetilde{\delta}>0,$ and an $\widetilde{I}\in \N$ so that if $i\geq \widetilde{I}$ and
$\phi,\psi\in \AP(\widetilde{\beta},\widetilde{F},\widetilde{\delta},\sigma_{i}),$ then there is a $p\in S_{d_{i}}$ so that
\[\max_{g\in E} u_{d_{i}}(\{j:p(\sigma_{i}(g)(j))\ne \sigma_{i}(g)(p(j))\})\leq \varepsilon \textnormal{ and } u_{d_{i}}(\{j:\phi(p(j))\ne \psi(p(j))\})\leq \varepsilon.\]

By the proof of Theorem 8.1 of \cite{Bow}, we may choose a finite $F\subseteq G,$ a $\delta>0,$ and an $I\in \N,$  so that if $i\geq I$ and $\psi\in \AP(\beta,F,\delta,\sigma_{i}),$ then there is a
$\widetilde{\psi}\in \AP(\widetilde{\beta},\widetilde{F},\widetilde{\delta},\sigma_{i})$ with $\rho_{A}\circ \widetilde{\psi}=\psi.$
Suppose that $i\geq I$ and that $\psi,\phi\in \AP(\beta,F,\delta,\sigma_{i}).$ Choose $\widetilde{\psi},\widetilde{\phi}\in \AP(\widetilde{\beta},\widetilde{F},\widetilde{\delta},\sigma_{i})$ with $\rho_{A}\circ \widetilde{\phi}=\phi,\rho_{A}\circ\widetilde{\psi}=\widetilde{\psi}.$ By Case 1, we may choose a $p\in S_{d_{i}}$ so that
\[\max_{g\in E} u_{d_{i}}(\{j:p(\sigma_{i}(g)(j))\ne \sigma_{i}(g)(p(j))\})\leq \varepsilon \textnormal{ and } u_{d_{i}}(\{j:\widetilde{\psi}(p(j))\ne \widetilde{\phi}(j)\})\leq \varepsilon.\]
We then have that
\[u_{d_{i}}(\{j:\psi(p(j))\ne \phi(j)\})\leq u_{d_{i}}(\{j:\widetilde{\psi}(p(j))\ne \widetilde{\psi}(j)\})\leq \varepsilon,\]
so this completes the proof.
\end{proof}

Lemma \ref{L:PopaconjationA} automatically tells us that the quantity $\AP(\alpha|\psi,\cdots)$  ``asymptotically does not depend upon $\psi$'' in the case the acting group is amenable. Precisely, we have the following lemma. For the proof, we use the following notation: if $A$ is a finite set, $\delta>0,$ $d\in \N,$ and $\Omega,\Omega'\subseteq A^{d},$ then we write $\Omega\subseteq_{\delta}\Omega'$ if for every $\phi\in \Omega$ there is a $\phi'\in \Omega'$ so that $u_{d}(\{j:\phi(j)\ne \phi'(j)\})\leq \delta.$ This is clearly the same as $\delta$-containment as defined in \cite[Section 3]{Me12} with respect to the metric on $A^{d}$ given by $\rho(\phi,\psi)=u_{d}(\{j:\phi(j)\ne \psi(j)\}).$

\begin{lem}\label{L:asympindependenceAPPa}
Let $\alpha,\beta,\gamma$ be finite measurable observables with $\gamma\geq\alpha\vee \beta.$ Then for any finite $F\subseteq G$ and $\delta\in (0,\frac{1}{2}),$ there exists a finite $F'\subseteq G$ and a $\delta'>0$ so that if $\psi,\psi'\in \AP(\beta:\gamma,F',\delta',\sigma_{i}),$ then
\[|\AP(\alpha|\psi':\gamma,F',\delta',\sigma_{i})|\leq |\AP(\alpha|\psi:\gamma,F,\delta,\sigma_{i})||A|^{\delta d_{i}}\delta d_{i}\binom{d_{i}}{\lfloor{\delta d_{i}\rfloor}}.\]

\end{lem}

\begin{proof}
Let $A,B,C$ be the codomains of $\alpha,\beta,\gamma$ respectively. Choose a $\kappa\in (0,\delta)$ so that for all sufficiently large $i,$ and all $\psi,\psi'\in B^{d_{i}}$ with $u_{d_{i}}(\{j:\psi(j)\ne \psi'(j)\})\leq \kappa$ and $\psi\in \AP(\beta:\gamma,F,\delta/2,\sigma_{i}),$ we have $\psi'\in \AP(\beta:\gamma,F,\delta,\sigma_{i})$ and
	\[\AP(\alpha|\psi:\gamma,F,\delta/2,\sigma_{i})\subseteq_{\kappa} \AP(\alpha|\psi':\gamma,F,\delta,\sigma_{i}).\]
We may choose an $\varepsilon\in (0,\kappa)$ so that if $p\in S_{d_{i}}$ and
\[\max_{g\in F}u_{d_{i}}(\{j:p(\sigma_{i}(g)(j))\ne \sigma_{i}(g)(p(j))\})\leq \varepsilon,\]
then
\[\AP(\gamma,F,\delta/4,\sigma_{i})\circ p\subseteq \AP(\gamma,F,\delta/2,\sigma_{i}).\]
By Lemma \ref{L:PopaconjationA}, we may choose a finite $F'\subseteq G$ with $F'\supseteq F,$ and a $\delta'\in (0,\delta/4)$ so that for all sufficiently large $i$ and all $\psi,\psi'\in \AP(\beta,F',\delta',\sigma_{i}),$ there is a $p\in S_{d_{i}}$ with
\[u_{d_{i}}(\{j:\psi'(p(j))\ne \psi(j)\})\leq \varepsilon\textnormal{ and }\max_{g\in F}u_{d_{i}}(\{j:p(\sigma_{i}(g)(j))\ne \sigma_{i}(g)(p(j))\})\leq \varepsilon.\]
For all sufficiently large $i$ and all $\psi,\psi'\in \AP(\beta:\gamma ,F',\delta',\sigma_{i})$ we may find a $p$ as in Lemma \ref{L:PopaconjationA}. For such a $p$ we have
\[\AP(\alpha|\psi':\gamma,F',\delta',\sigma_{i})\circ p\subseteq \AP(\alpha|\psi'\circ p:\gamma,F,\delta/2,\sigma_{i})\subseteq_{\kappa} \AP(\alpha|\psi:\gamma ,F,\delta,\sigma_{i}).\]
A fortiori,
\[\AP(\alpha|\psi':\gamma,F',\delta',\sigma_{i})\circ p\subseteq_{\delta}  \AP(\alpha|\psi:\gamma ,F,\delta,\sigma_{i}).\]
Since $\delta<1/2$, for a fixed $\phi\in A^{d_{i}}$ we have that $|\{\phi'\in A^{d_{i}}:u_{d_{i}}(\{j:\phi(j)\ne \phi'(j)\})\leq \delta\}|\leq |A|^{d_{i}}\delta d_{i}\binom{d_{i}}{\lfloor{\delta d_{i}\rfloor}}.$ Thus,
\[|\AP(\alpha|\psi':\gamma,F',\delta',\sigma_{i})|=|\AP(\alpha|\psi':\gamma,F',\delta',\sigma_{i})\circ  p|\leq |A|^{d_{i}}\delta d_{i}\binom{d_{i}}{\lfloor{\delta d_{i}\rfloor}}|\AP(\alpha|\psi:\gamma ,F,\delta,\sigma_{i})|.\]

\end{proof}

In order to relate upper relative entropy to relative entropy for amenable groups it turns out to be helpful to write down an equivalent expression for relative entropy.

\begin{defn}\label{D:microstatesrelentA}
 Let $\beta\colon X\to B$ be a finite measurable observable.
We say that a sequence $\psi_{i}\colon\{1,\dots,d_{i}\}\to B$ is a \emph{sequence of $\beta$-microstates} if for every finite $F\subseteq G$ and every $\delta>0$ we have $\psi_{i,\sigma_{i}}^{F}\in \AP(\beta,F,\delta,\sigma_{i})$ for all large $i.$ Suppose that $\alpha,\gamma$ are finite measurable observables with $\gamma\geq \alpha\vee \beta.$  Given a sequence $(\psi_{i})_{i}$ of  $\beta$-microstates, a finite $F\subseteq G,$ and a $\delta>0,$ we set
\[h_{(\sigma_{i})_{i},\mu}(\alpha|(\psi_{i})_{i}:\gamma,F,\delta,\sigma_{i})=\limsup_{i\to\infty}\frac{1}{d_{i}}\log |\AP(\alpha|\psi_{i}:\gamma,F,\delta,\sigma_{i})|,\]
\[h_{(\sigma_{i})_{i},\mu}(\alpha|(\psi_{i})_{i}:\gamma,G)=\inf_{\substack{F\subseteq G \textnormal{finite},\\ \delta>0}}h_{(\sigma_{i})_{i},\mu}(\alpha|(\psi_{i})_{i}:\gamma,F,\delta,\sigma_{i}).\]

\end{defn}

In order to show that the above expressions agree with relative entropy with respect to $(\sigma_{i})_{i}$ as we previously defined we need the following proposition.

\begin{prop}\label{C:amenableMLPaslhg}
Let $\beta,\gamma$ be finite measurable observables with $\beta\leq \gamma.$ Then for every finite $F\subseteq G$ and $\delta>0$ there exists a finite $F'\subseteq G,$  a $\delta'>0,$ and an $I\in \N$ so that
\[\AP(\beta,F',\delta',\sigma_{i})\subseteq \AP(\beta:\gamma,F,\delta,\sigma_{i})\]
for all $i\geq I.$

\end{prop}

\begin{proof} Let $B,C$ be the codomains of $\beta,\gamma.$ Let $\rho\colon C\to B$ be such that $\rho\circ \gamma=\beta$ almost everywhere. Fix a finite $F\subseteq G$ and a $\delta>0.$ We may choose a $\kappa>0$ so that for all $i$ and all $\phi,\psi\in C^{d_{i}}$ with
\[u_{d_{i}}(\{j:\phi(j)\ne \psi(j)\})\leq \kappa, \textnormal{ and } \psi\in \AP(\gamma, F,\delta/2,\sigma_{i}),\]
we have $\phi\in\AP(\gamma,F,\delta,\sigma_{i}).$ We may choose an $\varepsilon\in (0,\kappa)$ so that if $i\in\N,$ and if $p\in S_{d_{i}}$ has
\[\max_{g\in F}u_{d_{i}}(\{j:p(\sigma_{i}(g)(j))\ne \sigma_{i}(g)p(j)\})\leq \varepsilon,\]
then
\[\AP(\gamma,F,\delta/4,\sigma_{i})\circ p\subseteq \AP(\gamma,F,\delta/2,\sigma_{i}).\]

By Lemma \ref{L:PopaconjationA}, we may choose a finite $F'\subseteq G,$ a $\delta'>0,$ and an $I_{0}\in \N$ so that if $i\geq I_{0},$ then for all $\phi,\psi\in\AP(\beta,F',\delta',\sigma_{i})$ there is a $p\in S_{d_{i}}$ with
\[u_{d_{k}}(\{j:\psi(p(j))\ne \phi(j)\})\leq \varepsilon \textnormal{ and  } \max_{g\in F}u_{d_{i}}(j:p(\sigma_{i}(g)(j))\ne \sigma_{i}(g)(p(j))\})\leq \varepsilon.\]
We may, and will, assume that $F\subseteq F'$ and that $\delta'<\delta/4.$

Since $G$ is amenable, it follows from Theorems 1 and 4 of \cite{ElekLip} (see also Theorem 1 of \cite{BowenAmen}, Theorem 6.7 of \cite{KLi2})  that we may find an $I_{1}\in \N$ so that if $i\geq I_{1},$  then there is a $\psi\in \AP(\gamma,F',\delta',\sigma_{i}).$ Fix an $i\geq \max(I_{0},I_{1})$, a $\psi\in \AP(\gamma,F',\delta,',\sigma_{i}),$ and  a $\phi\in\AP(\beta,F',\delta',\sigma_{i}).$ Since $\rho\circ \psi\in \AP(\beta,F',\delta',\sigma_{i})$, we may find a $p\in S_{d_{i}}$ so that
\[u_{d_{i}}(\{j:(\rho\circ \psi)(p(j))\ne \phi(j)\})\leq \varepsilon.\]
Let $J=\{j:(\rho\circ \psi)(p(j))\ne \phi(j)\}.$ Let $\widetilde{\phi}\in C^{d_{i}}$ be any function satisfying the following two conditions:
\begin{itemize}
\item $\widetilde{\phi}\big|_{J^{c}}=(\psi\circ p)\big|_{J^{c}}$,
\item $(\rho\circ \widetilde{\phi})\big|_{J}=\phi\big|_{J}$.
\end{itemize}
By our choice of $\delta',F',\varepsilon,$ we have that $\psi\circ p\in \AP(\gamma,F,\delta/2,\sigma_{i}).$ Since
\[u_{d_{i}}(\{j:\widetilde{\phi}(j)\ne \psi(p(j))\})=u_{d_{i}}(J)\leq \varepsilon\leq \kappa,\]
it follows from our choice of $\kappa$ that $\widetilde{\phi}\in \AP(\gamma,F,\delta,\sigma_{i}).$ Since $\rho\circ \widetilde{\phi}=\phi$ by construction,  we have shown that
\[\AP(\beta,F',\delta',\sigma_{i})\subseteq \AP(\beta:\gamma,F,\delta,\sigma_{i}).\]

\end{proof}

The above lemma shows that any factor map between actions of an amenable group is \emph{model-surjective} in the sense of \cite[Definition 3.1]{AustinGeo}. Combining this with Proposition 8.4 of \cite{AustinAdd} gives an alternate proof of the fact that measure-preserving actions of amenable groups are strongly sofic with respect to any sofic approximation.

\begin{lem}\label{L:infsupdontmatter}
Let $\beta,\alpha,\gamma$ be  finite measurable observables with domain $X$ and with $\alpha\vee \beta\leq \gamma$. If $\psi_{i}$ is any sequence of $\beta$-microstates, then
\[h_{(\sigma_{i})_{i},\mu}(\alpha|\beta:\gamma)=h_{(\sigma_{i})_{i},\mu}(\alpha|(\psi_{i})_{i}:\gamma,G).\]
\end{lem}

\begin{proof}
Fix a finite $F\subseteq G$ and a $\delta>0.$ If $i$ is sufficiently large, then by Proposition \ref{C:amenableMLPaslhg} we have $\psi_{i}\in \AP(\beta:\gamma,F,\delta,\sigma_{i}).$ Thus,
\[h_{(\sigma_{i})_{i},\mu}(\alpha|\beta:\gamma)\geq h_{(\sigma_{i})_{i},\mu}(\alpha|(\psi_{i})_{i}:\gamma,G).\]

	We now  prove the reverse inequality. Fix a finite $F\subseteq G$ and a $\delta>0.$ Choose $F',\delta'$ as in Lemma \ref{L:asympindependenceAPPa} for this $F,\delta.$
Then for all large $i,$
\[\sup_{\phi\in \AP(\alpha:\gamma,F',\delta',\sigma_{i})}|\AP(\alpha|\phi:\gamma,F',\delta',\sigma_{i})|\leq |\AP(\alpha|\psi_{i}:\gamma,F,\delta,\sigma_{i})||A|^{\delta d_{i}}\delta d_{i}\binom{d_{i}}{\lfloor{\delta d_{i}\rfloor}}.\]
By Stirling's Formula,
\begin{align*}
h_{(\sigma_{i})_{i},\mu}(\alpha|\beta:\gamma)&\leq h_{(\sigma_{i})_{i},\mu}(\alpha|\beta:\gamma,F',\delta',\sigma_{i})\\
&\leq h_{(\sigma_{i})_{i},\mu}(\alpha|(\psi_{i})_{i}:\gamma,F,\delta)-\delta \log(\delta)-(1-\delta)\log(1-\delta)+\delta \log|A|.
\end{align*}
Letting $\delta\to 0$ and then taking the infimum over all finite $F\subseteq G$ completes the proof.

\end{proof}

It will also be helpful to note that we may replace the limit supremum in the definition of relative entropy with a limit infimum.

\begin{defn}\label{D:microstatesrelentB}
Let $\alpha,\gamma$ be finite measurable observables with $\gamma\geq \alpha.$  Given a finite $F\subseteq G$ and a $\delta>0,$ we set
\[\underline{h}_{(\sigma_{i})_{i},\mu}(\alpha:\gamma,F,\delta,\sigma_{i})=\liminf_{i\to\infty}\frac{1}{d_{i}}\log |\AP(\alpha:\gamma,F,\delta,\sigma_{i})|,\]
\[\underline{h}_{(\sigma_{i})_{i},\mu}(\alpha:\gamma)=\inf_{\substack{F\subseteq G \textnormal{finite},\\ \delta>0}}\underline{h}_{(\sigma_{i})_{i},\mu}(\alpha:\gamma,F,\delta,\sigma_{i}).\]

\end{defn}

The following is more or less a consequence of results of Bowen \cite{BowenAmen} and Kerr-Li \cite{KLi2}

\begin{lem}\label{L:liminflimsupdon'tmatter}
Let $\alpha,\gamma$ be finite measurable observables with domain $X$ and so that $\alpha\leq \gamma.$ Then
\[\underline{h}_{(\sigma_{i})_{i},\mu}(\alpha:\gamma)=h_{\mu}(\alpha,G)=h_{(\sigma_{i})_{i},\mu}(\alpha:\gamma).\]

\end{lem}

\begin{proof}  It is implicitly shown in \cite{BowenAmen},\cite{KLi2} that
\[h_{(\sigma_{i})_{i},\mu}(\alpha:\alpha)=h_{\mu}(\alpha,G)=\underline{h}_{(\sigma_{i})_{i},\mu}(\alpha:\alpha),\]
Hence,
\[h_{\mu}(\alpha,G)=h_{(\sigma_{i})_{i}}(\alpha:\alpha)\geq h_{(\sigma_{i})_{i}}(\alpha:\gamma)\geq \underline{h}_{(\sigma_{i})_{i}}(\alpha:\gamma).\]
It thus suffices to show that $h_{\mu}(\alpha,G)\leq \underline{h}_{(\sigma_{i})_{i}}(\alpha:\gamma).$ To show this, fix a finite $F\subseteq G$ and a $\delta>0.$ By Proposition \ref{C:amenableMLPaslhg}, we may find a finite $F'\subseteq G$ and a $\delta'>0$ so that
\[\AP(\alpha:\alpha,F',\delta',\sigma_{i})\subseteq \AP(\alpha:\gamma,F,\delta,\sigma_{i}).\]
It follows that
\[h_{\mu}(\alpha,G)=\underline{h}_{(\sigma_{i})_{i},\mu}(\alpha:\alpha)\leq \underline{h}_{(\sigma_{i})_{i},\mu}(\alpha:\alpha,F',\delta')\leq \underline{h}_{(\sigma_{i})_{i},\mu}(\alpha:\gamma,F,\delta).\]
And taking the infimum over $F,\delta$ completes the proof.

\end{proof}

\begin{thm}\label{T:Appendixagreementfinitecase}
Fix finite measurable observables $\beta,\alpha,\gamma$ with $\gamma\geq \alpha\vee \beta.$ Let $S_{\beta}$ be the smallest complete, $G$-invariant sub-sigma-algebra of $\mathcal{X}$ which makes $\beta$ measurable. Then
\[h_{(\sigma_{i})_{i},\mu}(\alpha|\beta:\gamma)=h_{\mu}(\alpha|S_{\beta},G).\]
\end{thm}

\begin{proof}
 By Proposition \ref{P:basicpropertieslakdhgkl} (\ref{I:bound2alkghal}) and Lemma \ref{L:liminflimsupdon'tmatter},
\begin{align*}
h_{\mu}(\alpha\vee \beta,G)=h_{(\sigma_{i})_{i},\mu}(\alpha\vee \beta:\gamma)&\leq h_{(\sigma_{i})_{i},\mu}(\beta:\gamma)+h_{(\sigma_{i})_{i},\mu}(\alpha|\beta:\gamma)\\
&= h_{\mu}(\beta,G)+h_{(\sigma_{i})_{i},\mu}(\alpha|\beta:\gamma).
\end{align*}
Using the Abramov-Rokhlin formula $h_{\mu}(\alpha|S_{\beta},G)=h_{\mu}(\alpha\vee \beta,G)-h_{\mu}(\beta,G)$ (see \cite{AbramovRohklin,AbramovRokhlin2,AbramovRohklin3,Danilenko}),    we find that
\[h_{\mu}(\alpha|S_{\beta},G)\leq h_{(\sigma_{i})_{i},\mu}(\alpha|\beta:\gamma).\]

To prove the reverse inequality, choose a sequence $(\psi_{i})_{i}$ of $\beta$-microstates. Fix a finite $F\subseteq G$ and a $\delta>0,$ and let $F'\subseteq G$ and $\delta'>0$ be as in the conclusion to Lemma \ref{L:asympindependenceAPPa}.
For all sufficiently large $i,$
\begin{align*}
|\AP(\alpha\vee \beta :\gamma,F,\delta,\sigma_{i})|&=\sum_{\phi\in \AP(\beta:\gamma,F,\delta,\sigma_{i})}|\AP(\alpha|\phi,F,\delta,\sigma_{i})|\\
&\geq \sum_{\phi\in \AP(\beta:\gamma,F',\delta',\sigma_{i})}|\AP(\alpha|\phi,F,\delta,\sigma_{i})|\\
&\geq |\AP(\beta:\gamma,F',\delta',\sigma_{i})||\AP(\alpha|\psi_{i}:F',\delta',\sigma_{i})||A|^{-\delta d_{i}}\frac{1}{\delta d_{i}\binom{d_{i}}{\lfloor{\delta d_{i}\rfloor}}}.
\end{align*}
So  by Stirling's formula,
\begin{align*}
h_{(\sigma_{i})_{i},\mu}(\alpha\vee \beta:\gamma,F,\delta)&\geq h_{(\sigma_{i})_{i},\mu}(\alpha|(\psi_{i})_{i}:F',\delta')+\underline{h}_{(\sigma_{i})_{i},\mu}(\beta:\gamma,F',\delta')\\
&-\delta\log|A|+\delta \log(\delta)+(1-\delta)\log(1-\delta)\\
&\geq h_{(\sigma_{i})_{i},\mu}(\alpha|\beta:\gamma)+h_{\mu}(\beta,G)-\delta\log|A|+\delta \log(\delta)+(1-\delta)\log(1-\delta),
\end{align*}
where in the last line we use Lemmas \ref{L:liminflimsupdon'tmatter} and \ref{L:infsupdontmatter}. Taking the infimum over all $F,\delta$ and arguing as in the first half we see that
\[h_{(\sigma_{i})_{i},\mu}(\alpha|\beta:\gamma)\leq h_{\mu}(\alpha|S_{\beta},G).\]

\end{proof}

\begin{cor}
For any $G$-invariant sigma algebras $\mathcal{F}_{1},\mathcal{F}_{2}\subseteq \mathcal{X}$ we have
\[h_{\mu}(\mathcal{F}_{1}|\mathcal{F}_{2},G)=h_{(\sigma_{i})_{i},\mu}(\mathcal{F}_{1}|\mathcal{F}_{2}:\mathcal{X}).\]
\end{cor}

\begin{proof} For a finite $\mathcal{F}_{2}$-measurable observable $\beta,$ let $S_{\beta}$ be defined as in Theorem \ref{T:Appendixagreementfinitecase}. We have that
\[h_{\mu}(\mathcal{F}_{1}|\mathcal{F}_{2},G)=\sup_{\alpha}\inf_{\beta}h_{\mu}(\alpha|S_{\beta},G),\]
where the supremum is over all finite $\mathcal{F}_{1}$-measurable observables $\alpha,$ and the infimum is over all finite $\mathcal{F}_{2}$-measurable observables $\beta.$ A similar formula also holds for $h_{(\sigma_{i})_{i},\mu}(\mathcal{F}_{1}|\mathcal{F}_{2}:\mathcal{X}),$ so the general case follows from Theorem \ref{T:Appendixagreementfinitecase}.

\end{proof}

%
%

\end{document}